\def\widebreve#1{\mathop{\vbox{\m@th\ialign{##\crcr\noalign{\kern3\p@}%
				\brevefill\crcr\noalign{\kern3\p@\nointerlineskip}%
				$\hfil\displaystyle{#1}\hfil$\crcr}}}\limits}
\def\brevefill{$\m@th \setbox\z@\hbox{$\braceld$}%
	\bracelu\leaders\vrule \@height\ht\z@ \@depth\z@\hfill\braceru$}
\def\@citecolor{blue}
\def\@linkcolor{blue}
\def\@urlcolor{blue}
\def\@urlcolor{blue}
\def\NZQ{\mathbb}               
\def\ZZ{{\NZQ Z}}
\def\mfp{\mathfrak p}
\def\pol{\operatorname{pol}}
\def\Ass{\operatorname{Ass}}
\def\MinAss{\operatorname{MinAss}}
\def\height{\operatorname{height}}
\def\depth{\operatorname{depth}}
\def\deg{\operatorname{deg}}
\def\reg{\operatorname{reg}}
\def\gcd{\operatorname{gcd}}
\def\lcm{\operatorname{lcm}}
\def\v{\operatorname{v}}
\def\supp{\operatorname{supp}}
\def\np{\operatorname{NP}}
\def\conv{\operatorname{conv}}
\def\gr{\operatorname{gr}}
\newtheorem{Theorem}{Theorem}[section]
\newtheorem{Lemma}[Theorem]{Lemma}
\newtheorem{Corollary}[Theorem]{Corollary}
\newtheorem{Proposition}[Theorem]{Proposition}
\newtheorem{Remark}[Theorem]{Remark}
\newtheorem{Example}[Theorem]{Example}
\let\epsilon\varepsilon
\let\phi=\varphi
\let\kappa=\varkappa
\def \mod {\operatorname{mod}}
\begin{document}
	\title{$\v$-numbers of integral closure filtrations of monomial ideals}
	
	\author{Vanmathi A}
	\address{Vanmathi A, 
		Department of Mathematics,
		Indian Institute of Technology, Palakkad, India}
	\email{vanmathianandarajan@gmail.com, 212404007@smail.iitpkd.ac.in}
	
	\author{Parangama Sarkar}
	\thanks{The second author was partially supported by SERB POWER Grant with Grant No. SPG/2021/002423.}
	\address{Parangama Sarkar,  Department of Mathematics,
		Indian Institute of Technology, Palakkad, India}
	\email{parangamasarkar@gmail.com, parangama@iitpkd.ac.in}
	\keywords{complete intersection ideal, integral closure, Newton polyhedron, Castelnuovo-Mumford regularity, symbolic power}
	\subjclass[2000]{primary 13A15, 13A18, 13A02}
	\begin{abstract}
		In this article, we investigate the $\v$-numbers of powers of monomial ideals and their integral closures in a polynomial ring $S$. We provide an alternative proof for determining the $\v$-numbers of powers of complete intersection monomial ideals. Furthermore, we analyze the  $\v$-numbers associated to integral closure filtrations of irreducible monomial ideals and explore their relationship with the Castelnuovo–Mumford regularity of these ideals. Consequently, we obtain that for all $n\geq 1$,       $\reg(S/\overline{I^n})=\v(\overline{I^n})=n\alpha(I)-1$ where $I$ is an equigenerated irreducible monomial ideal. We also show that, for any integer $p\geq 3$, there exists a squarefree monomial ideal $I$ (not necessarily equigenerated) in a polynomial ring $R$ such that $\big(\dim R/I-\depth R/I\big)-\v(I)=p-2$ which provides a negative answer to the question \cite[Question 5.5]{SS}. Finally, we give an upper bound for $\v$-numbers associated to the integral closure filtrations of complete intersection monomial ideals and explicitly compute these $\v$-numbers in certain cases. As a consequence, we show that for any integer $a\geq 1$, there exists a height two equigenerated complete intersection monomial ideal $I$ such that $\reg(S/\overline{I^n})-\v(\overline{I^n})=a-1$ for all $n\geq 1$. Moreover, we establish that for complete intersection monomial ideals, the  $\v$-numbers of powers of ideals can be arbitrarily larger than the  $\v$-numbers of integral closures of their powers.
	\end{abstract}	
	\maketitle
	\section{introduction}
	Let $S=K[x_1,\ldots,x_m]$ be a polynomial ring over a field $K$ and $I$ be a homogeneous ideal in $S$. For any $\mfp\in\Ass(I)$, there exists a homogeneous element $f\in S$ such that $\mfp=(I:f)$ \cite[Lemma 1.5.6]{BH}. In \cite{CSTPV}, Cooper {\it et al.}, defined a new invariant associated to $I$, known as $\v$-number of $I$, given by $$\v(I): =\min\{u: \mbox{there exists } f\in S_u \mbox{ and }\mfp\in \Ass(I)\mbox{ such that }\mfp=(I:f)\}.$$ This invariant is named in honor of the mathematician Wolmer Vasconcelos and was first introduced to study the asymptotic behaviour of the minimum distance of projective Reed–Muller-type codes  \cite{CSTPV}. For any $\mfp\in \Ass(I)$, the local $\v$-number of $I$ at $\mfp$ is defined as,
	$\v_\mfp(I):=\min\{u: \mbox{there exists } f\in S_u \mbox{ such that }\mfp=(I:f) \}$ and by definition $\v(I)=\min\{\v_\mfp(I): \mfp\in\Ass(I)\}$ \cite{CSTPV}. The invariant, $\v$-number of an ideal, attracted considerable attention from researchers working at the intersection of Commutative Algebra and Combinatorics. This led to the discovery of numerous connections with other important invariants, especially in graph theory, enriching the understanding of both algebraic and combinatorial structures (see \cite{F23},  \cite{GRV}, \cite{JV}, \cite{JVS},  \cite{KSa}, \cite{SS} and the references therein). The $\v$-numbers of homogeneous ideals are also linked with the Castelnuovo-Mumford regularity of those ideals (\cite{AS},  \cite{JV}, \cite{KSa}, \cite{SS},\cite{Y}).
	\\ Lately, there has been a growing interest in exploring the asymptotic behaviour of the $\v$-numbers associated to homogeneous ideals (see \cite{AS}, \cite{Co}, \cite{FS},  \cite{FSmarch} \cite{MRK}). In \cite{Co}, Conca proved that the invariant $\v(I^n)$ is eventually a linear polynomials in $n$ with integer coefficients. Recently, in \cite{AS}, the authors extended this result for filtrations and proved that  the $\v$-invariant associated to an integral closure filtartion  $\{\overline{I^n}\}$ of homogeneous ideals is also eventually a linear polynomial in $n$. While the study of $\v$-numbers for ordinary powers of ideals has been extensively developed, extending these investigations to integral closure filtrations presents significantly greater difficulties. Unlike the case of ordinary powers, the behaviour of integral closures is more subtle. Despite the combinatorial structure of monomial ideals, their integral closures often exhibit considerably more complex algebraic properties. Even when the generators of a monomial ideal are explicitly known, determining the generators of its integral closure remains a challenging problem, both from computational and theoretical perspectives.
	\\The objective of this paper is to analyze the $\v$-numbers of powers of ideals and the integral closures of powers of ideals. Among the themes presented are: $(1)$ the computation of the  $\v$-numbers of powers of complete intersection monomial ideals and $(2)$ the study of the  $\v$-numbers  associated to integral closure filtrations of complete intersection monomial ideals with a focus on elucidating their relationship with the Castelnuovo–Mumford regularity. In Section \ref{section2},  we recall the required definitions and review several well-known results on monomial ideals which we use throughout the paper. For a squarefree complete intersection monomial ideal $I$ with the minimal monomial generating set $\mathcal G(I)=\{u_1,\ldots,u_r\}$, Jaramillo and Villarreal  \cite{JV} showed that $\v(I)=\deg u_1+\cdots+\deg u_r-r$. Extending this result via polarization techniques, Saha and Sengupta proved that the formula holds for arbitrary complete intersection monomial ideals \cite{SS}. In \cite{BM}, $\v$-numbers of powers of irreducible monomial ideals were computed. More recently, the $\v$-numbers of powers of complete intersection monomial ideals were investigated in \cite{FM}. In Section \ref{section3}, we provide an alternative proof of the above result. As part of our approach, we show that if $I$ is a complete intersection monomial ideal and  $m\in S$ is any monomial satisfying $(I:m)=P$ for some  $P\in\Ass(I)$, then there exists a fixed principal  monomial ideal $\langle g_P\rangle$ in $S$ (depending on $P$) such that $m\in \langle g_P\rangle$ and $\deg g_P=\v(I)$. 
	\begin{Theorem}{\em(Theorem \ref{ci})} 
		Let $I=\langle u_1,\ldots u_r\rangle$ be a complete intersection monomial ideal in $S$ and $P=\langle y_1,\dots,y_r\rangle$ be any associated prime of  $I$ where $y_1,\ldots,y_r\in\{x_1,\ldots,x_m\}$.  Let $\displaystyle g={\prod\limits_{i=1}^r {u_i}}/{\prod\limits_{i=1}^r{y_i}}.$
		\begin{enumerate}
			\item[$(1)$] If $m$ is a monomial such that $(I^n:m)=P$ then $\displaystyle g\mid m$ and $\displaystyle\deg{\frac{m}{g}}\geq (n-1)\alpha(I).$
			\item[$(2)$] For all $n\geq 1$, $\v(I^n)=\v_P(I^n)=n\alpha(I)+\v(I)-\alpha(I)$ where $\v(I)=\sum\limits_{i=1}^r \deg u_i-r$.  
		\end{enumerate}		
	\end{Theorem}	
	We start Section \ref{section4}, by showing that for any homogeneous ideal $I$,   $\displaystyle\lim\limits_{n\to\infty}\frac{\alpha(\overline{I^n})}{n}=\alpha(I)$ (see Lemma \ref{G}) extending the result in \cite[Corollary 3.8]{GSV}. 
	Next, we focus on investigating the $\v$-numbers associated to integral closure filtrations of complete intersection ideals and explore their relationship with the Castelnuovo–Mumford regularity of these ideals.  Let $I$ be an unmixed monomial ideal in $S$ with a unique irredundant decomposition $I=\bigcap_{i=1}^sQ_i$ where $Q_i$'s are irreducible monomial ideal. Suppose $I$ is a Simis ideal (i.e. $I^n=I^{(n)}$ for all $n\geq 1$). Then  by \cite[Theorem 4.1]{GMBV}, ${\overline{I^n}}=\bigcap_{i=1}^h{\overline{Q_i^n}}$ for all $n\geq 1$. Let $P\in \Ass(\overline{I^n})$. Then for any monomial $f\in S$ with $\bigcap_{i=1}^s(\overline{Q_i^n}:f)=(\overline{I^n}:f)=P$, we have $(\overline{Q_i^n}:f)=P$ for some $1\leq i\leq s$ (as $I$ is unmixed) and thus $\v(\overline{Q_i^n})\leq \v_P(\overline{I^n})$. This motivates us to study the $\v$-numbers associated to  integral closure filtrations of irreducible monomial ideals and we prove the following.
	\begin{Theorem}{\em(Theorem \ref{gen})} 
		Let $I=\langle x_{i_1}^{a_1},\ldots,x_{i_k}^{a_k}\rangle$ be a monomial ideal in $S$ such that $1\leq a_1\leq \cdots\leq a_k$ and $k\geq 2$. Then for all $n\geq 1$,
		\begin{enumerate}
			\item[$(1)$]   $\displaystyle n\alpha(I)
			+\lceil \frac{a_2}{\alpha(I)}-\frac{a_2}{\delta(I)}\rceil-1\leq \v(\overline {I^n})\leq n\alpha(I)+\lceil \delta(I)/\alpha(I)\rceil-2$ for all $n\geq 1$.
			\item[$(2)$] Suppose there exists $j\geq2$ such that $a_1=\cdots=a_{j-1}\leq a_j=\cdots=a_k$, then $\displaystyle\v(\overline{I^n})=n\alpha(I)+\lceil\frac{\delta(I)}{\alpha(I)}\rceil-2$.
			\item[$(3)$]  $\v(\overline {I^n})= n\alpha(I)+\lceil \delta(I)/\alpha(I)\rceil-2$ when $k=2$.
			\item[$(4)$]  
			\[\displaystyle0\leq \reg(S/\overline {I^n})-\v(\overline {I^n})\leq \left\{
			\begin{array}{l l}
				~~~~~\displaystyle (\delta(I)-\alpha(I))n+\dim S/I& \quad \text{if $a_2=\delta(I)$ }\\ \vspace{0.3mm}\\
				\displaystyle (\delta(I)-\alpha(I))n+\dim S/I-1& \quad \text{if $a_2<\delta(I)$ . }\\ 
			\end{array} \right.\] 
			In particular, if $I$ is an equigenerated irreducible monomial ideal then for all $n\geq 1$, $\reg(S/\overline {I^n})=\v(\overline {I^n})=n\alpha(I)-1$. 
		\end{enumerate}
	\end{Theorem}
	As a consequence, we compute $\v$-numbers of integral closures of powers of an edge ideal $I(D)$ of a weighted oriented graph $D$ whose underlying graph is a complete bipartite graph and  show that $\v(\overline{I(D)^n})=n\alpha(I(D))-1$ for all $n\geq 1$ (Corollary \ref{WOG}). This leads us to prove Corollary \ref{negativeanser}, which asserts that for any integer $p\geq 3$, there exists a squarefree monomial ideal $I$ (not necessarily equigenerated) in a polynomial ring $R$ such that $\big(\dim R/I-\depth R/I\big)-\v(I)=p-2$ and it provides a negative answer to the question posed in \cite[Question 5.5]{SS}. Our next aim is to analyze the integral closure filtrations of height three irreducible monomial ideals, for which we are able to explicitly compute the corresponding $\v$-numbers in most instances.
		\begin{Theorem}{\em(Theorem \ref{threegen})} 
		Let $I=\langle x_{i_1}^{a_1},x_{i_2}^{a_2},x_{i_3}^{a_3}\rangle$ be a monomial ideal in $S$ with $1\leq a_1\leq a_2\leq a_3$. 
		\begin{enumerate}
			\item[$(1)$] $\v(\overline{I})=\min\limits_{1\leq m\leq a_1} \deg f_m$.
			\item[$(2)$] If $a_1=1$ then $\displaystyle\v(\overline{I^n})=n+a_2+\lceil\frac{a_3}{a_2}\rceil-3$ for all $n\geq 1$.
					\item[$(3)$] Let $a_1\geq 2$.
			\begin{enumerate}
				\item[$(i)$] $\displaystyle\v(\overline{I})=\min\limits_{1\leq m\leq a_1-1} \deg f_m$.
				\item[$(ii)$] Let $\v(\overline{I})=\deg f_l$ for some $1\leq l\leq a_1-1$. Then for all $n\geq 1$, $$\displaystyle\v(\overline{I^n})=(n-1)a_1+\deg f_l=na_1-l+\lceil\frac{la_2}{a_1}\rceil+\lceil\frac{a_3}{a_2}(\frac{l a_2}{a_1}-\lceil\frac{la_2}{a_1}\rceil+1)\rceil-2.$$
			\item[$(iii)$] If $a_2\equiv0~(\mod a_1)$ or $a_2\equiv1~(\mod a_1)$, then $\v(\overline{I})=\deg f_1$ and for all $n\geq 1$,
			$$\displaystyle\v(\overline{I^n})=(n-1)a_1+\deg f_1=na_1+\lceil\frac{a_2}{a_1}\rceil+\lceil\frac{a_3}{a_2}(\frac{ a_2}{a_1}-\lceil\frac{a_2}{a_1}\rceil+1)\rceil-3.$$
			\item[$(iv)$] $\displaystyle\lceil a_2/a_1\rceil-1\leq \lceil a_2/a_1-a_2/a_3\rceil\leq \lceil a_2/a_1\rceil$ and if $\displaystyle\lceil a_2/a_1\rceil-1=\lceil a_2/a_1-a_2/a_3\rceil$ then for all $n\geq 1$, $\displaystyle\v(\overline{I^n})=na_1+\lceil a_2/a_1\rceil-2.$  
			\item[$(v)$] If $\displaystyle\lceil\frac{(a_1-1)a_3}{a_1a_2}\rceil=1$ then $\displaystyle\v(\overline{I^n})=(n-1)a_1+\deg f_1$ for all $n\geq 1$.
			\end{enumerate}	
		\end{enumerate}
		Here $f_m$ are the monomials in Proposition \ref{monomialofvint}.
	\end{Theorem}
	Finally, we show that $\v$-numbers of integral closure filtrations of complete intersection ideals are bounded above by the $\v$-numbers of powers of those ideals. In particular, computations of $\v$-numbers of integral closure filtrations of $\height 2$ complete intersection ideals for certain cases allow us to show that for any integer $a\geq 1$, there exists a height two equigenerated complete intersection monomial ideal $I$ such that $\reg(S/\overline{I^n})-\v(\overline{I^n})=a-1$ for all $n\geq 1$.
		\begin{Theorem}{\em(Theorem \ref{uppbndcompleteint})}
		Let $I=\langle u_1,\dots,u_r\rangle$ be a complete intersection monomial ideal in $S$ with $\deg u_1=\alpha(I)$. Then the following hold.
		\begin{itemize} 
			\item[$(1)$] 
			\begin{enumerate}
				\item[$(i)$] $\v(\overline{I^n})\leq\v(I^n)$ for all $n\geq 1$.
				\item[$(ii)$] If $u_1$ and $u_t$ are not squarefree for some $2\le t\le r$, then $\v(\overline{I^n})\le\v(I^n)-1$ for all $n\geq 1$.
				\item[$(iii)$] Let $I$ be an equigenerated complete intersection ideal with $u_i=m_i^\alpha$  where $m_i$ are squarefree monomials for all $1\leq i \leq r$. Then for all $n\geq 1$, $$\reg(S/\overline{I^n})=\v(\overline{I^n})=n\alpha(I)+(|\supp(u_1)|-1)(r-1)-1.$$
			\end{enumerate}	
			\item[$(2)$] Suppose $r=2$, $u_1=(x_{i_1}\cdots x_{i_q})^\alpha$ and $u_2=x_{j_1}^{\beta_1}\cdots x_{j_l}^{\beta_l}$ with $1\leq \alpha\leq\beta_1\leq\cdots\leq\beta_l$ and $q\leq l$. Then  for all $n\geq 1$ and $P\in \Ass(\overline{I^n})$, $$\displaystyle\v(\overline{I^n})=\v_P(\overline{I^n})=n\alpha(I)+\sum\limits_{j=1}^l\lceil\frac{\beta_j}{\alpha}\rceil-2.$$ In particular, if $\alpha=1$ then  $\v(I^n)=\v(\overline{I^n})$ for all $n\geq 1$.
			\item[$(3)$] Suppose $r=2$, $u_1=x_{i_1}^\alpha$, $u_2=x_{j_1}^{\beta_1}\cdots x_{j_l}^{\beta_l}$ with $l\geq 2$ and $\deg u_2=\alpha$. Then $\v(\overline{I^n})\geq n\alpha(I)$ for all $n\geq 1$. 
			\\Moreover, if there exists $j$ with $2\beta_j\geq \alpha$ then $\v(\overline{I^n})=n\alpha(I)$ for all $n\geq 1$.\vspace{2mm}
			\item[$(4)$] For any integer $a\geq 1$, there exists a height two equigenerated complete intersection monomial ideal $I$  such that $\reg(S/\overline{I^n})-\v(\overline{I^n})=a-1$ for all $n\geq 1$.
		\end{itemize}
	\end{Theorem}
	We conclude Section 4 by showing that, for complete intersection monomial ideals, the  $\v$-numbers of powers of ideals can be arbitrarily larger than the  $\v$-numbers of integral closures of their powers.
	\begin{Corollary}{\em{(Corollary \ref{diff})}} For any integer $q\geq 0$, there exist  an equigenerated complete inetersection monomial ideal $I$ and a non-equigenerated complete inetersection monomial ideal $J$ such that for all $n\geq 1$, $$\v(I^n)-\v(\overline{I^n})=q=\v(J^n)-\v(\overline{J^n}).$$ 
	\end{Corollary}	
	\section{preliminaries}\label{section2}
		Let $I$ be an ideal in a ring $R$. An element $x\in R$ is called integral over $I$ if there exists an integer $m\geq 1$ and elements $a_i\in I^i$ for $1\leq i\leq m$ such that $$x^m+a_1x^{m-1}+\cdots+a_{m-1}x+a_m=0.$$ The integral closure of $I$, denoted by $\overline{I}$, is an ideal in $R$, which is the set of all elements in $R$ that are integral over $I$ \cite[Corollary 1.3.1]{SH}. If $I$ is a monomial ideal in a polynomial ring over a field then $\overline{I}$ is also a monomial ideal \cite[Proposition 1.4.2]{SH}. 
	\\Throughout this article, $S=K[x_1,\ldots,x_m]$ denote a polynomial ring over a field $K$ and $\mathbb N$ denote the set of all nonnegative integers. 
	For any ${\bf {a}}=(a_1,\ldots,a_m)\in\mathbb N^m$, $x^{\bf {a}}$ represents the monomial  $x_1^{a_1}\cdots x_m^{a_m}$. We denote the degree of a monomial $x^{\bf {a}}$ by $\deg x^{\bf {a}}$. Let $f$ be a monomial in $S$. Then $\supp(f):=\{x_i: 1\leq i\leq m\mbox{ and }x_i\mbox{ divides } f\}$. An ideal in $S$ is called a monomial ideal if it is generated by monomials. 
		\begin{Remark}\label{powers}{\em
			Let $I=\langle u_1,\ldots,u_r\rangle$ be a monomial ideal in $S$. For any $m\geq 1$, consider the monomial ideals $J_m=\langle u_1^m,\ldots,u_r^m\rangle$ in $S$. Then by \cite[Lemma 2.5]{RRV}, for any $m\geq 1$, we have $\np(I^m)=\np(J_m)$ and thus $\overline{I^m}=\overline{J_m}$.}
	\end{Remark}	
	We recall the following properties of monomial ideals that we use in our results.
	\begin{enumerate}
		\item[$(P1)$] A monomial ideal $I$ has a unique minimal monomial set of generators \cite[Proposition 1.1.6]{HH} and we denote it by $\mathcal G(I)$. For any monomial ideal $I$, we define $\alpha(I):=\min\{d: I_d\neq 0\}$ and $\delta(I):=\max\{\deg u: u\in \mathcal G(I)\}$. 
		\\A monomial ideal $I$ is said to be generated by pure powers of variables if $\mathcal G(I)=\{x_{i_1}^{a_{i_1}},\ldots x_{i_r}^{a_{i_r}}\}$ for some ${i_1},\ldots,{i_r}\in\{1,\ldots,n\}$.
		\\A monomial ideal $I$ is called a complete intersection monomial  ideal if $\height(I)=\mu(I)$ where $\mu(I)$ is the cardinality of $\mathcal G(I)$.
		\item[$(P2)$] If $I$ and $J$ are monomial ideals then the monomial ideal $I\cap J$ is generated by the set $\{\lcm(u,v):u\in\mathcal G(I), v\in \mathcal G(J)\}$ \cite[Proposition 1.2.1]{HH}.
		\item[$(P3)$] If $I$ is a monomial ideal and $v$ is a monomial then the ideal $(I:v)$ is generated by the set $\{u/\gcd(u,v):u\in\mathcal G(I)\}$ \cite[Proposition 1.2.2]{HH}. 
		\item[$(P4)$] Let $I=\langle
		x_{i_1}^{a_{i_1}},\ldots x_{i_r}^{a_{i_r}} \rangle$ be a monomial ideal generated by pure powers of variables. Then $I^n$, $\overline{I^n}$ are  $\langle{x_{i_1},\ldots x_{i_r}} \rangle$-primary ideals for all $n\geq 1$  \cite[Proposition 6.1.7 and Proposition 12.1.4]{MAVill}. By Remark \ref{powers}, In particular, for any irreducible monomial ideal $I=\langle x_{i_1}^{a_{i_1}},\ldots x_{i_r}^{a_{i_r}}\rangle$, we have $\overline{I^n}=\overline{\langle x_{i_1}^{na_{i_1}},\ldots x_{i_r}^{na_{i_r}}\rangle}$ for all $n\geq 1$.
		\item[$(P5)$]  Let $I$ be a monomial ideal in $S$. Then $I=\bigcap\limits_{i=1}^h Q_i$ for some monomial ideals $Q_i$ which are generated by pure powers of variables \cite[Theorem 1.3.1]{HH}. If none of the ideals $Q_i$ can be omitted from the above representation, i.e., the decomposition is irredundant, then the representation is unique. 
		\end{enumerate}
		The next result follows from \cite[Proposition 3.3]{DMGV}. We provide a simpler proof that aligns more naturally with our framework.
	\begin{Lemma}{\label{membership}}
		Let $I=\langle x_{i_1}^{b_{i_1}},\ldots x_{i_r}^{b_{i_r}}\rangle$ be a monomial ideal in $S$ generated by pure powers of variables. Then a monomial $x_1^{a_1}\cdots x_n^{a_n}\in \overline I$ if and only if  $\displaystyle \sum\limits_{l=1}^r \frac{a_{i_l}}{b_{i_l}}\geq 1$.
		\end{Lemma}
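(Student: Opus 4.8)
The plan is to avoid the Newton polyhedron machinery and argue directly from the definition of integral closure, using throughout the elementary fact that for a monomial $u\in S$ and a monomial ideal $J$ one has $u\in\overline J$ as soon as $u^k\in J^k$ for some $k\ge 1$ (the monic relation $T^k-u^k=0$ exhibits the integral dependence, since all its coefficients are $0$ except the constant term $-u^k\in J^k$). Write $u=x^{\bf a}=x_1^{a_1}\cdots x_n^{a_n}$. I would first record the description of $I^k$: it is the monomial ideal generated by the monomials $\prod_{l=1}^r x_{i_l}^{b_{i_l}c_l}$ with $c_l\in\NN$ and $\sum_{l=1}^r c_l=k$; consequently $u^k=x^{k{\bf a}}\in I^k$ holds precisely when there exist such $c_l$ with $b_{i_l}c_l\le k\,a_{i_l}$ for every $l$.

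For the implication ``$\sum_l a_{i_l}/b_{i_l}\ge 1\Rightarrow x^{\bf a}\in\overline I$'', I would take $k$ to be any common multiple of $b_{i_1},\dots,b_{i_r}$, so that every $k\,a_{i_l}/b_{i_l}$ is a nonnegative integer and $\sum_l k\,a_{i_l}/b_{i_l}=k\sum_l a_{i_l}/b_{i_l}\ge k$. Then one can pick integers $c_l$ with $0\le c_l\le k\,a_{i_l}/b_{i_l}$ and $\sum_l c_l=k$; the monomial $\prod_l x_{i_l}^{b_{i_l}c_l}$ is a generator of $I^k$ dividing $x^{k{\bf a}}$, so $u^k\in I^k$ and hence $u\in\overline I$ by the observation above.

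For the converse, suppose $x^{\bf a}\in\overline I$ and fix an integral equation $u^m+c_1u^{m-1}+\cdots+c_m=0$ with $c_j\in I^j$ and $m\ge 1$. Since distinct monomials are $K$-linearly independent in $S$, the monomial $u^m$ must occur in the expansion of some $c_ju^{m-j}$ with $1\le j\le m$; as multiplication by $u^{m-j}$ causes no cancellation, this forces $u^j\in\supp(c_j)$, and because $I^j$ is a monomial ideal every monomial in $\supp(c_j)$ lies in $I^j$, so $u^j\in I^j$. By the description of $I^j$ there are $c_l\in\NN$ with $\sum_l c_l=j$ and $b_{i_l}c_l\le j\,a_{i_l}$; summing $c_l\le j\,a_{i_l}/b_{i_l}$ over $l$ and dividing by $j$ yields $\sum_l a_{i_l}/b_{i_l}\ge 1$.

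The whole argument is bookkeeping once the two standard ingredients are in place, so I expect the only delicate point to be the extraction step in the converse --- namely justifying carefully that an integral dependence relation in the polynomial ring forces an honest membership $u^j\in I^j$ --- which rests on linear independence of monomials together with the fact that a monomial ideal contains every monomial appearing in any of its elements. One could instead quote that $\overline I$ corresponds to $\NN^n\cap\np(I)$ and that the bounding facet of $\np(I)$ through the vertices $b_{i_l}e_{i_l}$ is $\sum_l t_{i_l}/b_{i_l}=1$; the proof sketched here is the self-contained version of that observation.
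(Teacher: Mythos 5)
Your proposal is correct, but it takes a genuinely different route from the paper. The paper works through the Newton polyhedron: it identifies membership of $x^{\bf a}$ in $\overline I$ with ${\bf a}\in\np(I)=\conv(b_{i_1}e_{i_1},\ldots,b_{i_r}e_{i_r})+\RR^n_{\geq 0}$, reads off the inequality $\sum_l a_{i_l}/b_{i_l}\geq 1$ from the convex coefficients in one direction, and in the other direction uses an intermediate-value-theorem argument on the map $(x_1,\ldots,x_r)\mapsto\sum_l x_l$ to manufacture weights $\mu_l$ exhibiting ${\bf a}$ as a point of $\np(I)$. You instead argue straight from the definition of integral closure: for sufficiency you clear denominators by taking $k$ a common multiple of the $b_{i_l}$, choose integers $c_l\leq ka_{i_l}/b_{i_l}$ with $\sum_l c_l=k$, and use the relation $T^k-u^k=0$ with $u^k\in I^k$; for necessity you extract $u^j\in\supp(c_j)\subseteq I^j$ from an integral equation via linear independence of monomials, then read the inequality off a dividing generator of $I^j$. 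Both arguments are sound (your greedy choice of the $c_l$ is legitimate since the bounds are integers summing to at least $k$, and the support-extraction step is the standard proof that a monomial lies in $\overline I$ iff some power $u^j$ lies in $I^j$). What your version buys is self-containedness: no polyhedral geometry and no continuity argument, only divisibility bookkeeping, and in passing you reprove the relevant special case of the fact that $\overline I$ is a monomial ideal. What the paper's version buys is brevity within its own framework, since the Newton polyhedron description of $\overline{I}$ is already used elsewhere (e.g.\ in Remark \ref{powers}) and the convex-combination formulation plugs directly into that machinery.
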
	
		\begin{proof}
			Let ${\bf{a}}=(a_1,\ldots,a_n)$ and for all $1\leq i\leq n$, $e_i=(0,\ldots,0,1,0,\ldots,0)$ where $1$ is at $i$-th position. Now $x^{\bf{a}}\in \overline I$ implies ${\bf{a}}\in \np(I)=\conv(b_{i_1}e_{i_1},\ldots b_{i_r}e_{i_r})+\mathbb R^n_{\geq 0}$. Therefore there exist $\lambda_{l}\in \mathbb R$ such that $0\leq \lambda_l\leq 1$  for $1\leq l\leq r$ with $\sum\limits_{l=1}^r \lambda_l=1$ such that $(a_{i_1},\ldots,a_{i_r})\geq (\lambda_1{b_{i_1}},\ldots, \lambda_r{b_{i_r}})$. Hence $\displaystyle \sum\limits_{l=1}^r \frac{a_{i_l}}{b_{i_l}}\geq \sum\limits_{l=1}^r \lambda_l=1.$
				Suppose $\displaystyle \sum\limits_{l=1}^r \frac{a_{i_l}}{b_{i_l}}\geq 1.$ Let $\displaystyle \sum\limits_{l=1}^r \frac{a_{i_l}}{b_{i_l}}=1+L$ for some $L\in\mathbb R_{\geq 0}$. If $r=1$ then $a_{i_1}\geq b_{i_1}$ and hence $x_{i_1}^{a_{i_1}}\in I\subset \overline I$. Therefore $x^{\bf{a}}\in \overline I$. Suppose $r\geq 2$. If ${a_{i_l}}\geq b_{i_l}$ for some $1\leq l\leq r$ then $x_{i_1}^{a_{i_1}}\cdots x_{i_r}^{a_{i_r}}\in \overline I$ and hence $x^{\bf{a}}\in \overline I$. Therefore we assume that $\displaystyle 0\leq  \frac{a_{i_l}}{b_{i_l}}< 1$ for all $1\leq l\leq r$.  Consider the continuous function $\displaystyle\phi: \prod\limits_{l=1}^r[0,\frac{a_{i_l}}{b_{i_l}}]\rightarrow [0,1+L]$ defined by $\phi(x_1,\ldots,x_r)=\sum\limits_{l=1}^rx_l$. Since $\phi(0,\ldots,0)=0$ and $\phi(\frac{a_{i_1}}{b_{i_1}},\ldots,\frac{a_{i_r}}{b_{i_r}})=1+L$, we have $\phi^{-1}(1)\neq\emptyset$.  Let $\mu_{l}\in \mathbb R$ such that $\displaystyle 0\leq \mu_{l}\leq \frac{a_{i_l}}{b_{i_l}}$ for all $1\leq l\leq r$ and $\displaystyle \sum\limits_{l=1}^r (\frac{a_{i_l}}{b_{i_l}}-\mu_l)=1.$ Therefore
			\begin{eqnarray*}\sum\limits_{p=1}^ra_{i_p}e_{i_p}&=&\sum\limits_{p=1}^r(\frac{a_{i_p}}{b_{i_p}}-\mu_p)b_{i_p}e_{i_p}+\sum\limits_{p=1}^r\mu_pb_{i_p}e_{i_p}\in \np(I).\end{eqnarray*}
			Hence $x_{i_1}^{a_{i_1}}\cdots x_{i_r}^{a_{i_r}}\in \overline I$ and thus $x^{\bf{a}}\in \overline I$.
		\end{proof}	
		
			\begin{Lemma}\label{least}
			Let $I$ be a monomial ideal in $S$ and $P\in \Ass(I)$. Suppose $f\in S$ is a monomial  such that $(I:f)=P$ and $\v_P(I)=\deg f$. Then $\supp(f)\subseteq \bigcup\limits_{u\in \mathcal{G}({I})}\supp(u)$.
		\end{Lemma}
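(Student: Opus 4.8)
The plan is to argue by contradiction: from $f$ I would extract a monomial $g$ of strictly smaller degree with $(I:g)=P$, which is impossible since $\v_P(I)=\deg f$ is the minimum degree of a monomial whose colon with $I$ equals $P$. So assume $\supp(f)\not\subseteq\bigcup_{u\in\mathcal G(I)}\supp(u)$. Then $\deg f\geq 1$, and we may fix a variable $x_j$ with $x_j\mid f$ but $x_j\nmid u$ for every $u\in\mathcal G(I)$. Write $f=x_j\,g$ with $g$ a monomial, so that $\deg g=\deg f-1$; it remains to show $(I:g)=P$.

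The only point to check is that such an $x_j$ is inert for the colon operation on $I$. By $(P3)$ the ideal $(I:f)=(I:x_j g)$ is generated by $\{\,u/\gcd(u,x_j g):u\in\mathcal G(I)\,\}$, and since $x_j$ divides no $u\in\mathcal G(I)$ we have $\gcd(u,x_j g)=\gcd(u,g)$ for each such $u$; hence this is exactly the generating set attached by $(P3)$ to $(I:g)$, so $(I:g)=(I:f)=P$. Now $P\in\Ass(I)$ and $\deg g=\deg f-1<\deg f=\v_P(I)$, contradicting the minimality in the definition of $\v_P(I)$. Therefore no such variable $x_j$ exists, which is exactly the assertion $\supp(f)\subseteq\bigcup_{u\in\mathcal G(I)}\supp(u)$.

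I do not expect a genuine obstacle: the whole proof is the single reduction $f\mapsto f/x_j$ plus the elementary identity $\gcd(u,x_j g)=\gcd(u,g)$, which holds precisely because $x_j\notin\supp(u)$. The only thing that takes a moment of thought is to notice that this reduction is always available once the claimed support containment fails; after spotting this ``inert variable'' move, property $(P3)$ does the rest. (When $I$ is a monomial prime, $\mathcal G(I)$ is a set of variables and $P=I$, so the argument applies verbatim, the case $\deg f=0$ being trivial since then $\supp(f)=\emptyset$.)
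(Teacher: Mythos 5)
Your proposal is correct and follows essentially the same route as the paper: both remove a variable $x_j\in\supp(f)$ that divides no generator and show $(I:f/x_j)=(I:f)=P$, contradicting the minimality of $\v_P(I)$ (the paper phrases this via $(I:f)=((I:x_j):f/x_j)$ and $(I:x_j)=I$, you via the $\gcd$ identity from $(P3)$, which is the same observation).
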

		\begin{proof}
			Suppose there exists $x\in \supp(f)\setminus\bigcup\limits_{u\in \mathcal{G}({I})}\supp(u)$. Let $m=f/x$. Then $P=(I:f)=((I:x):m)=(I:m)$ which contradicts that $\v_P(I)=\deg f$.
		\end{proof}
		For an ideal $I$ in $S$, the $n$-th symbolic power of $I$ is defined in two ways in literature:
		\begin{enumerate}
			\item $I^{(n)}=\bigcap\limits_{Q\in\MinAss(I)}I^nS_Q\cap S$,
			\item 	$I^{\langle n\rangle}=\bigcap\limits_{Q\in\Ass(I)}I^nS_Q\cap S$.	
		\end{enumerate}	
		Note that $I^n\subseteq I^{\langle n\rangle}\subseteq I^{(n)}$ for all $n\geq 1$. If $I$ has no embedded primes then $I^{\langle n\rangle}= I^{(n)}$ for all $n\geq 1$.
		\\Let $M$ be a finitely generated graded module over a polynomial ring $S$. Consider a  minimal graded free resolution of $M$ as $S$-module
		$$0\rightarrow \bigoplus\limits_{j}S(-j)^{\beta_{pj}}\rightarrow \bigoplus\limits_{j}S(-j)^{\beta_{(p-1)j}}\rightarrow\cdots\rightarrow\bigoplus\limits_{j}S(-j)^{\beta_{1j}}\rightarrow \bigoplus\limits_{j}S(-j)^{\beta_{0j}}\rightarrow 0.$$ Then the Castelnuovo-Mumford regularity of $M$, denote by $\reg(M)$, is defined as $$\reg(M):=\max\{j-i: \beta_{ij}\neq 0, 0\leq i\leq p\}.$$  For any homogeneous ideal $I$ in $S$, we have $\reg(S/I)=\reg(I)-1$.
		\subsection*{Graph} 
		A finite simple graph $G$ with the vertex set $V(G)$ and the edge set $E(G)$ is called a bipartite graph if the vertex set $V(G)$ can be partitioned into two  sets $A$, $B$ and each edge is of the form $(i,j)\in E(G)$ such that $i\in A$ and $j\in B$. A bipartite graph $G$ with a vertex set $V(G)=A\sqcup B$ is called a complete bipartite graph if every vertex of $A$ is adjacent to every vertex of $B$. We denote a complete bipartite graph by $K_{m,n} $ where the vertex set $V(G)$ is partitioned into two sets $A$ and $B$ with $|A|=m\leq n=|B|$. A bipartite graph $G$ is called a complete bipartite graph if $E(G)=\{(a,b): a\in A, b\in B \}$.
	\\Let $G$ be a finite simple graph with the vertex set $V(G)$ and the edge set $E(G)$. A weighted oriented graph $D$ with underlying graph $G$, is a graph $D$ with a triplet $(V(D), E(D),w)$ where $V(D)=V(G)$, $E(D)\subset E(G)\times E(G)$ with $|E(G)|=|E(D)|$ and $E(G)$ is a set of pairs $\{x_i,x_j\}$ such that $(x_i,x_j)\in E(D)$ and $w:V(D)\rightarrow \ZZ_{>0}$ is a weight function. The sets $V(D)=\{x_1,\ldots,x_n\}$ and $E(D)$ are called the vertex set and the edge set of $D$ repectively. Consider a polynomial ring $K[x_1,\ldots,x_n]$ over a field $K$. The edge ideal of $D$ is defined as $I(D):=\{x_ix_j^{w(x_j)}: (x_i,x_j)\in E(D)\}$.
\section{$\v$-numbers of powers of complete intersection ideals}{\label{section3}}
In this section, we show that for a complete intersection monomial ideal $I=\langle u_1,\ldots,u_r\rangle$, $\v(I^n)=\v_P(I^n)=n\alpha(I)+\v(I)-\alpha(I)$ for all $P\in\Ass(I)$ and  $n\geq 1$ where $\v(I)=\sum\limits_{i=1}^r \deg u_i-r$. As mentioned in the introduction, this result is recently proved in \cite{FM}. Here, we provide an alternative proof and  in the process, we show that every monomial $m\in S$ satisfying $(I:m)=P$ for some  $P\in\Ass(I)$, belongs to a fixed principal  monomial ideal $\langle g_P\rangle$ in $S$ depending on $P$ and $\deg g_P=\deg g_Q$ for all $P,Q\in \Ass(I)$.
\\Let $I$ be a monomial complete intersection ideal of $\height(I)=r$ and $\mathcal G(I)=\{u_1,\ldots,u_r\}$. We recall some well-known results on complete intersection monomial ideals that we use in our proofs. 
	\begin{enumerate}
\item[$(i)$] $\{u_1,\ldots,u_r\}$ is a monomial regular sequence and $\gcd(u_i,u_j)=1$ for all $i\neq j$.
\item[$(ii)$] Let $I=Q_1\cap\cdots\cap Q_s$ be the unique irredundant decomposition of $I$ where $Q_i$ are monomial ideals generated by pure powers of variables for all $1\leq i\leq s$. Then $\sqrt{Q_i}\neq \sqrt{Q_j}$ for all $i\neq j$ and $\height(\sqrt{Q_i})=r$ for all $1\leq i\leq s$.
\item[$(iii)$] For all $n\geq 1$, $I^{(n)}=I^n$. Since $I$ is generated by a regular sequence, the graded associated ring of $I$, $\gr(I)=\bigoplus\limits_{j\geq 0}I^j/I^{j+1}$, is isomorphic to a polynomial ring over $S/I$. Hence $I^j/I^{j+1}$ is a free $S/I$-module for all $j\geq 0$. Therefore using the short exact sequence of $S$-modules
$$0\longrightarrow I^j/I^{j+1}\longrightarrow S/I^{j+1}\longrightarrow S/I^{j}\longrightarrow 0,$$  for all $j\geq 1$, we get
	\begin{enumerate}
\item[$(a)$] $\Ass(S/I^2)=\Ass(S/I)$, 
\item[$(b)$] $\Ass(S/I)\subset\Ass(S/I^3)\subset\Ass(S/I^2)\cup \Ass(S/I)=\Ass(S/I)$ and continuing this process, we have $\Ass(S/I^n)=\Ass(S/I)$ and hence $I^{(n)}=I^n$ for all $n\geq 1$.
\end{enumerate}	
\end{enumerate}	
\begin{Lemma}\label{In:u=I}
	Let $I=\langle u_1,\ldots u_r\rangle$ be a complete intersection monomial ideal in $S$. Then for any integers $b_i\geq 0$ for $1\leq i\leq r$ with $\sum_{i=1}^r b_i\leq n$,
	 $\displaystyle(I^n: u_1^{b_1}\cdots u_r^{b_r})=I^{n-\sum\limits_{i=1}^r b_i}$. 
\end{Lemma}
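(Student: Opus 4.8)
The plan is to prove the statement by a double inclusion, exploiting the fact that $u_1, \dots, u_r$ form a monomial regular sequence (fact $(i)$ recalled above) and that the associated graded ring $\gr(I)$ is a polynomial ring over $S/I$ (fact $(iii)$). Write $b = \sum_{i=1}^r b_i$, so $b \le n$. The inclusion $I^{n-b} \subseteq (I^n : u_1^{b_1}\cdots u_r^{b_r})$ is immediate: multiplying an element of $I^{n-b}$ by $u_1^{b_1}\cdots u_r^{b_r} \in I^{b}$ lands in $I^{n}$. The content is the reverse inclusion, and here I would argue that this is really the statement that the regular sequence $u_1^{b_1}, \dots$ (more precisely the monomial $u_1^{b_1}\cdots u_r^{b_r}$) is a nonzerodivisor on $S/I^{n-b}$ in the appropriate graded sense.

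First I would reduce to the case $b = n$ by induction, or alternatively argue directly. Here is the direct route. Suppose $m$ is a monomial (it suffices to check monomials, since all ideals involved are monomial ideals) with $m \cdot u_1^{b_1}\cdots u_r^{b_r} \in I^n$. I want to show $m \in I^{n-b}$. Consider the largest $k$ with $m \in I^{k}$ (set $k = 0$ if $m \notin I$); write $m = v \cdot w$ where $v$ is a product of $k$ generators from $\mathcal G(I)$ (with repetition) and $w \in I^k \setminus I^{k+1}$ represents a nonzero element of $I^k/I^{k+1}$. Since $\gr(I)$ is a polynomial ring $(S/I)[T_1,\dots,T_r]$ with $T_i$ the image of $u_i$, and since $m u_1^{b_1}\cdots u_r^{b_r} \in I^n$ means the degree-$(k+b)$ component of the image of $m u_1^{b_1}\cdots u_r^{b_r}$ in $\gr(I)$ vanishes, the freeness of each graded piece $I^j/I^{j+1}$ over $S/I$ forces $k + b \ge n$, i.e. $k \ge n - b$, so $m \in I^{n-b}$. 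The key point being exploited is that multiplication by the monomial $u_1^{b_1}\cdots u_r^{b_r}$ corresponds in $\gr(I)$ to multiplication by the monomial $T_1^{b_1}\cdots T_r^{b_r}$, which is a nonzerodivisor on the polynomial ring $(S/I)[T_1,\dots,T_r]$; hence it cannot push a nonzero homogeneous element of degree $< n - b$ up into degree $\ge n$ while becoming zero, nor can it create a relation lowering the $I$-adic order below $n - b$.

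I expect the main obstacle to be making the $\gr(I)$-argument fully rigorous, specifically the claim that $m \in I^{n-b}$ follows from $m u_1^{b_1}\cdots u_r^{b_r} \in I^n$; a clean way to phrase it is: for a regular sequence, $I^n : u_1^{b_1}\cdots u_r^{b_r} = (I^n : u_1^{b_1}) : u_2^{b_2}) \cdots$, and then to handle a single factor $I^j : u_1^{c}$ by induction on $c$ using $I^j : u_1 = I^{j-1}$ for $j \ge 1$ (and $= S$ for $j = 0$), which is precisely the statement that $u_1$ is a nonzerodivisor on $S/I^{j-1}$ — this last fact is exactly what fact $(iii)(a),(b)$ and the freeness of $I^{j-1}/I^{j}$ over $S/I$ give us, since $u_1 \notin \bigcup_{\mathfrak p \in \Ass(S/I^{j-1})} \mathfrak p = \bigcup_{\mathfrak p \in \Ass(S/I)}\mathfrak p$ (as $\mathcal G(I)$ is a regular sequence, no $u_i$ lies in any associated prime of $S/I$). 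Iterating the one-factor computation $n - b = n - \sum b_i$ times altogether yields $(I^n : u_1^{b_1}\cdots u_r^{b_r}) = I^{n - \sum_{i=1}^r b_i}$, as claimed.
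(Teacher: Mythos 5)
Your overall strategy (reduce to iterated colons and prove the one-factor formula $(I^j:u_1)=I^{j-1}$) is sound, and the $\gr(I)$ idea can be made to work, but the justification you give for the crucial single-factor step is wrong as stated. First, the parenthetical claim that no $u_i$ lies in any associated prime of $S/I$ is false: $u_i\in I$ and every $\mathfrak p\in\Ass(S/I)$ contains $I$, so $u_i$ lies in \emph{every} associated prime of $S/I$, and indeed $u_1$ is a zerodivisor on $S/I^{j-1}$ for $j\geq 2$ (any $g\in I^{j-2}\setminus I^{j-1}$ satisfies $u_1g\in I^{j-1}$). Second, the identification of $(I^j:u_1)=I^{j-1}$ with ``$u_1$ is a nonzerodivisor on $S/I^{j-1}$'' is a category error: that nonzerodivisor statement would read $(I^{j-1}:u_1)=I^{j-1}$, which fails already for $I=(x)$, $j\geq 2$. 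So the ``clean way'' you propose as the rigorous version does not establish the reverse inclusion $(I^j:u_1)\subseteq I^{j-1}$.

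The step can be repaired in two ways. One is the route you gesture at earlier, phrased correctly: if $f\in I^k\setminus I^{k+1}$, its initial form $f^*$ is a nonzero element of $\gr(I)\cong (S/I)[T_1,\dots,T_r]$, and since $T_1$ is a nonzerodivisor on this polynomial ring, $f^*T_1\neq 0$ in $I^{k+1}/I^{k+2}$, i.e.\ $fu_1\notin I^{k+2}$; hence $fu_1\in I^j$ forces $k\geq j-1$. (Your decomposition $m=vw$ with $v$ a product of $k$ generators \emph{and} $w\in I^k\setminus I^{k+1}$ is incoherent — that would put $m\in I^{2k}$ — but it is also unnecessary: the class of $m$ itself suffices.) The other repair is the elementary argument the paper uses, which needs no $\gr(I)$ at all: for a monomial $f$ with $fu_i\in I^n$, write $fu_i=hu_1^{c_1}\cdots u_r^{c_r}$ with $\sum_j c_j=n$; since $\gcd(u_i,u_j)=1$ for $j\neq i$, $u_i$ divides $hu_i^{c_i}$, and one reads off directly that $f\in I^{n-1}$, after which the iteration proceeds exactly as in your last paragraph.
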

\begin{proof}
	We first show that $(I^n:u_i)=I^{n-1}$ for all $n\geq 1$ and $1\leq i\leq r$. Since $u_i\in I$, we have $I^{n-1}\subseteq (I^n:u_i)$ for all $1\leq i\leq r$. We show that $(I^n:u_i)\subseteq I^{n-1}$. Suppose $f\in (I^n:u_i)$, then $fu_i\in I^n$. Therefore $fu_i= h u_1^{c_1}\cdots u_r^{c_r}$ for some monomial $h\in S$ and $\sum\limits_{j=1}^r c_j=n$ with $c_j\geq 0$ for $1\leq j\leq r$.  Since $\gcd(u_i,u_j)=1$ for $i\neq j$, we have $u_i\mid h u_i^{c_i}$. If $c_i=0$, then $f=\displaystyle \frac{h}{u_i} u_1^{c_1}\cdots u_r^{c_r}\in I^n\subset I^{n-1}$.  If $c_i\geq 1$, then $f= h u_1^{c_1}\cdots u_i^{c_i-1}\cdots u_r^{c_r}\in I^{n-1}$.  Therefore $(I^n:u_i)=I^{n-1}$ for all $n\geq 1$ and $1\leq i\leq r$.  
	 Repeating this process $b_i$ times we get $(I^n: u_i^{b_i})=I^{n-b_i}$.
	Using this repeatedly, we get 
	$(I^n: u_1^{b_1}\cdots u_r^{b_r})=(I^{n-b_1}:u_2^{b_2}\cdots u_r^{b_r})=\cdots=(I^{(n-\sum\limits_{i=1}^{r-1} b_i)}:u_r^{b_r})=I^{n-\sum\limits_{i=1}^r b_i}.$
\end{proof}
In \cite[Proposition 3.10]{SS}, using polarization and \cite[Proposition 3.9]{JV}, Saha and Sengupta showed that if $I=\langle u_1,\ldots u_r\rangle$ is a complete intersection monomial ideal then $\v(I)=\sum\limits_{i=1}^r \deg u_i-r$.  In the following result, we provide an alternative proof of this by explicitly constructing a monomial $f$ satisfying $(I:f)=P$ and $\deg f=\v(I)$ that also shows that $\v(I)=\v_P(I)$ for all $P\in \Ass(I)$. The proof also provides a method to compute $\v$-numbers of primary ideals.
\begin{Proposition}{\label{idealcase}}
Let $I=\langle u_1,\ldots u_r\rangle$ be a  monomial ideal and $P=\langle y_1,\ldots,y_r\rangle$ be any associated prime of $I$ where $y_1,\ldots,y_r\in\{x_1,\ldots,x_m\}$. 
	\begin{enumerate}
\item[$(1)$] Suppose $I$ is a complete intersection ideal and $\displaystyle f={\prod\limits_{i=1}^r {u_i}}/{\prod\limits_{i=1}^r{y_i}}.$ Then $(I:f)=P$ and
$\v(I)=\v_P(I)=\sum\limits_{i=1}^r \deg u_i-r$.  
\item[$(2)$] Suppose $I$ is a $P$-primary monomial ideal and $I=\bigcap\limits_{i=1}^s Q_i$ is the unique irredundant decomposition of $I$ where $Q_i$ is a monomial ideal generated by pure powers of variables for all $1\leq i\leq s$. Then $\v(I)=\min\{\v(Q_i): 1\leq i\leq s\}$.
		\end{enumerate}
\end{Proposition}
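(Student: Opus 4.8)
The plan is to prove both parts by direct monomial computations with colon ideals via property $(P3)$, after first reducing to monomial witnesses: for a monomial ideal $I$ and a monomial prime $\mathfrak{p}\in\Ass(I)$, $\v_{\mathfrak{p}}(I)$ is attained at a monomial, because if $(I:g)=\mathfrak{p}$ and one expands $g$ as a $K$-linear combination of monomials $m_\alpha$, then $\mathfrak{p}=\bigcap_\alpha(I:m_\alpha)$ is a finite intersection of monomial ideals each containing $\mathfrak{p}$, so since $S/\mathfrak{p}$ is a domain some $(I:m_\alpha)$ equals $\mathfrak{p}$. For Part $(1)$ I would first observe that, since $I\subseteq P$, each $u_i$ lies in $P=\langle y_1,\dots,y_r\rangle$, and because $\gcd(u_i,u_j)=1$ the supports $\supp(u_1),\dots,\supp(u_r)$ are pairwise disjoint; comparing the $r$ pairwise-disjoint nonempty sets $\supp(u_i)\cap\{y_1,\dots,y_r\}$ with the $r$-element set $\{y_1,\dots,y_r\}$ forces each to be a singleton, so after relabeling $y_i\in\supp(u_i)$ and $f=\prod_iu_i/\prod_iy_i$ is a monomial of degree $\sum_i\deg u_i-r$. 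Writing $f=(u_i/y_i)\cdot\prod_{l\neq i}(u_l/y_l)$, the second factor is coprime to $u_i$, so $\gcd(u_i,f)=u_i/y_i$, and $(P3)$ gives $(I:f)=\langle u_i/\gcd(u_i,f):i\rangle=\langle y_1,\dots,y_r\rangle=P$; hence $\v(I)\le\v_P(I)\le\sum_i\deg u_i-r$.

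For the reverse inequality I would fix any $P'\in\Ass(I)$; since $I$ is a height-$r$ complete intersection, $P'=\langle z_1,\dots,z_r\rangle$ for distinct variables, and as before one may arrange $z_i\in\supp(u_i)$. Given a monomial $g$ with $(I:g)=P'$, set $v_i=u_i/\gcd(u_i,g)$, so $(I:g)=\langle v_1,\dots,v_r\rangle$ by $(P3)$. No $v_i$ equals $1$ (otherwise $u_i\mid g$, so $g\in I$ and $(I:g)=S$), and $\supp(v_i)\subseteq\supp(u_i)$, so the $v_i$ have pairwise disjoint supports; hence no $v_i$ divides another, $\{v_1,\dots,v_r\}=\mathcal{G}(P')=\{z_1,\dots,z_r\}$, and matching supports forces $v_i=z_i$ after relabeling. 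Thus $\gcd(u_i,g)=u_i/z_i$, so $u_i/z_i\mid g$ for every $i$, and since the $u_i/z_i$ again have pairwise disjoint supports, $\prod_i(u_i/z_i)\mid g$, giving $\deg g\ge\sum_i\deg u_i-r$. Therefore $\v_{P'}(I)\ge\sum_i\deg u_i-r$ for every $P'\in\Ass(I)$, which together with the previous paragraph yields $\v(I)=\v_P(I)=\sum_i\deg u_i-r$.

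For Part $(2)$ I would use that $I$ being $P$-primary gives $\Ass(I)=\{P\}$ and $\v(I)=\v_P(I)$, and that in the irredundant decomposition $I=\bigcap_{i=1}^sQ_i$ into ideals generated by pure powers of variables the occurring radicals $\sqrt{Q_i}$ are exactly the associated primes of $I$, hence all equal $P$; so $Q_i=\langle x_{j_1}^{a^{(i)}_1},\dots,x_{j_r}^{a^{(i)}_r}\rangle$ with $P=\langle x_{j_1},\dots,x_{j_r}\rangle$ and $a^{(i)}_k\ge1$. Each $Q_i$ is itself a complete intersection, so Part $(1)$ gives $\v(Q_i)=\v_P(Q_i)=\sum_ka^{(i)}_k-r$, attained at $g_i=\prod_kx_{j_k}^{a^{(i)}_k-1}$ with $(Q_i:g_i)=P$. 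For $\v(I)\ge\min_i\v(Q_i)$: take a monomial $g$ with $(I:g)=\bigcap_i(Q_i:g)=P\neq S$; then $g\notin Q_{i_0}$ for some $i_0$, so $(Q_{i_0}:g)$ is $P$-primary (a colon of a primary ideal by a non-member), hence $\subseteq P$, and $P=(I:g)\subseteq(Q_{i_0}:g)\subseteq P$ forces $(Q_{i_0}:g)=P$, so $\deg g\ge\v(Q_{i_0})\ge\min_i\v(Q_i)$. For $\v(I)\le\min_i\v(Q_i)$: choosing $j_0$ with $\v(Q_{j_0})$ minimal, for each $i\neq j_0$ irredundancy of the decomposition forbids $Q_i\subseteq Q_{j_0}$, i.e. $a^{(i)}_k\le a^{(j_0)}_k-1$ for some $k$, whence $x_{j_k}^{a^{(i)}_k}\mid g_{j_0}$, so $g_{j_0}\in Q_i$ and $(Q_i:g_{j_0})=S$; therefore $(I:g_{j_0})=\bigcap_i(Q_i:g_{j_0})=(Q_{j_0}:g_{j_0})=P$ and $\v(I)\le\deg g_{j_0}=\v(Q_{j_0})=\min_i\v(Q_i)$.

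The hard part will be the reverse inequality in Part $(1)$: one must exclude cheaper witnesses for every associated prime, and the crux is to leverage $\gcd(u_i,u_j)=1$ — equivalently, the pairwise disjointness of $\supp(u_1),\dots,\supp(u_r)$ — to force the minimal generators $u_i/\gcd(u_i,g)$ of $(I:g)$ to be exactly the variables generating the target prime, which rigidly pins down $\gcd(u_i,g)=u_i/z_i$ and hence $\prod_i(u_i/z_i)\mid g$. Once that rigidity is established, Part $(2)$ is essentially bookkeeping with the irreducible components of $I$, the only genuinely new point being that irredundancy of the decomposition forces $g_{j_0}\in Q_i$ for every $i\neq j_0$.
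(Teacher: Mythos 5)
Your proposal is correct and follows essentially the same route as the paper: the same witness $f=\prod_i u_i/\prod_i y_i$ with $(I:f)=P$ via $(P3)$, the same divisibility argument forcing $\prod_i(u_i/y_i)\mid g$ for any monomial witness $g$, and in part $(2)$ the same witness $g_{j_0}=\prod_k x_{j_k}^{a^{(j_0)}_k-1}$ killed in the other components by irredundancy. Your extra steps (reduction to monomial witnesses, justifying that one may take $y_i\in\supp(u_i)$, and using $Q_i\nsubseteq Q_{j_0}$ — which is the correct direction of the non-containment, stated with indices reversed in the paper) only make the argument more complete.
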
	
\begin{proof}
	$(1)$ Suppose $I$ is a complete intersection ideal.  Without loss of generality, we assume $y_i\in\supp(u_i)$ for all $1\leq i\leq r$. Since $\gcd(u_i,u_j)=1$ for all $i\neq j$, we have $\displaystyle\gcd(u_j,\prod\limits_{i=1}^r\frac{u_i}{y_i})=\frac{u_j}{y_j}$ and hence $\displaystyle \frac{u_j}{\gcd(u_j,f)}={y_j}$. Therefore by the property $(P3)$, we have $(I:f)=P.$ Therefore $\v_P(I)\leq \deg f=\sum\limits_{i=1}^r \deg u_i-r$.
	
	Let $g$ be a monomial such that $(I:g)=P$ and $\deg g=\v_P(I)$. Then for all $1\leq i\leq r$, 
	\begin{eqnarray*}
		g\in (I:P)=\bigcap\limits_{i=1}^r (I:y_i)&=&\bigcap\limits_{i=1}^r\langle u_1,\dots,u_{i-1},\frac{u_i}{y_i},{u_{i+1}}\dots,u_r\rangle\\&\subset& \langle u_1,\dots,u_{i-1},\frac{u_i}{y_i},{u_{i+1}}\dots,u_r\rangle.	\end{eqnarray*}
	Since $g\notin I$, we have $u_i\nmid g$ for all $1\leq i\leq r$. Therefore $\displaystyle \frac{u_i}{y_i}\mid g$ for all $1\leq i\leq r$. Since $\displaystyle \gcd(\frac{u_i}{y_i},\frac{u_j}{y_j})=1$ for all $i\neq j$, we have $f\mid g$. Therefore $\v_P(I)=\deg g\geq \deg f$. Hence for all $P\in\Ass(R/I)$, $\displaystyle\v(I)=\v_P(I)=\deg f=\sum\limits_{i=1}^r \deg u_i-r$. 
	
	$(2)$ By \cite[Proposition 6.1.7]{MAVill}, $Q_i=\langle y_1^{\alpha_{i_1}},\ldots,y_r^{\alpha_{i_r}}\rangle$ with $\alpha_{i_l}\geq 1$ for all $1\leq l\leq r$ and $1\leq i\leq s$ . Let $\v(Q_t)=\min\{\v(Q_i): 1\leq i\leq s\}$ and $g=\prod\limits_{j=1}^ry_j^{\alpha_{t_j}-1}$. Then by part $(1)$, we have $(Q_t:g)=P$ and $\v(Q_t)=\deg g$. Since $Q_t\nsubseteq Q_i$ for any $i\neq t$, there exists $j\in\{1,\ldots r\}$ such that $\alpha_{i_j}<{\alpha_{t_j}}$. Thus  $g\in Q_i$ and $(Q_i:g)=R$ for $i\neq t$. Therefore $(I:g)=P$ and $\v(I)\leq \deg g=\v(Q_t)$.
	
	Let $f\in S$ be a monomial such that $(I:f)=P$ and $\deg f=\v(I)$. Now $P=(I:f)=\bigcap\limits_{i=1}^s (Q_i:f)=(Q_j:f)$ for some $j\in\{1,\ldots,s\}$. Thus $\v(Q_t)\geq \v(I)=\deg f\geq \v(Q_j)\geq \v(Q_t)$. Therefore $\v(I)=\min\{\v(Q_i): 1\leq i\leq s\}$.
\end{proof}	
Now we prove the main result of this section.
\begin{Theorem}\label{ci}
	Let $I=\langle u_1,\ldots u_r\rangle$ be a complete intersection monomial ideal in $S$ and $P=\langle y_1,\dots,y_r\rangle$ be any associated prime of  $I$ where $y_1,\ldots,y_r\in\{x_1,\ldots,x_m\}$. Let $\displaystyle g={\prod\limits_{i=1}^r {u_i}}/{\prod\limits_{i=1}^r{y_i}}.$
	\begin{enumerate}
\item[$(1)$] If $m$ is a monomial such that $(I^n:m)=P$ then $\displaystyle g\mid m$ and $\displaystyle\deg{\frac{m}{g}}\geq (n-1)\alpha(I).$
\item[$(2)$] For all $n\geq 1$, $\v(I^n)=\v_P(I^n)=n\alpha(I)+\v(I)-\alpha(I)$ where $\v(I)=\sum\limits_{i=1}^r \deg u_i-r$.  
\end{enumerate}		
	\end{Theorem}	
\begin{proof}
	Without loss of generality, we assume that $\deg u_1=\alpha(I)$ and $y_i\in\supp(u_i)$ for all $1\leq i\leq r$. 
	
	$(1)$ Let $m$ be a monomial such that $(I^n:m)=P$. Fix any $i\in\{1,\ldots,r\}$. Then $my_i=h u_1^{b_1}\cdots u_r^{b_r}\in I^n$ for some monomial $h\in S$ with $\sum\limits_{j=1}^r b_j=n$. Since $(y_i,u_j)=1$ for $i\neq j$, $b_i=0$ implies  $\prod\limits_{j=1}^r u_j^{b_j}\mid m$ and hence $m\in I^n$ which contradicts that $(I^n:m)=P$. Therefore $b_i\geq 1$ and $\displaystyle  m=h \frac{u_i}{y_i} u_1^{b_1}\cdots u_i^{b_i-1}\cdots u_r^{b_r}$. Hence $\displaystyle \frac{u_i}{y_i}\mid m$. Thus $\displaystyle \frac{u_j}{y_j}\mid m$ for all $1\leq j\leq r$. Since $\displaystyle\gcd(\frac{u_i}{y_i},\frac{u_j}{y_j})=1$ for $i\neq j$, we have $g\mid m$. 
	
	Let $m=ug$ for some monomial $u\in S$. We show that $\deg u\geq (n-1)\alpha(I)$. Since $ug=m\in(I^n:y_1)$, by Lemma \ref{In:u=I}, $\displaystyle  u\prod\limits_{i=2}^r\frac{u_i}{y_i}\in((I^n:y_1):\frac{u_1}{y_1})=(I^n:u_1)=I^{n-1}.$ Hence $\displaystyle  u\prod\limits_{i=2}^r\frac{u_i}{y_i}= h' u_1^{c_1}\cdots u_r^{c_r}$ for some monomial $h'\in S$ with $\sum\limits_{i=1}^r c_i=n-1$. Again using Lemma \ref{In:u=I}, we get, 
\begin{eqnarray*}\displaystyle  P=(I^n:m)=(I^n: u\prod\limits_{i=1}^r\frac{u_i}{y_i})&=&(I^n:h' u_1^{c_1}\cdots u_r^{c_r}\frac{u_1}{y_1})\\&=&((I^n:u_1^{c_1}\cdots u_r^{c_r}):h'\frac{u_1}{y_1})=(I:h'\frac{u_1}{y_1}). \end{eqnarray*} Therefore by Proposition \ref{idealcase}, we get $\deg h' +\deg u_1-1\geq \v(I)=\sum\limits_{i=1}^r \deg u_i -r$. 
Since $\displaystyle  u\prod\limits_{i=2}^r\frac{u_i}{y_i}= h' u_1^{c_1}\cdots u_r^{c_r}$ with $\sum\limits_{i=1}^r c_i=n-1$ and $\deg u_i\geq \alpha(I)$ for all $1\leq i\leq r$, we get
	\begin{eqnarray*}
	\deg u&=&\deg h' +\sum\limits_{i=1}^r c_i\deg u_i -\sum\limits_{i=2}^r(\deg u_i -1)\\& \geq& \sum\limits_{i=1}^r \deg u_i -r-\deg u_1+1 +(n-1)\alpha(I)-\sum\limits_{i=2}^r\deg u_i +(r-1) \geq (n-1)\alpha(I).\end{eqnarray*}
	
	$(2)$ By Proposition \ref{idealcase} and part $(1)$  of the result, we get \begin{eqnarray*}\v_P(I^n)&\geq& (n-1)\alpha(I)+\deg(\prod\limits_{i=1}^r \frac{u_i}{y_i})=(n-1)\alpha(I)+\sum\limits_{i=1}^r\deg u_i -r\\&=&(n-1)\alpha(I)+\v(I)=n\alpha(I)+\v(I)-\alpha(I).\end{eqnarray*}
	
Let $\Ass(I)=\{P=P_1,\ldots,P_s\}$ and $Q_i$ is the corresponding $P_i$-primary component of $I$. Then 
 $I^{(n)}=I^n=\bigcap\limits_{i=1}^sQ_i^n$ for all $n\geq 1$. 	Let $u_i=x_{i_1}^{\alpha_{i_1}}\cdots x_{i_{t_i}}^{\alpha_{i_{t_i}}}$ for all $1\leq i\leq r$. Without loss of generality, we assume that $y_i=x_{i_1}$ for all $1\leq i\leq r$. Let $\beta_i$ denote $\alpha_{i_1}$ for all $1\leq 1\leq r$. Then $Q_1=\langle x_{i_1}^{\alpha_{i_1}}:1\leq i\leq r\rangle=\langle y_{1}^{\beta_{1}},\ldots,y_{r}^{\beta_{r}}\rangle$ is the corresponding $P=\langle x_{i_1}:1\leq i\leq r\rangle=\langle y_{1},\ldots,y_{r}\rangle$-primary component of $I$. 
\\ Let  $\displaystyle f=\prod\limits_{i=1}^rv_i$ where $v_1=u_1^n/y_{1}$ and $\displaystyle v_m=u_m/y_{m}$ for $2\leq m\leq r$. Then $\displaystyle f=f_1\prod\limits_{j=2}^{t_1}x_{1_j}^{n\alpha_{1_j}}\prod\limits_{m=2}^r\prod\limits_{j=2}^{t_m}x_{m_j}^{\alpha_{m_j}}$ where $f_1=y_1^{n\beta_1-1}\prod\limits_{i=2}^r y_i^{\beta_i-1}=y_1^{(n-1)\beta_1}\prod\limits_{i=1}^r y_i^{\beta_i-1}$. If $|\supp(u_s)|=1$ then we consider $\prod\limits_{j=2}^{t_s}x_{s_j}^{\alpha_{s_j}}=1$ for all $1\leq s\leq r$. 
\\Note that $f_1y_i\in\langle y_1^{(n-1)\beta_1}y_i^{\beta_i}\rangle\in Q_1^n$ for all $1\leq i\leq r$ and hence $P\subseteq (Q_1^n:f_1)$. Since every monomial $u\in\mathcal{G}(Q_1^n)$ is either of the form $u=y_{1}^{n\beta_{1}}$ or $y_i^{\beta_i}$ divides $u$ for some $2\leq i\leq r$, we have $u\nmid f_1$ for any $\displaystyle u\in\mathcal{G}(Q^n)$. Therefore $(Q_1^n:f_1)\neq S$ and $\Ass(S/(Q_1^n:f_1)=\Ass(S/Q_1^n)=\{P\}$.  Hence $P=(Q_1^n:f_1)$. Since $P=\langle y_{1},\ldots,y_{r}\rangle$, we have $$(Q_1^n:f)=((Q_1^n:f_1):\prod\limits_{j=2}^{t_1}x_{1_j}^{n\alpha_{1_j}}\prod\limits_{m=2}^r\prod\limits_{j=2}^{t_m}x_{m_j}^{\alpha_{m_j}})=(P:\prod\limits_{j=2}^{t_1}x_{1_j}^{n\alpha_{1_j}}\prod\limits_{m=2}^r\prod\limits_{j=2}^{t_m}x_{m_j}^{\alpha_{m_j}})=P.$$ 
 \\Now we show that $(Q_j^n:f)=S$ for all $j\neq 1$. 
 Consider a primary component $Q_j$ such that $j\neq 1$. Suppose $y_1^{\beta_1}\in Q_j$. Since $u_2,\ldots,u_r\in Q_j$, there exists $x_{p_i}^{\alpha_{p_i}}\in Q_j\setminus Q$ for some $2\leq i\leq t_p$ and $2\leq p\leq r$. Note that $y_{1}^{(n-1)\beta_{1}}x_{p_i}^{\alpha_{p_i}}\in Q_j^n$ and $y_{1}^{(n-1)\beta_{1}}x_{p_i}^{\alpha_{p_i}}$ divides $f$. Therefore $f\in Q_j^n$. Suppose $y_1^{\beta_1}\notin Q_j$. Then there exists $2\leq i\leq t_1$ such that $x_{1_i}^{\alpha_{1_i}}\in Q_j$. Note that $x_{1_i}^{n\alpha_{1_i}}\in Q_j^n$ and $x_{1_i}^{n\alpha_{1_i}}$ divides $f$. Therefore $f\in Q_j^n$. Thus $(I^n:f)=(Q_1^n:f)\bigcap (\bigcap\limits_{m=2}^r (Q_m^n:f))=P$ and 
$\v_P(I^n)\leq \deg f =n\deg u_1+\sum\limits_{m=2}^r \deg u_m-r=n\alpha(I)+\v(I)-\alpha(I).$
\end{proof}		
\section{$\v$-numbers of integral closures of complete intersection ideals}\label{section4}
In this section, we investigate the $\v$-numbers  associated to integral closure filtrations of complete intersection ideals and analyze their relationship with the Castelnuovo-Mumford regularity of those ideals. We start with the following result which holds for any homogeneous ideals and generalizes the result \cite[Corollary 3.8]{GSV}.
\begin{Lemma}{\label{G}}
	Let $I$ be a homogeneous ideal in $S$. Then $\displaystyle\lim\limits_{n\to\infty}\frac{\alpha(\overline{I^n})}{n}=\alpha(I)$.
	\end{Lemma}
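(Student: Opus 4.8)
The plan is to establish the two inequalities $\limsup_n \alpha(\overline{I^n})/n \le \alpha(I)$ and $\liminf_n \alpha(\overline{I^n})/n \ge \alpha(I)$ separately, and to deduce the existence of the limit along the way. For the first inequality, observe that $I^n \subseteq \overline{I^n}$ always forces $\alpha(\overline{I^n}) \le \alpha(I^n) \le n\,\alpha(I)$, which already gives $\limsup_n \alpha(\overline{I^n})/n \le \alpha(I)$; no deep input is needed here beyond the definition of $\alpha$ and subadditivity of $\alpha$ on products (which itself follows from $I^n \supseteq (I_{\alpha(I)})^n \ne 0$ in degree $n\alpha(I)$).

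For the reverse inequality I would exploit the standard fact that the integral closure filtration $\{\overline{I^n}\}_{n\ge 0}$ is a \emph{multiplicative} (Noetherian) filtration: $\overline{I^a}\cdot\overline{I^b}\subseteq \overline{I^{a+b}}$, which is an elementary consequence of the characterization of integral elements (see \cite[Corollary 1.3.1]{SH}). Consequently the function $a(n):=\alpha(\overline{I^n})$ is \emph{superadditive}: $a(p+q)\ge a(p)+a(q)$, because a nonzero element of minimal degree in $\overline{I^{p+q}}$ cannot have degree smaller than the sum of the minimal degrees contributed by a factorization — more carefully, $\overline{I^p}_{s}\cdot\overline{I^q}_{t}\subseteq \overline{I^{p+q}}_{s+t}$, so if both $\overline{I^p}$ and $\overline{I^q}$ are nonzero in degrees $a(p)$ and $a(q)$ then $\overline{I^{p+q}}$ is nonzero in degree $a(p)+a(q)$, whence $a(p+q)\le a(p)+a(q)$ — wait, that is the wrong direction, so instead one uses that any monomial generator of $\overline{I^{p+q}}$ of minimal degree lies in the Newton polyhedron $\np(I^{p+q}) = (p+q)\np(I)$, giving $a(n)\ge$ the minimum of $|v|$ over $v\in n\np(I)$; this minimum equals $n\,\alpha(I)$ exactly because the Newton polyhedron $\np(I)$ has its vertex set among the exponent vectors of generators and scaling by $n$ scales the minimal $\ell_1$-norm by $n$ over the \emph{rational} points, but $\overline{I^n}$ only sees lattice points. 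So the cleanest route is: by Fekete's lemma applied to the superadditive (in fact, one can also argue via subadditivity in the other normalization) sequence, $\lim_n a(n)/n$ exists and equals $\sup_n a(n)/n$; and since $a(n)\le n\alpha(I)$ this supremum is at most $\alpha(I)$, while $a(1)=\alpha(\overline I)\le\alpha(I)$ does not suffice — the key point is that the limit of $a(n)/n$ is the minimal $\ell_1$-norm over the rational polyhedron $\np(I)$, which is exactly $\alpha(I)$ since $\np(I)+\RR^m_{\ge 0}$ contains a lattice point of norm $\alpha(I)$ (a minimal generator) and no rational point of smaller norm.

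Concretely, the argument I would write is: for the lower bound, fix $n$ and a monomial $x^{\mathbf a}\in\overline{I^n}$ with $|\mathbf a|=\alpha(\overline{I^n})$. By \cite[Proposition 1.4.2]{SH} (and Remark~\ref{powers}-type reasoning) $\mathbf a\in\np(I^n)=n\,\np(I)$, so $\mathbf a/n\in\np(I)=\conv\{\mathbf b : x^{\mathbf b}\in\mathcal G(I)\}+\RR^m_{\ge0}$; hence $\mathbf a/n\ge \sum_j\lambda_j\mathbf b_j$ coordinatewise for some convex combination of generator exponents, giving $|\mathbf a|/n\ge \sum_j\lambda_j|\mathbf b_j|\ge \min_j|\mathbf b_j|\ge \alpha(I)$. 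Thus $\alpha(\overline{I^n})\ge n\,\alpha(I)$ for \emph{every} $n$ — combined with $\alpha(\overline{I^n})\le \alpha(I^n)=n\alpha(I)$ this in fact shows $\alpha(\overline{I^n})=n\,\alpha(I)$ on the nose, and the limit statement follows trivially. (The cited \cite[Corollary 3.8]{GSV} presumably treats the symbolic-power analogue where equality can fail, so here the "generalization" is that for integral closures one gets the clean identity.)

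The main obstacle is essentially bookkeeping rather than conceptual: one must be careful that $\np(I^n)=n\,\np(I)$ — this is where Remark~\ref{powers} and the standard identification $\overline{J}=\{x^{\mathbf a}:\mathbf a\in\np(J)\}$ for monomial $J$ enter — and, in the non-monomial case, that $\alpha(\overline{I^n})\ge n\alpha(I)$ must be argued without the Newton polyhedron, e.g. by passing to initial ideals or by the valuation-theoretic description of integral closure; but since the homogeneous statement reduces degree-wise to comparing $\overline{I^n}$ with the associated graded/Rees algebra, the inequality $\alpha(\overline{I^n})\ge \alpha(I^n)$ is false in general while $\alpha(\overline{I^n}) \le \alpha(I^n)$ is what we need for the $\le$ direction, and the $\ge$ direction for general homogeneous $I$ follows from the fact that the Rees algebra of $\{\overline{I^n}\}$ is a finitely generated graded subalgebra of the polynomial extension, so $\alpha$ of the filtration grows at least linearly with slope $\ge\alpha(I)$ — I would isolate this as the one step needing the structure theory of integral closures of the Rees algebra, and handle the monomial case (which is all that is used later in the paper) by the explicit Newton-polyhedron computation above.
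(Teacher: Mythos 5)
Your upper bound is fine, and your Newton--polyhedron argument correctly proves the lower bound (indeed the equality $\alpha(\overline{I^n})=n\alpha(I)$) when $I$ is a monomial ideal; note that this is a genuinely different route from the paper, which never computes anything directly but sandwiches the limit using the asymptotic theory of $\v$-numbers: by \cite{MRK} and \cite[Theorem 3.2]{AS} one has $\alpha(I)\le\lim_{n\to\infty}\v(\overline{I^n})/n=\lim_{n\to\infty}\alpha(\overline{I^n})/n\le\lim_{n\to\infty}n\alpha(I)/n=\alpha(I)$. Your approach, when complete, is both more elementary and stronger, since it gives the exact value $\alpha(\overline{I^n})=n\alpha(I)$ for every $n$ rather than only the limit.

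However, the lemma is stated for an arbitrary homogeneous ideal, and there your proof has a genuine gap. The step ``the Rees algebra of $\{\overline{I^n}\}$ is a finitely generated graded subalgebra, so $\alpha$ of the filtration grows at least linearly with slope $\ge\alpha(I)$'' is asserted, not proved: finite generation (itself a nontrivial input, via Rees' theorem for analytically unramified rings, cf.\ \cite{SH}) yields existence of the limit, but the lower bound on the slope needs a further argument such as $\overline{I^{n+1}}=I\,\overline{I^n}$ for $n\gg0$, which you do not supply; likewise your superadditivity detour goes the wrong way, as you noticed, since multiplicativity of the filtration gives \emph{sub}additivity of $\alpha(\overline{I^n})$ and hence only reproduces the upper bound. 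The gap closes with a short graded argument that covers all homogeneous ideals at once and makes the lemma immediate: if $f\in\overline{I^n}$ is homogeneous of degree $d<n\alpha(I)$ with an integral equation $f^m+a_1f^{m-1}+\cdots+a_m=0$, $a_i\in (I^n)^i=I^{ni}$, then comparing components of degree $md$ gives $f^m+\sum_{i=1}^m (a_i)_{id}\,f^{m-i}=0$, where $(a_i)_{id}$ is the degree-$id$ component of $a_i$ and lies in $I^{ni}$; since $I^{ni}$ is generated in degrees at least $ni\alpha(I)>id$, each $(a_i)_{id}=0$, so $f^m=0$, contradicting that $S$ is a domain. Hence $\alpha(\overline{I^n})=\alpha(I^n)=n\alpha(I)$ for all $n$, which is exactly your ``on the nose'' claim, now in full generality and without Newton polyhedra or Rees algebra machinery.
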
	
	\begin{proof}
Note that $\alpha(\overline{I^n})\leq \alpha({I^n})=n\alpha({I})$ for all $n\geq 1$. Now by \cite{MRK} and \cite[Theorem 3.2]{AS}, we have $$\displaystyle\alpha({I})\leq \lim\limits_{n\to\infty}\frac{\v(\overline{I^n})}{n}=\lim\limits_{n\to\infty}\frac{\alpha(\overline{I^n})}{n}\leq\lim\limits_{n\to\infty}\frac{n\alpha({I})}{n}=\alpha(I).$$
		\end{proof}
	\begin{Remark}\label{onecase}{\em
			\begin{enumerate}
				\item[$(1)$] Let $I=(a)$ be a nonzero monomial principal ideal in $S$. Then $I^m=(a^m)$ is integrally closed by \cite[Proposition 1.5.2]{SH} and $\Ass(I)=\MinAss(I)=\MinAss(I^m)=\Ass(I^m)$ for all $m\geq 1$ by \cite[Proposition 4.1.1]{SH}. Consider any $m\geq 1$ and  $P\in\Ass(I^m)$.  Then  by Theorem \ref{ci}, we have $\v_P(\overline{I^m})=\v_P(I^m)=m\deg a -1$ for all $m\geq 1$. Therefore all the ideals, we consider in this section, have height at least two.
				\item[$(2)$] Let $I=P+J$ be a monomial ideal in $S$ where $P$ is a prime monomial ideal and $\mathcal G(P)$, $\mathcal G(J)$ are the sets of monomials with a disjoint set of variables. Then by \cite[Theorem 2.1]{MT}, $\overline I=P+\overline J$. Further using \cite[Theorem 4.1]{FM}, we get $\v_{P+Q}(\overline I)=\v(P)+\v_Q(\overline  J)=\v_Q(\overline  J)$.
				\end{enumerate}
				}
			\end{Remark}
			We first prove a technical lemma which we use in our results
	\begin{Lemma}\label{ceil}
		If $A$, $c$ and $s$ are real numbers with $A\geq 1$, $-1< c\leq 0$ and $0\leq s\leq 1$. Then for all $L\in\mathbb N_{>0}$, the following hold.
		\begin{enumerate}
			\item[$(1)$] $\lceil(L+c)A-s\rceil>\lceil(L-1+c)A-s\rceil$
			\item[$(2)$] $\lceil(L+c)A-s\rceil\geq (L-t)+\lceil(c+t)A-s\rceil$ for all $1\leq t\leq L$.
		\end{enumerate}
	\end{Lemma}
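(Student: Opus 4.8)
The plan is to reduce both inequalities to a single elementary fact: for every real number $x$ and every nonnegative integer $n$ one has $\lceil x+n\rceil=\lceil x\rceil+n$, which, combined with the monotonicity of $\lceil\,\cdot\,\rceil$ and the hypothesis $A\geq 1$, already suffices. In fact neither assertion genuinely uses the bounds $-1<c\leq 0$ or $0\leq s\leq 1$; these ranges are recorded only because they are the ones in which the lemma is later invoked, and the estimates hold for arbitrary real $c$ and $s$.

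For $(1)$ I would set $x=(L-1+c)A-s$, so that $(L+c)A-s=x+A$. Since $A\geq 1$ and $\lceil\,\cdot\,\rceil$ is non-decreasing, $\lceil x+A\rceil\geq\lceil x+1\rceil=\lceil x\rceil+1>\lceil x\rceil$, which is precisely the claimed strict inequality $\lceil(L+c)A-s\rceil>\lceil(L-1+c)A-s\rceil$.

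For $(2)$, fix $1\leq t\leq L$ and put $y=(c+t)A-s$ and $n=L-t$; note that $n$ is a nonnegative integer because $L,t\in\ZZ$ with $t\leq L$. Then $(L+c)A-s=y+nA$, and $nA\geq n$ since $A\geq 1$ and $n\geq 0$. Applying monotonicity of the ceiling and then the integer-shift identity gives $\lceil(L+c)A-s\rceil=\lceil y+nA\rceil\geq\lceil y+n\rceil=\lceil y\rceil+n=(L-t)+\lceil(c+t)A-s\rceil$, as required.

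The only point requiring any care is the algebraic splitting of $(L+c)A-s$ so that the additive remainder ($A$ in the first part, $(L-t)A$ in the second) dominates an integer of the appropriate size, which is what licenses the integer-shift identity for the ceiling; beyond this bookkeeping there is no real obstacle.
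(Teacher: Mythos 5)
Your proof is correct. For part $(1)$ you argue exactly as the paper does in spirit (the difference of the two arguments is $A\geq 1$), but your formulation via $\lceil x+A\rceil\geq\lceil x+1\rceil=\lceil x\rceil+1$ is actually a bit more careful: the paper justifies $(1)$ by the strict inequality $(L+c)A-s>(L-1+c)A-s+1$, which degenerates to an equality when $A=1$, whereas your version covers that boundary case cleanly. For part $(2)$ your route is genuinely different: the paper proceeds by induction on $L$, gaining one unit at a time by repeated use of part $(1)$, while you get the inequality in one step by writing $(L+c)A-s=\bigl((c+t)A-s\bigr)+(L-t)A$, using $(L-t)A\geq L-t$ and the integer-shift identity $\lceil y+n\rceil=\lceil y\rceil+n$ for $n\in\mathbb{N}$. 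Your argument is shorter, avoids induction entirely, and makes transparent that the hypotheses $-1<c\leq 0$ and $0\leq s\leq 1$ play no role (they are only the ranges in which the lemma is later invoked); the paper's induction buys nothing beyond iterating the same unit-gap estimate. One small caveat, shared with the paper's proof: your argument for $(2)$ requires $t$ to be an integer so that $L-t\in\mathbb{N}$; this is implicit in the statement and is how the lemma is used, but it is worth stating explicitly, as you did.
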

	\begin{proof}
		$(i)$ Since $(L+c)A-s\geq(L-1+c)A-s+1$, we get the required result. 
\\$(ii)$ We use induction on $L$. If $L=1$, then $t=1$ and
		$\lceil(L+c)A-s\rceil=\lceil(1+c)A-s\rceil=(L-1)+\lceil(c+1)A-s\rceil$.
	Suppose $L\geq 2$ and $\lceil(j+c)A-s\rceil\geq (j-t)+\lceil(c+t)A-s\rceil$ for all $1\leq t\leq j$ and $1\leq j\leq L-1$. Then for all $1\leq t\leq j$, using part $(1)$ and the induction hypothesis, we get 
	\begin{eqnarray*}
		\lceil((j+1)+c)A-s\rceil &\geq& 1+ \lceil((j+1)-1+c)A-s\rceil=1+\lceil(j+c)A-s\rceil\\&\geq& 1+(j-t)+\lceil(c+t)A-s\rceil=((j+1)-t)+\lceil(c+t)A-s\rceil.
	\end{eqnarray*} If $t=j+1$ then $\lceil((j+1)+c)A-s\rceil =	((j+1)-(j+1))+\lceil(c+(j+1))A-s\rceil $. Thus we get the required result.
	\end{proof}
	In the next result, for an ideal $I$ generated by pure powers of variables, we analyze the monomials $f\in S$ satisfying $(\overline I:f)=\sqrt{I}$ 	and $\deg f=\v(\overline I)$.
	\begin{Proposition}
		Let $I=\langle x_{i_1}^{a_1},\dots, x_{i_r}^{a_r}\rangle$ be an ideal in $S$ generated by pure powers of variables with $2\leq a_1\leq a_2\leq\cdots\leq a_r$ and $f\in S$ be a monomial such that $(\overline{I}:f)=\langle x_{i_1},\dots, x_{i_r}\rangle$ and $\v(\overline{I})=\deg f$. Then there exists $g\in\mathcal{G}(\overline{I})$ such that $f={g}/{x_{i_r}}$.
	\end{Proposition}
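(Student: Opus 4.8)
The plan is to show that $g:=fx_{i_r}$ is itself a minimal generator of $\overline I$, which immediately gives $f=g/x_{i_r}$. Two things must be checked: that $f$ (hence $g$) involves only the variables $x_{i_1},\dots,x_{i_r}$, and that $g\in\mathcal G(\overline I)$. For the first I would argue as in the proof of Lemma \ref{least}: by Lemma \ref{membership}, membership of a monomial in $\overline I$ depends only on its exponents on $x_{i_1},\dots,x_{i_r}$, so deleting any other variable from a monomial of $\overline I$ keeps it in $\overline I$; hence no element of $\mathcal G(\overline I)$ involves a variable outside $\{x_{i_1},\dots,x_{i_r}\}$. Therefore, if some $x_j\notin\{x_{i_1},\dots,x_{i_r}\}$ divided $f$, then $(\overline I:x_j)=\overline I$ would give $(\overline I:(f/x_j))=(\overline I:f)=\langle x_{i_1},\dots,x_{i_r}\rangle$ with $\deg(f/x_j)<\deg f=\v(\overline I)$, a contradiction. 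So we may write $f=x_{i_1}^{c_1}\cdots x_{i_r}^{c_r}$.

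For the second point, from $(\overline I:f)=\langle x_{i_1},\dots,x_{i_r}\rangle$ one reads off $f\notin\overline I$ and $x_{i_r}f\in\overline I$, so by Lemma \ref{membership} we have $\sum_{l=1}^{r}c_l/a_l<1$ and $\sum_{l=1}^{r}c_l/a_l+1/a_r\ge 1$; the latter says $g=x_{i_1}^{c_1}\cdots x_{i_{r-1}}^{c_{r-1}}x_{i_r}^{c_r+1}\in\overline I$. To conclude $g\in\mathcal G(\overline I)$ it suffices to verify $g/x\notin\overline I$ for every variable $x$ dividing $g$. For $x=x_{i_r}$ this is just $f\notin\overline I$. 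For $x=x_{i_j}$ with $j\ne r$ and $c_j\ge 1$, Lemma \ref{membership} applied to $g/x_{i_j}$ would require $\sum_{l=1}^{r}c_l/a_l+1/a_r-1/a_j\ge 1$; but $a_j\le a_r$ gives $1/a_r-1/a_j\le 0$, so this sum is at most $\sum_{l=1}^{r}c_l/a_l<1$, whence $g/x_{i_j}\notin\overline I$. Thus $g\in\mathcal G(\overline I)$ and $f=g/x_{i_r}$, as claimed.

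I do not expect a real obstacle, as this is a preparatory statement, but the point requiring care is the role of the ordering $a_1\le\cdots\le a_r$ in the minimality check: since $x_{i_r}$ carries the largest exponent, multiplying $f$ by $x_{i_r}$ is the least efficient way to push $f$ into $\overline I$, and this is exactly what forces $fx_{i_r}$ to be a minimal generator — for a smaller index, one could in general still delete $x_{i_r}$ from $fx_{i_j}$ and stay inside $\overline I$, so $fx_{i_j}$ need not be minimal. The other mild subtlety is confining $\supp(f)$ to $\{x_{i_1},\dots,x_{i_r}\}$, which is precisely where the hypothesis $\deg f=\v(\overline I)$ enters.
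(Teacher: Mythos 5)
Your proof is correct, and its logical structure differs from the paper's in a way worth noting. The paper starts from $fx_{i_r}\in\overline I$, picks a minimal generator $g\in\mathcal G(\overline I)$ dividing $fx_{i_r}$, and forces $g=fx_{i_r}$ by contradiction: if some exponent of $g$ were strictly smaller than the corresponding exponent of $f$, one could pass to $f'=f/x_{i_j}$, use Lemma \ref{membership} together with $a_j\le a_r$ to check $(\overline I:f')=P$, and contradict $\deg f=\v(\overline I)$; the degree-minimality of $f$ is thus the engine of the main step. You instead verify directly that $g:=fx_{i_r}$ is a minimal generator via the standard criterion that $g\in\overline I$ while $g/x\notin\overline I$ for every variable $x$ dividing $g$, where the case $x=x_{i_j}$, $j\ne r$, is exactly the inequality $\sum_l c_l/a_l+1/a_r-1/a_j\le\sum_l c_l/a_l<1$; here the ordering $a_j\le a_r$ does the work, and the hypothesis $\v(\overline I)=\deg f$ is used only to confine $\supp(f)$ to $\{x_{i_1},\dots,x_{i_r}\}$ (a point the paper leaves implicit, in the spirit of Lemma \ref{least}). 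Both arguments rest on the same membership criterion and the same ordering of the exponents, and both in fact identify the generator as precisely $fx_{i_r}$; what your version buys is a slightly stronger and more transparent statement --- any monomial $f$ in the variables $x_{i_1},\dots,x_{i_r}$ with $f\notin\overline I$ and $fx_{i_r}\in\overline I$ already has $fx_{i_r}\in\mathcal G(\overline I)$ --- while the paper's version keeps the argument uniformly in the ``contradict $\v$-minimality'' style it uses elsewhere.
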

	\begin{proof}
		Let $P=\langle x_{i_1},\dots, x_{i_r}\rangle$.
		Without loss of generality, we assume that $x_{i_j}=x_j$ for all $1\leq j\leq r$. Let $f=\prod\limits_{i=1}^r x_i^{b_i}$. Since  $fx_r\in \overline I$, there exists $g=\prod\limits_{i=1}^r x_i^{c_i}\in\mathcal{G}(\overline{I})$ such that $g\mid fx_r$. We show that $c_i=b_i$ for all $1\leq i\leq r-1$ and $c_r= b_r+1$. Note that  $c_i\leq b_i$ for all $1\leq i\leq r-1$ and $c_r\leq b_r+1$. Suppose $c_j<b_j$ for some $1\leq j\leq r-1 $. Let $f'=\prod\limits_{i=1}^r x_i^{d_i}$ where $d_j=b_j-1$, and $d_i=b_i$ for all $1\leq i\leq r$ and $i\neq j$. Note that $g\mid f' x_r$ implies $f' x_r\in\overline I$. Therefore by Lemma \ref{membership}, we have $\displaystyle\sum\limits_{p=1}^r\frac{d_p}{a_p}+\frac{1}{a_i}\geq \sum\limits_{p=1}^r\frac{d_p}{a_p}+\frac{1}{a_r} \geq 1$ and hence $f' x_i\in\overline{I}$ for all $1\leq i\leq r$. Thus  $P\subset (\overline{I}:f')$. Since $f\notin\overline I$ and $f'$ divides $f$, we have   $f'\notin\overline{I}$. 
	Therefore $ \Ass(R/(\overline{I}:f'))=\Ass(R/\overline{I})=\{P\}$ and $(\overline I:f')=P$. Since $\deg f'<\deg f$, it contradicts that $\v(\overline I)=\deg f$. Hence $c_j=b_j$ for all $j=1,\ldots,r-1$.
		If $c_r<b_r+1$, then $c_r\leq b_r$. Then by Lemma \ref{membership},
		$\displaystyle\sum\limits_{i=1}^r \frac{b_i}{a_i}\geq\sum\limits_{i=1}^r \frac{c_i}{a_i}\geq 1$ and $f\in\overline I$ which contradicts that $(\overline I:f)\neq S$. Hence $c_r=b_r+1$ and $g=fx_r$. 
	\end{proof}
Our next objective is to explore the $\v$-numbers of the integral closures of irreducible monomial ideals.
		\begin{Theorem}\label{gen}
		Let $I=\langle x_{i_1}^{a_1},\ldots,x_{i_k}^{a_k}\rangle$ be a monomial ideal in $S$ such that $1\leq a_1\leq \cdots\leq a_k$ and $k\geq 2$. Then for all $n\geq 1$,
		\begin{enumerate}
			\item[$(1)$]   $\displaystyle n\alpha(I)
			+\lceil \frac{a_2}{\alpha(I)}-\frac{a_2}{\delta(I)}\rceil-1\leq \v(\overline {I^n})\leq n\alpha(I)+\lceil \delta(I)/\alpha(I)\rceil-2$ for all $n\geq 1$.
			\item[$(2)$] Suppose there exists $j\geq2$ such that $a_1=\cdots=a_{j-1}\leq a_j=\cdots=a_k$, then $\displaystyle\v(\overline{I^n})=n\alpha(I)+\lceil\frac{\delta(I)}{\alpha(I)}\rceil-2$.
			\item[$(3)$]  $\v(\overline {I^n})= n\alpha(I)+\lceil \delta(I)/\alpha(I)\rceil-2$ when $k=2$.
			\item[$(4)$]  
		\[\displaystyle 0\leq \reg(S/\overline {I^n})-\v(\overline {I^n})\leq \left\{
			\begin{array}{l l}
				~~~~~\displaystyle (\delta(I)-\alpha(I))n+\dim S/I& \quad \text{if $a_2=\delta(I)$ }\\ \vspace{0.3mm}\\
				\displaystyle (\delta(I)-\alpha(I))n+\dim S/I-1& \quad \text{if $a_2<\delta(I)$ . }\\ 
			\end{array} \right.\] 
			In particular, if $I$ is an equigenerated irreducible monomial ideal then for all $n\geq 1$, $\reg(S/\overline {I^n})=\v(\overline {I^n})=n\alpha(I)-1$. 
		\end{enumerate}
	\end{Theorem}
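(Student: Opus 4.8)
The whole theorem reduces to an elementary optimization over the exponents of a single monomial. Write $P=\langle x_{i_1},\dots,x_{i_k}\rangle$, $\alpha=\alpha(I)=a_1$, $\delta=\delta(I)=a_k$. By $(P4)$, $\overline{I^n}=\overline{\langle x_{i_1}^{na_1},\dots,x_{i_k}^{na_k}\rangle}$ is $P$-primary, so $\v(\overline{I^n})=\v_P(\overline{I^n})$, and by Lemma \ref{least} it is enough to consider monomials $f=\prod_{l=1}^{k}x_{i_l}^{b_l}$. Applying Lemma \ref{membership} to $\overline{\langle x_{i_1}^{na_1},\dots\rangle}$, I would first record the criterion: $(\overline{I^n}:f)=P$ if and only if $1-\tfrac{1}{na_k}\le\sum_{l=1}^{k}\tfrac{b_l}{na_l}<1$. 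Here the strict inequality says $f\notin\overline{I^n}$, the lower inequality says $x_{i_t}f\in\overline{I^n}$ for all $t$ with binding case $a_t=\delta$, and once both hold $(\overline{I^n}:f)$ is $P$-primary and contains $P$, hence equals $P$. From here both $\v(\overline{I^n})$ and $\reg(S/\overline{I^n})$ become extremal problems for $\deg f=\sum_l b_l$.

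For part (1), the upper bound comes from the explicit monomial $f=x_{i_1}^{na_1-1}\,x_{i_k}^{\lceil\delta/\alpha\rceil-1}$: using $\lceil\delta/\alpha\rceil-1<\delta/\alpha\le\lceil\delta/\alpha\rceil$ one checks its weight $\sum_l\tfrac{b_l}{na_l}$ lies in $[1-\tfrac1{na_k},1)$, and $\deg f=n\alpha+\lceil\delta/\alpha\rceil-2$. For the lower bound, any admissible $f$ has $b_1\le na_1-1$ (since $\tfrac{b_1}{na_1}<1$), hence $\sum_{l\ge2}\tfrac{b_l}{na_l}\ge\tfrac1{na_1}-\tfrac1{na_k}$, and since $a_l\ge a_2$ for $l\ge2$ this forces $\sum_{l\ge2}b_l\ge\tfrac{a_2}{\alpha}-\tfrac{a_2}{\delta}$. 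Plugging this, together with $\sum_l\tfrac{b_l}{na_l}\ge1-\tfrac1{na_k}$, into the identity $\deg f=na_1\sum_l\tfrac{b_l}{na_l}+\sum_{l\ge2}(1-\tfrac{a_1}{a_l})b_l$ yields $\deg f\ge n\alpha+\tfrac{a_2}{\alpha}-\tfrac{a_2}{\delta}-1$, so (as $\deg f\in\ZZ$) $\v(\overline{I^n})\ge n\alpha+\lceil\tfrac{a_2}{\alpha}-\tfrac{a_2}{\delta}\rceil-1$; note $\lceil\tfrac{a_2}{\alpha}-\tfrac{a_2}{\delta}\rceil+1\le\lceil\delta/\alpha\rceil$ since $a_2\le\delta$, so the two bounds are consistent.

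Parts (2) and (3) are the same computation with a sharper tail estimate. Under the hypothesis of (2) every generator has degree $\alpha$ or $\delta$; setting $B_1=\sum_{a_l=\alpha}b_l$, $B_2=\sum_{a_l=\delta}b_l$, the weight equals $\tfrac{B_1}{n\alpha}+\tfrac{B_2}{n\delta}\in[1-\tfrac1{n\delta},1)$, so $B_1\le n\alpha-1$ and $\tfrac{\delta}{\alpha}B_1+B_2\ge n\delta-1$; hence $\deg f=B_1+B_2\ge n\delta-1-B_1(\tfrac{\delta}{\alpha}-1)\ge n\delta-1-(n\alpha-1)(\tfrac{\delta}{\alpha}-1)=n\alpha+\tfrac{\delta}{\alpha}-2$, giving $\v(\overline{I^n})\ge n\alpha+\lceil\delta/\alpha\rceil-2$, which matches the upper bound from (1); (3) is the case of a single degree-$\alpha$ generator. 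For part (4), I would compute $\reg(S/\overline{I^n})=n\delta-1$ by passing to $K[x_{i_1},\dots,x_{i_k}]$ (which does not change the regularity), where $\overline{I^n}$ is Artinian and its regularity equals the largest degree of a monomial not in $\overline{I^n}$, namely $x_{i_k}^{na_k-1}$ by Lemma \ref{membership}. Then $\reg(S/\overline{I^n})-\v(\overline{I^n})=n\delta-1-\v(\overline{I^n})$, which is $\ge0$ because $\v(\overline{I^n})\le n\alpha+\lceil\delta/\alpha\rceil-2\le n\delta-1$ (using $\lceil\delta/\alpha\rceil\le\delta-\alpha+1\le n(\delta-\alpha)+1$), and is $\le n(\delta-\alpha)-\lceil\tfrac{a_2}{\alpha}-\tfrac{a_2}{\delta}\rceil$ by the lower bound on $\v$; since this ceiling is $\ge1$ when $a_2<\delta$ and $\ge0$ when $a_2=\delta$, and $\dim S/I\ge0$, the stated two-case bound follows, and when $I$ is equigenerated ($\alpha=\delta$) both sides collapse to $n\alpha-1$.

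The crux is the lower bound in (1) and its sharpening in (2): one has to hit on the weighting identity $\deg f=na_1\sum_l\tfrac{b_l}{na_l}+\sum_{l\ge2}(1-\tfrac{a_1}{a_l})b_l$ and on the auxiliary estimate $\sum_{l\ge2}b_l\ge\tfrac{a_2}{\alpha}-\tfrac{a_2}{\delta}$ (which is exactly where $b_1\le na_1-1$ enters), and then verify that after taking integer parts the constants land precisely on the stated ceilings; the dichotomy $a_2=\delta$ versus $a_2<\delta$ in (4) is just an artifact of this rounding.
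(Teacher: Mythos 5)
Your argument is correct, and it differs from the paper's proof in two genuine ways. First, instead of invoking the paper's ceiling estimate (Lemma \ref{ceil}) repeatedly to track integrality step by step, you package everything into the single ``weight window'' criterion $1-\tfrac{1}{n\delta(I)}\le\sum_l b_l/(na_l)<1$ (which is exactly the right characterization of $(\overline{I^n}:f)=P$, given that $\overline{I^n}$ is $P$-primary and Lemma \ref{least} restricts the support of $f$), solve the resulting linear problem over the reals, and round once at the end; since the constant $n\alpha(I)-1$ (resp.\ $n\alpha(I)-2$) is an integer, this recovers precisely the stated ceilings in (1) and (2), and (3) follows since the two-value hypothesis of (2) is automatic for $k=2$. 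The paper's upper-bound monomial $x_{i_1}^{na_1-1}x_{i_k}^{\lceil\delta/\alpha\rceil-1}$ is the same as yours. Second, in (4) you compute $\reg(S/\overline{I^n})=n\delta(I)-1$ exactly, by flat extension from $K[x_{i_1},\dots,x_{i_k}]$ and the fact that the regularity of an Artinian quotient is its top nonvanishing degree (the extremal standard monomial being $x_{i_k}^{na_k-1}$ by Lemma \ref{membership}); the paper instead only uses the two-sided bounds $\delta(I)n\le\reg(\overline{I^n})\le\delta(I)n+\dim S/I$ from Hoa and Grisalde--Seceleanu--Villarreal, plus normality of $P^{na}$ and a regularity formula of Beyarslan--H\`a--Trung for the equigenerated case. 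Your route is more self-contained and actually yields a sharper statement (the $\dim S/I$ slack in (4) disappears), which of course implies the stated inequalities. Two small points you should spell out when writing this up: in the lower bound of (1), ``plugging in'' $\sum_{l\ge2}b_l\ge \tfrac{a_2}{\alpha}-\tfrac{a_2}{\delta}$ into the weighted identity requires the observation that the coefficients $1-a_1/a_l$ are minimized at $l=2$, i.e.\ one bounds $\sum_{l\ge 2}(1-\tfrac{a_1}{a_l})b_l\ge(1-\tfrac{a_1}{a_2})\sum_{l\ge2}b_l$ before expanding; and in (2) the blocks $B_1,B_2$ should be defined by the index ranges $l<j$ and $l\ge j$ rather than by the values $a_l=\alpha$, $a_l=\delta$, so that no double counting occurs in the degenerate case $\alpha(I)=\delta(I)$ (where the same computation, or the trivial one-block argument, still gives $\deg f\ge n\alpha(I)-1$). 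With those clarifications the proof is complete.
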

	\begin{proof} 
		By Remark \ref{powers}, we have $\overline {I^n}=\overline{\langle x_{i_1}^{na_1},\ldots,x_{i_k}^{na_k}\rangle}$ for all $n\geq 1$. Note that $\alpha(I)=a_1$ and $\delta(I)=a_k$. 	Without loss of generality, we assume that $x_{i_j}=x_j$ for all $1\leq j\leq k$. Let $P=\langle x_1,\ldots,x_k\rangle$.
		
		$(1)$ Let $f=x_1^{b_1}\cdots x_k^{b_k}$ be a monomial such that $(\overline{I^n}:f)=P$ and $\v(\overline{I^n})=\deg f$. We show that $\v(\overline{I})\geq na_1+\lceil a_2/a_1-a_2/a_k\rceil-1$. Since $f\notin\overline{I^n}$, we have $b_1=na_1-m$ for some $1\leq m\leq na_1$. 	We have $k\geq 2$. Since $fx_j\in\overline{I^n}$, by Lemma \ref{membership}, we have $\displaystyle \frac{b_1}{na_1}+\sum\limits_{i=2}^k\frac{b_i}{na_i}+\frac{1}{na_j}\geq 1$  for all $2\leq j\leq k$.
		Therefore
		$$\displaystyle\frac{\sum\limits_{i=2}^k b_i}{na_2}\geq \sum\limits_{i=2}^k\frac{b_i}{na_i}\geq 1-\frac{b_1}{na_1}-\frac{1}{na_j}=1-\frac{na_1-m}{na_1}-\frac{1}{na_j}=\frac{m}{na_1}-\frac{1}{na_j}.$$
		 Taking $A=a_2/a_1$, $c=0$, $s=a_2/a_j$, $t=1$ and $L=m$  in Lemma \ref{ceil}, we get
		$$\sum\limits_{i=1}^k b_i\geq b_1+\sum\limits_{i=2}^k b_i\geq na_1-m+\lceil \frac{ma_2}{a_1}-\frac{a_2}{a_j}\rceil\geq na_1-1+\lceil \frac{a_2}{a_1}-\frac{a_2}{a_j}\rceil.$$ Hence $\displaystyle n\alpha(I)-1
		+\lceil \frac{a_2}{\alpha(I)}-\frac{a_2}{\delta(I)}\rceil= n\alpha(I)-1
		+\max\limits_{2\leq j\leq k}\{\lceil \frac{a_2}{\alpha(I)}-\frac{a_2}{a_j}\rceil\}\leq \v(\overline {I^n})$.
		\\Consider the monomial $\displaystyle g=\displaystyle x_1^{na_1-1}x_k^{\lceil\frac{a_k}{a_1}\rceil-1}$. We show that $(\overline{I^n}:g)=P$. Note that $\displaystyle\frac{na_1-1}{na_1}+\frac{1}{na_k}(\lceil\frac{a_k}{a_1}\rceil-1)< \frac{na_1-1}{na_1}+\frac{1}{na_k}(\frac{a_k}{a_1})\leq 1.$ Hence by Lemma \ref{membership}, $g\notin\overline{I^n}$ and $\Ass(R/(\overline{I^n}:g))=\Ass(R/\overline{I^n})=\{P\}$. Now we show that $gx_i\in\overline{I^n}$ for all $i=1,\ldots,k$ and thus $P\subset(\overline{I^n}:g)$. Note that for all $1\leq i\leq k$, 	\begin{eqnarray*}
			\frac{na_1-1}{n a_1}+\frac{1}{na_i}+\frac{1}{na_k}(\lceil\frac{a_k}{a_1}\rceil-1)&\geq& 1-\frac{1}{na_1}+\frac{1}{na_i}+\frac{1}{na_k}(\frac{a_k}{a_1}-1)\\&=&1+\frac{1}{na_i}-\frac{1}{na_k}\geq 1.\end{eqnarray*} 
			Thus by Lemma \ref{membership}, we have $gx_i\in\overline{I^n}$ for all $1\leq i\leq k$. Hence $(\overline{I^n}:g)=P$ and $\displaystyle\v(\overline{I^n})\leq \deg g= na_1+\lceil\frac{a_k}{a_1}\rceil-2$.
		\\$(2)$ By part $(1)$, we have $\v(\overline {I^n})\leq n\alpha(I)+\lceil \delta(I)/\alpha(I)\rceil-2$. Let $f=x_1^{b_1}\cdots x_r^{b_r}$ be a monomial such that $(\overline{I^n}:f)=P$ and $\v(\overline{I^n})=\deg f$.    
		If $\sum\limits_{i=1}^{j-1}b_i\geq na_1$, then $\displaystyle\frac{\sum\limits_{i=1}^{j-1}b_i}{na_1}\geq 1$ and hence by Lemma \ref{membership}, $f\in \overline{I^n}$, which is a contradiction. Therefore $\sum\limits_{i=1}^{j-1}b_i= na_1-m$ for some $1\leq m\leq na_1$. 
		Since $fx_j\in\overline{I^n}$, by Lemma \ref{membership}, we have $\displaystyle\frac{\sum\limits_{i=1}^{j-1}b_i}{na_1}+\frac{\sum\limits_{i=j}^{r}b_i+1}{na_j}\geq 1$ and hence
		$
		\displaystyle\frac{\sum\limits_{i=j}^{r}b_i+1}{na_j}\geq  1-\frac{(na_1-m)}{na_1}=\frac{m}{na_1}.
		$
		Therefore $\displaystyle{\sum\limits_{i=j}^{r}b_i+1}\geq\lceil\frac{ma_j}{a_1}\rceil$. Thus taking $A=a_j/a_1$, $c=0$, $s=0$, $t=1$ and $L=m$  in Lemma \ref{ceil}, we have 
		$ \displaystyle \sum\limits_{i=1}^r b_i \geq na_1-m+ \lceil\frac{ma_j}{a_1}\rceil-1  \geq na_1+\lceil\frac{a_j}{a_1}\rceil-2=na_1+\lceil\frac{\delta(I)}{\alpha(I)}\rceil-2.$
		\\$(3)$ It follows from $(2)$.
		\\$(4)$ By \cite[Theorem 2.7]{Hoa} and \cite[Lemma 7.3]{GSV}, we have $\delta(I)n\leq\displaystyle{\reg({\overline{I^n}})}\leq \delta(I)n+\dim S/I$. Let $\delta(I)=s\alpha(I)+t$ for some $0\leq t<\alpha(I)$. For all $n\geq 1$, using part $(1)$, we get 
		\begin{eqnarray*}\displaystyle\reg(S/\overline {I^n})-\v(\overline {I^n})&\geq& \delta(I)n-1-(n\alpha(I)+\lceil \delta(I)/\alpha(I)\rceil-2)\\&=&(\delta(I)-\alpha(I))n-\lceil \delta(I)/\alpha(I)\rceil+1\\&\geq &(\delta(I)-\alpha(I))-\lceil \delta(I)/\alpha(I)\rceil+1\\&=& (s-1)\alpha(I)+t-\lceil\frac{s\alpha(I)+t}{\alpha(I)}\rceil+1\\&=& (s-1)(\alpha(I)-1)+t-\lceil\frac{t}{\alpha(I)}\rceil\geq 0. \end{eqnarray*}
	Now for all $n\geq 1$, using part $(1)$, we get
	\begin{eqnarray*}\displaystyle\reg(S/\overline {I^n})-\v(\overline {I^n})&\leq& \delta(I)n+\dim S/I-1-(n\alpha(I)
	+\lceil \frac{a_2}{\alpha(I)}-\frac{a_2}{\delta(I)}\rceil-1)
	\\&=& (\delta(I)-\alpha(I))n+\dim S/I-\lceil \frac{a_2}{\alpha(I)}-\frac{a_2}{\delta(I)}\rceil.
	\end{eqnarray*}
	Note that $\displaystyle\lceil\frac{a_2}{\alpha(I)}-\frac{a_2}{\delta(I)}\rceil\geq \lceil 1-\frac{a_2}{\delta(I)}\rceil=1+\lceil -\frac{a_2}{\delta(I)}\rceil=1-\lfloor \frac{a_2}{\delta(I)}\rfloor$. If $a_2=\delta(I)$ then $\displaystyle\lfloor \frac{a_2}{\delta(I)}\rfloor=1$. If $a_2<\delta(I)$ then $\displaystyle\lfloor \frac{a_2}{\delta(I)}\rfloor=0$. Therefore we get the required result.
	\\Suppose $I$ is an equigenerated irreducible monomial ideal. Then by Remark \ref{powers}, $\overline {I^n}=\overline{\langle x_{1}^{na},\ldots,x_{k}^{na}\rangle}=\overline{\langle x_{1},\ldots,x_{k}\rangle^{na}}=\overline{P^{na}}$ where $a=a_1=\ldots=a_k$. Now by \cite[Proposition 1.4.4 and Theorem 1.4.6]{HH}, we have $\overline {I^n}=P^{na}$. Therefore by \cite[Lemma 4.4]{BHT} and  part $(2)$,  $\reg(S/\overline {I^n})=\reg(S/P^{na})=na-1=n\alpha(I)-1=\v(\overline {I^n})$ for all $n\geq 1$.
	\end{proof}
	Motivated by the above result, we compute the $\v$-numbers associated to the integral closure filtrations of the edge ideal of a weighted oriented graph whose underlying graph is a complete bipartite graph. For more details about weighted oriented graphs, see \cite{GMBV}. 
	\begin{Corollary}\label{WOG}
		Let $K_{p_1,p_2}$ be a complete bipartite graph with the vertex set $V=V_1\sqcup V_2$ where $V_1=\{x_1,\dots,x_{p_1}\}$ and $V_2=\{y_1,\dots,y_{p_2}\}$. Let $D=((V(D),E(D),w)$ be a weighted oriented graph whose underlying graph is $K_{p_1,p_2}$ such that $E(D)=\{(x_i,y_j)\mid 1\le i\le p_1,1\le j\le p_2\}$. Then 
		\begin{enumerate} 
		\item[$(1)$] $\v(\overline{I(D)^n})=n\alpha(I(D))-1$ for all $n\geq 1$. 
		\item[$(2)$]  If $K_{p_1,p_2}=K_{1,p}$ with $p\geq 3$ and  $1\leq w(y_i)\leq p-2$ for some $1\leq i\leq p$ then $$\v(\overline{I(D)})<\dim R/(\overline{I(D)})-\depth R/(\overline{I(D)})$$ where $R=K[x_1,y_1,\dots,y_{p}]$. In particular, if
	 $w(y_i)=1$ for some $1\leq i\leq p$ then $\v(\overline{I(D)})=1$ and  $\Big(\dim R/(\overline{I(D)})-\depth R/(\overline{I(D)})\Big)-\v(\overline{I(D)})= p-2$.
		\end{enumerate}
		
	\end{Corollary}	
	\begin{proof}
	Let $\beta_j=w(y_j)$ for all $1\le j\le p_2$. Note that the edge ideal of $D$ is $$I(D)=\langle x_iy_j^{\beta_j}\mid 1\le i\le p_1,1\le j\le p_2\rangle=\langle x_1,\dots,x_{p_2}\rangle \cap\langle y_1^{\beta_1},\dots,y_{p_2}^{\beta_{p_2}}\rangle.$$ Let $P=\langle x_1,\dots,x_{p_1}\rangle$, $J=\langle y_1^{\beta_1},\dots,y_{p_2}^{\beta_{p_2}}\rangle$ and $\sqrt{J}=Q=\langle y_1,\dots,y_{p_2}\rangle$. Since $P$ and $J$ are generated by monomials with a disjoint set of variables, for all $n\ge 1$, we have 
		$$I(D)^n=(P\cap J)^n=(PJ)^n=P^nJ^n=P^n\cap J^n=I(D)^{(n)}$$  where the last equality follows from \cite[Theorem 3.7]{CEHH}. 
		\\$(1)$ By \cite[Theorem 4.1]{GMBV}, we have $\overline{I(D)^n}=\overline{P^n}\cap\overline{J^n}=\overline{P^n}\overline{J^n}$ for all $n\geq 1$. Therefore by \cite[Theorem 2.2]{FM}, $\v(\overline{I(D)^n})=\min\{\v_P(\overline{P^n})+\alpha(\overline {J^n}),\v_Q(\overline{J^n})+\alpha(\overline {P^n})\}$ for all $n\geq 1$. Since by Remark \ref{powers},  $\overline{P^n}=\overline{\langle x_1^n,\dots,x_{p_2}^n\rangle}$ and $\overline{J^n}=\overline{\langle y_1^{n\beta_1},\dots, y_{p_2}^{n\beta_{p_2}}\rangle}$, we have $\alpha(\overline {P^n})=n$ and $\alpha(\overline {J^n})=n\alpha(J)$. Now by Theorem \ref{gen}, we have $\v_P(\overline{P^n})=n-1$ and $\v_P(\overline{J^n})\geq n\alpha(J)-1$. Hence $\v(\overline{I(D)^n})=n\alpha(J)+n-1=n(\alpha(J)+1)-1=n\alpha(I(D))-1$.
		\\$(2)$ Note that the underlying graph is $K_{1,p}$ for some $p\geq 3$ and $1\leq w(y_i)\leq p-2$ for some $1\leq i\leq p$. Now $\dim R/\overline{I(D)}=p$ and 
		$\depth R/\overline{I(D)}=1$. Therefore $$\v(\overline{I(D)})=\alpha(I(D))-1\leq (w(y_i)+1)-1\leq (p-1)-1<\dim R/\overline{I(D)}-\depth R/\overline{I(D)}.$$ 
		For the last part, we have $\alpha(I(D))=2$, $\v(\overline{I(D)})=\alpha(I(D))-1=1$ and we get the required result.
		 
		\end{proof}	
		Recently, a negative answer to \cite[Question 5.5]{SS} was obtained in \cite[Example 8.6]{KMT} using an ideal with a linear resolution. Based on the above result, we provide a negative answer to the same question by constructing ideals which are not necessarily equigenerated (and thus may not admit linear resolutions).
		\begin{Corollary}\label{negativeanser}
			For any integer $p\geq 3$, there exists a squarefree monomial ideal $I$ (not necessarily equigenerated) in a polynomial ring $R$ such that $$\big(\dim R/I-\depth R/I\big)-\v(I)=p-2.$$
		\end{Corollary}	
		\begin{proof}
Let $G$ denote a complete bipartite graph $K_{1,p}$ with $p\geq 3$. Consider the weighted oriented graph $D=((V(D),E(D),w)$ whose underlying graph is $G$ such that $E(D)=\{(x,y_i)\mid 1\le j\le p\}$ and $w(y_1)=1$. Let $J=\overline{I(D)}$ and $I=J^{\pol}$ (i.e., $I$ is the polarization of $J$). Let $T=K[x,y_1,\ldots,y_p]$ and $R$ be the polynomial ring obtaining from $T$ adjoining variables needed for polarization of $J$.
\\By \cite[Theorem 4.1]{F23} and the Corollary \ref{WOG}, we have $\v(I)=\v(J)=\v(\overline{I(D)})=1$. Since $\dim R/I-\depth R/I=\dim T/J-\depth T/J$ by \cite[Corollary 1.6.3]{HH}, we get the required result using Corollary \ref{WOG}
			\end{proof}
Based on Theorem \ref{gen}, we focus on explicit computation of $\v$-numbers of the integral closure filtrations of $\height 3$ irreducible monomial ideals. We begin by proving the following result.
	\begin{Proposition}\label{monomialofvint}
		Let $I=\langle x_{i_1}^{a_1},x_{i_2}^{a_2},x_{i_3}^{a_3}\rangle$ be a monomial ideal in $S$ such that $1\leq a_1\leq a_2\leq a_3$. For all $1\leq m\leq a_1$, consider the monomials, $$\displaystyle {\huge {f_{m}=x_{i_1}^{a_1-m}x_{i_2}^{\lceil\frac{ma_2}{a_1}\rceil-1}x_{i_3}^{\lceil\frac{a_3}{a_2}(\frac{ma_2}{a_1}-\lceil\frac{ma_2}{a_1}\rceil+1)\rceil-1}\in S}}.$$ Then $(\overline{I}:f_m)=\langle x_{i_1},x_{i_2},x_{i_3}\rangle$ for all $1\leq m\leq a_1$. 
	\end{Proposition}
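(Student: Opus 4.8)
Write $P=\langle x_{i_1},x_{i_2},x_{i_3}\rangle$ and abbreviate the exponents of $f_m$ as $p=a_1-m$, $q=\lceil ma_2/a_1\rceil-1$ and $r=\lceil\frac{a_3}{a_2}(\frac{ma_2}{a_1}-\lceil\frac{ma_2}{a_1}\rceil+1)\rceil-1$, so that $f_m=x_{i_1}^{p}x_{i_2}^{q}x_{i_3}^{r}$. The plan is to read everything off from the membership criterion of Lemma \ref{membership} applied to the ideal $I$, which is generated by pure powers of variables. The bookkeeping is organised around the single quantity $s:=\frac{ma_2}{a_1}-\lceil\frac{ma_2}{a_1}\rceil+1$, which lies in $(0,1]$ (it equals $1$ when $a_1\mid ma_2$ and equals the fractional part $\{ma_2/a_1\}$ otherwise). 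The point of introducing $s$ is the identity $\lceil ma_2/a_1\rceil-1=\frac{ma_2}{a_1}-s$, which yields $\frac{q}{a_2}=\frac{m}{a_1}-\frac{s}{a_2}$ and hence the crucial normalization $\frac{p}{a_1}+\frac{q}{a_2}=1-\frac{s}{a_2}$; also $r=\lceil\frac{a_3}{a_2}s\rceil-1$, so $\frac{a_3}{a_2}s>0$ forces $r\ge 0$, while $p,q\ge 0$ are immediate from $m\le a_1$ and $ma_2/a_1>0$.

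The first step is to show $f_m\notin\overline I$. By the identity above, $\frac{p}{a_1}+\frac{q}{a_2}+\frac{r}{a_3}=1-\frac{s}{a_2}+\frac{r}{a_3}$, and since $r=\lceil\frac{a_3 s}{a_2}\rceil-1<\frac{a_3 s}{a_2}$ we get $\frac{r}{a_3}<\frac{s}{a_2}$, so the total is strictly less than $1$; by Lemma \ref{membership}, $f_m\notin\overline I$. In particular $(\overline I:f_m)\neq S$, and since $\overline I$ is $P$-primary by $(P4)$, we have $\Ass(S/(\overline I:f_m))=\Ass(S/\overline I)=\{P\}$, so $(\overline I:f_m)$ is $P$-primary and therefore $(\overline I:f_m)\subseteq\sqrt{(\overline I:f_m)}=P$.

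The second step is the reverse inclusion, for which it suffices to verify $x_{i_j}f_m\in\overline I$ for $j=1,2,3$, giving $P\subseteq(\overline I:f_m)$ and hence equality. By Lemma \ref{membership} each of these amounts to an elementary inequality obtained by raising one exponent of $f_m$ by $1$. For $j=1$, the condition $\frac{p+1}{a_1}+\frac{q}{a_2}+\frac{r}{a_3}\ge 1$ reduces, using $\frac{p}{a_1}+\frac{q}{a_2}=1-\frac{s}{a_2}$, to $\frac{1}{a_1}+\frac{r}{a_3}\ge\frac{s}{a_2}$, which follows from $r\ge\frac{a_3 s}{a_2}-1$ together with $a_1\le a_3$. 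For $j=2$, the condition reduces to $\frac{1-s}{a_2}+\frac{r}{a_3}\ge 0$, which is immediate from $s\le 1$ and $r\ge 0$. For $j=3$, it reduces to $\frac{r+1}{a_3}\ge\frac{s}{a_2}$, which is immediate from $r+1=\lceil\frac{a_3 s}{a_2}\rceil\ge\frac{a_3 s}{a_2}$. Combined with the first step, this gives $(\overline I:f_m)=P$.

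I do not expect a genuine obstacle here. The only delicate point is the choice of substitution: expressing the third exponent through $s\in(0,1]$ is precisely what makes the nested ceilings telescope, and one must keep the inequality in the first step strict (this is what guarantees $f_m\notin\overline I$, hence that the colon is a proper ideal), whereas only non-strict estimates are needed for the three membership checks. Everything else is a direct application of Lemma \ref{membership} together with the standard fact, recorded in $(P4)$, that a colon $(\mathfrak q:f)$ of a $P$-primary monomial ideal by a monomial $f\notin\mathfrak q$ is again $P$-primary.
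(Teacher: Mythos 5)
Your proposal is correct and follows essentially the same route as the paper: both verify $x_{i_j}f_m\in\overline I$ for $j=1,2,3$ and $f_m\notin\overline I$ via the fractional membership criterion of Lemma \ref{membership}, then conclude $(\overline I:f_m)=P$ from the fact that $\overline I$ is $P$-primary. Your substitution $s=\frac{ma_2}{a_1}-\lceil\frac{ma_2}{a_1}\rceil+1$ is only a tidier bookkeeping of the same estimates (the paper bounds the ceilings directly), so there is nothing substantive to add or correct.
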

	\begin{proof}
		Let $P=\langle x_{i_1},x_{i_2},x_{i_3}\rangle$. Without loss of generality, assume that $x_{i_j}=x_j$ for all $1\leq j\leq 3$.  We first show that for all $1\leq i\leq 3$ and $1\leq m\leq a_1$, $f_m x_i\in\overline{I}$. 
		\\Note that $\displaystyle0<\frac{ma_2}{a_1}-\lceil\frac{ma_2}{a_1}\rceil+1$ and  $\displaystyle 1\leq \lceil\frac{a_3}{a_2}(\frac{ma_2}{a_1}-\lceil\frac{ma_2}{a_1}\rceil+1)\rceil$. Thus for $i=1,2$,   \begin{eqnarray*}
			&& \frac{a_1-m}{a_1}+\frac{1}{a_2}(\lceil\frac{ma_2}{a_1}\rceil-1)+\frac{1}{a_i}+\frac{1}{a_3}(\lceil\frac{a_3}{a_2}(\frac{ma_2}{a_1}-\lceil\frac{ma_2}{a_1}\rceil+1)\rceil-1)\\ &\geq& 1-\frac{m}{a_1}+\frac{1}{a_2}(\frac{ma_2}{a_1})-\frac{1}{a_2}+\frac{1}{a_i}= 1-\frac{1}{a_2}+\frac{1}{a_i}\geq 1
		\end{eqnarray*} and for $i=3$, 
		\begin{eqnarray*}
			&& \frac{a_1-m}{a_1}+\frac{1}{a_2}(\lceil\frac{ma_2}{a_1}\rceil-1)+\frac{1}{a_3}(\lceil\frac{a_3}{a_2}(\frac{ma_2}{a_1}-\lceil\frac{ma_2}{a_1}\rceil+1)\rceil-1)+\frac{1}{a_3}\\ &\geq& 1-\frac{m}{a_1}+\frac{1}{a_2}(\lceil\frac{ma_2}{a_1}\rceil-1)+\frac{1}{a_3}(\frac{a_3}{a_2}(\frac{ma_2}{a_1}-\lceil\frac{ma_2}{a_1}\rceil+1))-\frac{1}{a_3}+\frac{1}{a_3}\\&=& 1-\frac{1}{a_2}(\frac{ma_2}{a_1}-\lceil\frac{ma_2}{a_1}\rceil+1)+\frac{1}{a_2}(\frac{ma_2}{a_1}-\lceil\frac{ma_2}{a_1}\rceil+1)=1. 
		\end{eqnarray*}
	Therefore by Lemma \ref{membership}, we have $P\subseteq (\overline{I}:f_m)$ for all $1\leq m\leq a_1$.
	\\Now we show that $f_m\notin \overline{I}$ for all $1\leq m\leq a_1$. Since for any real number $x$, we have $\lceil{x}\rceil-1<x$, we get \begin{eqnarray*}
			&& \frac{a_1-m}{a_1}+\frac{1}{a_2}(\lceil\frac{ma_2}{a_1}\rceil-1)+\frac{1}{a_3}(\lceil\frac{a_3}{a_2}(\frac{ma_2}{a_1}-\lceil\frac{ma_2}{a_1}\rceil+1)\rceil-1)\\ &<& 1-\frac{m}{a_1}+\frac{1}{a_2}(\lceil\frac{ma_2}{a_1}\rceil-1)+\frac{1}{a_3}(\frac{a_3}{a_2}(\frac{ma_2}{a_1}-\lceil\frac{ma_2}{a_1}\rceil+1))\\&=& 1-\frac{1}{a_2}(\frac{ma_2}{a_1}-\lceil\frac{ma_2}{a_1}\rceil+1)+\frac{1}{a_2}(\frac{ma_2}{a_1}-\lceil\frac{ma_2}{a_1}\rceil+1)=1.
		\end{eqnarray*} 
		By Lemma \ref{membership}, we have $f_m\notin\overline{I}$ for all $1\leq m\leq a_1$. Thus $\Ass(R/(\overline{I}:f_m))=\Ass(R/\overline{I})=\{P\}$ and hence $(\overline{I}:f_m)=P$ for all $1\leq m\leq a_1$.
	\end{proof}
	\begin{Theorem}\label{threegen}
		Let $I=\langle x_{i_1}^{a_1},x_{i_2}^{a_2},x_{i_3}^{a_3}\rangle$ be a monomial ideal in $S$ with $1\leq a_1\leq a_2\leq a_3$. 
		\begin{enumerate}
			\item[$(1)$] $\v(\overline{I})=\min\limits_{1\leq m\leq a_1} \deg f_m$.
			\item[$(2)$] If $a_1=1$ then $\v(\overline{I^n})=n+a_2+\lceil\frac{a_3}{a_2}\rceil-3$ for all $n\geq 1$. 
	\item[$(3)$] Let $a_1\geq 2$.
	\begin{enumerate}
		\item[$(i)$] $\displaystyle\v(\overline{I})=\min\limits_{1\leq m\leq a_1-1} \deg f_m$.
		\item[$(ii)$] Let $\v(\overline{I})=\deg f_l$ for some $1\leq l\leq a_1-1$. Then for all $n\geq 1$, $$\displaystyle\v(\overline{I^n})=(n-1)a_1+\deg f_l=na_1-l+\lceil\frac{la_2}{a_1}\rceil+\lceil\frac{a_3}{a_2}(\frac{l a_2}{a_1}-\lceil\frac{la_2}{a_1}\rceil+1)\rceil-2.$$
		\item[$(iii)$] If $a_2\equiv0~(\mod a_1)$ or $a_2\equiv1~(\mod a_1)$, then $\v(\overline{I})=\deg f_1$ and for all $n\geq 1$,
		$$\displaystyle\v(\overline{I^n})=(n-1)a_1+\deg f_1=na_1+\lceil\frac{a_2}{a_1}\rceil+\lceil\frac{a_3}{a_2}(\frac{ a_2}{a_1}-\lceil\frac{a_2}{a_1}\rceil+1)\rceil-3.$$
	\item[$(iv)$] $\displaystyle\lceil a_2/a_1\rceil-1\leq \lceil a_2/a_1-a_2/a_3\rceil\leq \lceil a_2/a_1\rceil$ and if $\displaystyle\lceil a_2/a_1\rceil-1=\lceil a_2/a_1-a_2/a_3\rceil$ then for all $n\geq 1$, $\displaystyle\v(\overline{I^n})=na_1+\lceil a_2/a_1\rceil-2.$  
	\item[$(v)$] If $\displaystyle\lceil\frac{(a_1-1)a_3}{a_1a_2}\rceil=1$ then $\displaystyle\v(\overline{I^n})=(n-1)a_1+\deg f_1$ for all $n\geq 1$.
\end{enumerate}	
	\end{enumerate}
	Here $f_m$ are the monomials in Proposition \ref{monomialofvint}.
	\end{Theorem}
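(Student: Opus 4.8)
\textbf{Proof strategy for Theorem \ref{threegen}.}
The overall plan is to reduce every statement to the combinatorial description of $\overline{I^n}$ via Remark \ref{powers} and Lemma \ref{membership}: a monomial $x_1^{b_1}x_2^{b_2}x_3^{b_3}$ lies in $\overline{I^n}$ exactly when $\tfrac{b_1}{na_1}+\tfrac{b_2}{na_2}+\tfrac{b_3}{na_3}\ge 1$. Together with the fact that $\overline{I^n}$ is $P$-primary for $P=\langle x_{i_1},x_{i_2},x_{i_3}\rangle$ (property $(P4)$), computing $\v(\overline{I^n})$ amounts to minimizing $b_1+b_2+b_3$ over all triples $(b_1,b_2,b_3)$ with $x^{\mathbf b}\notin\overline{I^n}$ but $x^{\mathbf b}x_j\in\overline{I^n}$ for $j=1,2,3$. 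For part $(1)$, Proposition \ref{monomialofvint} already produces the candidate monomials $f_m$ with $(\overline I:f_m)=P$, so $\v(\overline I)\le\min_{1\le m\le a_1}\deg f_m$; the reverse inequality is obtained by taking an arbitrary optimal $f=x_1^{b_1}x_2^{b_2}x_3^{b_3}$, setting $m:=a_1-b_1$ (so $1\le m\le a_1$ since $f\notin\overline I$), and showing via the membership inequalities for $fx_2$ and $fx_3$ that $b_2\ge\lceil ma_2/a_1\rceil-1$ and $b_3\ge\lceil\tfrac{a_3}{a_2}(\tfrac{ma_2}{a_1}-\lceil\tfrac{ma_2}{a_1}\rceil+1)\rceil-1$, forcing $\deg f\ge\deg f_m$.

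For the power statements, the key structural tool is a "descent to the ideal'' argument parallel to Theorem \ref{ci}: if $f$ achieves $\v(\overline{I^n})$, I would show $x_1^{(n-1)a_1}\mid f$ when $a_1\ge 2$ (equivalently $g_1:=x_1^{(n-1)a_1}$ divides $f$), so that $f=x_1^{(n-1)a_1}f'$ with $(\overline I:f')=P$, giving $\v(\overline{I^n})\ge(n-1)a_1+\v(\overline I)$; the matching upper bound comes from multiplying the optimal $f_l$ for $\overline I$ by $x_1^{(n-1)a_1}$ and checking membership, which is the same chain of inequalities as in Proposition \ref{monomialofvint} scaled by $n$. The divisibility claim $x_1^{(n-1)a_1}\mid f$ is the technical heart: writing $b_1=na_1-m$ with $1\le m\le na_1$, one shows that if $m>a_1$ then one can decrease $b_1$ by $a_1$ and increase $b_2$ appropriately (using $a_1\le a_2$) to obtain a monomial of strictly smaller degree still satisfying the colon condition — essentially Lemma \ref{ceil}(2) with $t$ chosen to strip off the excess. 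Part $(2)$ ($a_1=1$) is the degenerate case where $\overline{I^n}=\overline{\langle x_1^n,x_2^{na_2},x_3^{na_3}\rangle}$ and the variable $x_1$ behaves like a linear form, so one computes directly: the optimal exponent on $x_1$ is $n-1$ and the problem reduces to the two-variable analysis of Theorem \ref{gen}$(3)$ applied to $\langle x_2^{a_2},x_3^{a_3}\rangle$, yielding $n-1+a_2+\lceil a_3/a_2\rceil-2$.

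Parts $(3)(iii)$–$(v)$ are then corollaries of $(3)(ii)$: one only needs to identify when $\min_{1\le m\le a_1-1}\deg f_m$ is attained at $m=1$. For $(iii)$, when $a_2\equiv 0$ or $1\pmod{a_1}$ I would show $\deg f_m$ is (weakly) increasing in $m$ by comparing $\deg f_{m+1}-\deg f_m$ and using that the fractional part $\tfrac{ma_2}{a_1}-\lceil\tfrac{ma_2}{a_1}\rceil+1$ stays in a controlled range; in these congruence cases the $x_3$-exponent does not decrease enough to offset the drop in the $x_1$-exponent. For $(v)$, the hypothesis $\lceil\tfrac{(a_1-1)a_3}{a_1a_2}\rceil=1$ forces the $x_3$-exponent in every $f_m$ ($1\le m\le a_1-1$) to equal $0$ after the $-1$, so $\deg f_m=a_1-m+\lceil ma_2/a_1\rceil-1$, which is minimized at $m=1$ by Lemma \ref{ceil}(1). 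For $(iv)$, the displayed inequality on $\lceil a_2/a_1-a_2/a_3\rceil$ is elementary, and under the stated equality case one checks that $\deg f_1=a_1+\lceil a_2/a_1\rceil-2$ coincides with the general lower bound from Theorem \ref{gen}$(1)$, pinning down the value. The main obstacle throughout is the divisibility/descent step establishing $x_1^{(n-1)a_1}\mid f$ for an optimal $f$ — every later formula is bookkeeping once that reduction to the $n=1$ case is in hand, but that step requires carefully exploiting $a_1\le a_2\le a_3$ together with the sharp form of Lemma \ref{ceil} to rule out monomials that load too much weight onto $x_1$.
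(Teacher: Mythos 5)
Your overall architecture is the paper's: reduce everything to the membership criterion of Lemma \ref{membership} applied to $\overline{I^n}=\overline{\langle x_{i_1}^{na_1},x_{i_2}^{na_2},x_{i_3}^{na_3}\rangle}$, use the candidate monomials $f_m$ of Proposition \ref{monomialofvint} for upper bounds, and use Lemma \ref{ceil} for the comparisons. However, two of your key steps are wrong as stated. First, in part $(1)$ the membership of $fx_2$ and $fx_3$ does \emph{not} force the separate inequalities $b_2\ge\lceil ma_2/a_1\rceil-1$ and $b_3\ge\lceil\tfrac{a_3}{a_2}(\tfrac{ma_2}{a_1}-\lceil\tfrac{ma_2}{a_1}\rceil+1)\rceil-1$: a deficit in $b_2$ can be compensated by a surplus in $b_3$. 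For instance, with $a_1=a_2=a_3=a$ and $m\ge 2$, the monomial $f=x_1^{a-m}x_3^{m-1}$ satisfies $(\overline I:f)=P$ with $b_2=0<\lceil ma_2/a_1\rceil-1$. The correct argument must bound $b_2+b_3$ \emph{jointly}: write $b_2=\lceil ma_2/a_1\rceil-l$ with $l\ge 1$, use $fx_3\in\overline I$ to get $b_3\ge\lceil\tfrac{a_3}{a_2}(\tfrac{ma_2}{a_1}-\lceil\tfrac{ma_2}{a_1}\rceil+l)\rceil-1$, and then apply Lemma \ref{ceil} (with $L=l$, $t=1$, $A=a_3/a_2$) to see the total is minimized at $l=1$; this trade-off is exactly what your sketch omits, and the conclusion $\deg f\ge\deg f_m$ only comes out of it.

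Second, your ``descent'' step for powers is false in the strong form you state. It is not true that every optimal $f$ for $\overline{I^n}$ is divisible by $x_1^{(n-1)a_1}$: for $a_1=a_2=a_3=a\ge 2$ one has $\overline{I^n}=P^{na}$ and $f=x_2^{na-1}$ realizes $\v(\overline{I^n})$ with $b_1=0$; consequently no argument can produce a \emph{strictly} smaller valid monomial when $m>a_1$, and your proposed modification (decrease $b_1$ by $a_1$, increase $b_2$) in any case cannot lower the degree, since compensating the lost weight $\tfrac{1}{n}$ through $x_2$ costs at least $a_2\ge a_1$. What is true, and what must replace your claim, is the non-strict statement: for the scaled ideal $\langle x_1^{na_1},x_2^{na_2},x_3^{na_3}\rangle$, the candidate monomials $g_m$ with $1\le m\le na_1-1$ satisfy $\deg g_m\ge\deg g_l$ (split $m=sa_1+r$, use $a_2\ge a_1$ to absorb the term $sa_2$ and then Lemma \ref{ceil}); equivalently, \emph{some} optimal monomial has $b_1\ge(n-1)a_1$, and the clean bijection $f\mapsto f/x_1^{(n-1)a_1}$ between such monomials and those computing $\v(\overline I)$ then yields $\v(\overline{I^n})=(n-1)a_1+\deg f_l$. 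Note also that this requires first establishing part $(1)$ for the scaled exponents $(na_1,na_2,na_3)$, not just for $I$ itself. A smaller slip: in $(v)$ the hypothesis $\lceil\tfrac{(a_1-1)a_3}{a_1a_2}\rceil=1$ forces the $x_3$-exponent of $f_m$ to vanish only when $a_1\nmid ma_2$; the case $a_1\mid ma_2$, where that exponent is $\lceil a_3/a_2\rceil-1$, has to be treated separately (as the paper does). With these repairs your outline coincides with the paper's proof; as written, the two steps above are genuine gaps.
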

	\begin{proof}
		Without loss of generality, we assume that $x_{i_j}=x_j$ for all $1\leq j\leq 3$. Let $P=\langle x_1,x_2,x_3\rangle$. By Remark \ref{powers}, for all $n\geq 1$,  $\overline{I^n}=\overline{\langle{x_1^{na_1},x_2^{na_2},x_3^{na_3}}\rangle}$.
\\$(1)$ Consider the monomials $$\displaystyle f_{m}=x_1^{a_1-m}x_2^{\lceil\frac{ma_2}{a_1}\rceil-1}x_3^{\lceil\frac{a_3}{a_2}(\frac{ma_2}{a_1}-\lceil\frac{ma_2}{a_1}\rceil+1)\rceil-1}\in S$$ for all $1\leq m\leq a_1$. By Proposition \ref{monomialofvint}, we know that $(\overline{I}:f_m)=P$ for all $1\leq m\leq a_1$.  Hence $\v(\overline{I})\leq \min\limits_{1\leq m\leq a_1} \deg f_m$. 
\\Let $f=x_1^{b_1} x_2^{b_2} x_3^{b_3}$ be a monomial such that $(\overline{I}:f)=P$ and $\v(\overline{I})=\deg f$. We show that $\deg f\geq \deg f_{m'}$ for some $1\leq m'\leq a_1$. 
\\Since $f\notin\overline{I}$, we have $b_1=a_1-m'$ for some $1\leq m'\leq a_1$. If $\displaystyle b_2\geq \lceil\frac{m'a_2}{a_1}\rceil $ then $$\displaystyle\frac{a_1-m'}{a_1}+\frac{1}{a_2}\lceil\frac{m'a_2}{a_1}\rceil\geq 1-\frac{m'}{a_1}+\frac{m'a_2}{a_2a_1}=1.$$ By Lemma \ref{membership}, we get $f\in\overline{I}$ which is a contradiction. Hence $\displaystyle b_2=\lceil\frac{m'a_2}{a_1}\rceil-l$ for some $\displaystyle 1\leq l\leq \lceil\frac{m'a_2}{a_1}\rceil$. Since $fx_3\in\overline{I}$, again using Lemma \ref{membership}, we have $\displaystyle \frac{b_1}{a_1}+\frac{b_2}{a_2}+\frac{b_3+1}{a_3}\geq 1$. Thus
\begin{eqnarray*}
	\displaystyle\frac{b_3+1}{a_3}&\geq& 1-\frac{b_1}{a_1}-\frac{b_2}{a_2}\geq 1-\frac{a_1-m'}{a_1}-\frac{1}{a_2}(\lceil\frac{m'a_2}{a_1}\rceil-l) \\ &= & \frac{m'}{a_1}-\frac{1}{a_2}(\lceil\frac{m'a_2}{a_1}\rceil-l)= \frac{1}{a_2}(\frac{m'a_2}{a_1}-\lceil\frac{m'a_2}{a_1}\rceil+l).
\end{eqnarray*}
Therefore $b_3\geq \displaystyle\lceil\frac{a_3}{a_2}(\frac{m'a_2}{a_1}-\lceil\frac{m'a_2}{a_1}\rceil+l)\rceil-1$. Taking $L=l$, $t=1$ $A=a_3/a_2$, $\displaystyle c=\frac{m'a_2}{a_1}-\lceil\frac{m'a_2}{a_1}\rceil$ and $s=0$ in  Lemma \ref{ceil}, we get
\begin{eqnarray*}
	\deg f=b_1+b_2+b_3&\geq& a_1-m'+\lceil\frac{m'a_2}{a_1}\rceil-l+\lceil\frac{a_3}{a_2}(\frac{m'a_2}{a_1}-\lceil\frac{m'a_2}{a_1}\rceil+l)\rceil-1\\&\geq& a_1-m'+\lceil\frac{m'a_2}{a_1}\rceil-l+ l-1+\lceil\frac{a_3}{a_2}(\frac{m'a_2}{a_1}-\lceil\frac{m'a_2}{a_1}\rceil+1)\rceil-1 \\&=&a_1-m'+\lceil\frac{m'a_2}{a_1}\rceil+\lceil\frac{a_3}{a_2}(\frac{m'a_2}{a_1}-\lceil\frac{m'a_2}{a_1}\rceil+1)\rceil-2=\deg f_{m'}.
\end{eqnarray*}
Hence $ \min\limits_{1\leq m\leq a_1}\deg f_m\leq \deg f_{m'}\leq \deg f=\v(\overline{I})\leq \min\limits_{1\leq m\leq a_1}\deg f_m$.
	\\$(2)$ By Remark \ref{powers}, we have $\overline {I^n}=\overline{\langle x_{1}^{n},x_{2}^{na_2},x_{3}^{na_3}\rangle}$ for all $n\geq 1$. By Proposition \ref{monomialofvint} and part $(1)$, we get $\v(\overline{I^n})=\min\limits_{1\leq m\leq n}\deg g_m$ where $\displaystyle g_m=x_{1}^{n-m}x_{2}^{ma_2-1}x_{3}^{\lceil\frac{a_3}{a_2}\rceil-1}.$ Therefore for all $n\geq 1$, we have
		\begin{eqnarray*}
		\v(\overline{I^n})&=&\min\limits_{1\leq m\leq n}\{n-m+ma_2-1+\lceil\frac{a_3}{a_2}\rceil-1\}=\min\limits_{1\leq m\leq n}\{n+m(a_2-1)+\lceil\frac{a_3}{a_2}\rceil-2\}\\&=&n+(a_2-1)+\lceil\frac{a_3}{a_2}\rceil-2=n+a_2+\lceil\frac{a_3}{a_2}\rceil-3.
		\end{eqnarray*}	
	$(3)$ $(i)$ Let $a_2\geq 2$. By part (1), we have $\v(\overline{I^n})=\min\limits_{1\leq m\leq a_1}\deg f_m$ where $f_m$ are monomials in Proposition \ref{monomialofvint} for all $1\leq m\leq a_1$. Now we show $\deg f_{a_1}\geq \deg f_1$. Note that 
	\begin{eqnarray}\label{equfor1}
		\deg f_{a_1}-\deg f_1&=&a_2+\lceil\frac{a_3}{a_2}\rceil-2-(a_1+\lceil\frac{a_2}{a_1}\rceil+\lceil\frac{a_3}{a_2}(\frac{a_2}{a_1}-\lceil\frac{a_2}{a_1}\rceil+1)\rceil-3)\nonumber\\&=&a_2-(a_1+\lceil\frac{a_2}{a_1}\rceil-1)+ \lceil\frac{a_3}{a_2}\rceil-\lceil\frac{a_3}{a_2}(\frac{a_2}{a_1}-\lceil\frac{a_2}{a_1}\rceil+1)\rceil.
	\end{eqnarray}
	If $a_1$ divides $a_2$, i.e., $a_2=ta_1$ for some $t\in\mathbb Z_{>0}$ then by equation (\ref{equfor1}), we have 
	$$\deg f_{a_1}-\deg f_1=a_2-(a_1+\frac{a_2}{a_1}-1)\geq ta_1-a_1-t+1=(t-1)(a_1-1)\geq 0.$$
	Suppose $a_1$ does not divide $a_2$ and  $a_2=s a_1+k$ for some $s\geq 1$ and $1\leq k<a_1$. Then $\displaystyle\lceil\frac{a_2}{a_1}\rceil=s+1$ and $\displaystyle\frac{a_2}{a_1}-\lceil\frac{a_2}{a_1}\rceil+1=k/a_1$. Since $\displaystyle 0<\frac{ka_3}{a_1a_2}<\frac{a_3}{a_2}$, by equation (\ref{equfor1}), we have 
	\begin{eqnarray*}
		\deg f_{a_1}-\deg f_1&=& sa_1+k-(a_1+s)+\lceil\frac{a_3}{a_2}\rceil- \lceil \frac{ka_3}{a_1a_2}\rceil\geq sa_1+k-a_1-s\\&=& s(a_1-1)+k-a_1\geq  a_1-1+k-a_1\geq 0.
	\end{eqnarray*}
	Hence by part $(1)$, $\displaystyle\v(\overline{I})=\min\limits_{1\leq m\leq a_1-1} \deg f_m$. 
\\$(ii)$ By Remark \ref{powers}, we have $\overline {I^n}=\overline{\langle x_{1}^{na_1},x_{2}^{na_2},x_{3}^{na_3}\rangle}$ for all $n\geq 1$. By part $(3)(i)$, we have  
$\displaystyle\v(\overline I)=\min\limits_{1\leq m\leq a_1-1}\deg f_m=\deg f_l$ (where $f_m$ are monomials as mentioned in Proposition \ref{monomialofvint}).
\\Fix $n\geq 2$. For all $1\leq m\leq na_1-1,$ let $$\displaystyle g_m=x_1^{na_1-m}x_2^{\lceil\frac{ma_2}{a_1}\rceil-1}x_3^{\lceil\frac{a_3}{a_1}(\frac{ma_2}{a_1}-\lceil\frac{ma_2}{a_1}\rceil+1)\rceil-1}\in S.$$ By part $(3)(i)$, we have $\displaystyle\v(\overline {I^n})=\min\limits_{1\leq m\leq na_1-1}\deg g_m$. Note that $g_m=x_1^{(n-1)a_1}f_m$ for all $1\leq m\leq a_1-1$. Therefore $$\deg g_m=(n-1)a_1+\deg f_m\geq (n-1)a_1+\deg f_l=\deg g_l.$$
We show that $\deg g_m\geq\deg g_l$ for all $a_1\leq m\leq na_1-1$ and hence $\v(\overline{I^n})=\deg g_l$. Suppose $m\geq a_1$. Let $m=sa_1+r$ for some $s\geq 1$ and $0\leq r\le a_1-1$. 
	\\ {\bf{Case 1:}} Suppose $1\leq r\le a_1-1$ (hence $a_1$ does not divide $m$). Then 
	\begin{eqnarray*}
		\deg g_m &=& na_1-m+\lceil\frac{ma_2}{a_1}\rceil+\lceil\frac{a_3}{a_2}(\frac{ma_2}{a_1}-\lceil\frac{ma_2}{a_1}\rceil+1)\rceil-2 \\ &=& na_1-m+\lceil sa_2+\frac{ra_2}{a_1}\rceil+\lceil\frac{a_3}{a_2}(sa_2+\frac{ra_2}{a_1}-sa_2-\lceil\frac{ra_2}{a_1}\rceil+1)\rceil-2\\ &=& na_1-m+sa_2+\lceil\frac{ra_2}{a_1}\rceil+\lceil\frac{a_3}{a_2}(\frac{ra_2}{a_1}+\lceil\frac{ra_2}{a_1}\rceil+1)\rceil-2\\ &\geq& na_1-m+sa_1+\lceil\frac{ra_2}{a_1}\rceil+\lceil\frac{a_3}{a_2}(\frac{ra_2}{a_1}+\lceil\frac{ra_2}{a_1}\rceil+1)\rceil-2\\ &=& na_1-r+\lceil\frac{ra_2}{a_1}\rceil+\lceil\frac{a_3}{a_2}(\frac{ra_2}{a_1}+\lceil\frac{ra_2}{a_1}\rceil+1)\rceil-2=\deg g_r\geq \deg g_l.
\end{eqnarray*} 
	\\ {\bf{Case 2:}} Suppose $r=0$. Note that $l<a_1\leq m$. Then, since $\displaystyle 0<\frac{la_2}{a_1}-\lceil\frac{la_2}{a_1}\rceil+1\leq 1$, we have \begin{eqnarray*}
	\displaystyle	\deg g_m &=& na_1-m+\lceil\frac{ma_2}{a_1}\rceil+\lceil\frac{a_3}{a_2}(\frac{ma_2}{a_1}-\lceil\frac{ma_2}{a_1}\rceil+1)\rceil-2 \\ &=& na_1-m+\lceil\frac{ma_2}{a_1}\rceil+\lceil\frac{a_3}{a_2}\rceil-2 \\ &\geq& na_1-m+\lceil\frac{ma_2}{a_1}\rceil+\lceil\frac{a_3}{a_2}(\frac{la_2}{a_1}-\lceil\frac{la_2}{a_1}\rceil+1)\rceil-2 \\ &\geq& na_1-m+(m-l)+\lceil\frac{la_2}{a_1}\rceil+\lceil\frac{a_3}{a_2}(\frac{la_2}{a_1}-\lceil\frac{la_2}{a_1}\rceil+1)\rceil-2 = \deg g_l
	\end{eqnarray*}
	where the second last step holds by Lemma \ref{ceil} (taking $L=m$, $t=l$, $A=a_2/a_1$ and $c=s=0$). Therefore $\deg g_m\geq \deg g_l$ for all $1\leq m\leq na_1-1$ and $\displaystyle\v(\overline{I^n})=na_1-l+\lceil\frac{la_2}{a_1}\rceil+\lceil\frac{a_3}{a_2}(\frac{la_2}{a_1}-\lceil\frac{la_2}{a_1}\rceil+1)\rceil-2$ for all $n\geq 1$. 
	\\$(iii)$ We first show that $\v(\overline I)=\deg f_1$. Then the result follows from part $(3)(ii)$.
	\\Let $f_m$ denote the monomials in Proposition \ref{monomialofvint} for all $1\leq m\leq a_1-1.$ By part $(3)(ii)$, we have $\v(\overline{I})=\min\limits_{1\leq m\leq a_1-1} \deg f_m$. 
	\\Let $a_2=sa_1+k$ for some $s\geq 1$ and $0\le k\le a_1-1$. Then for all $1\leq m\leq a_1-1$, taking $L=m$, $A=a_2/a_1$, $t=1$ and $c=s=0$ in Lemma \ref{ceil}, we get,
	\begin{eqnarray}
		\deg f_m&=&a_1-m+\lceil\frac{ma_2}{a_1}\rceil+\lceil\frac{a_3}{a_2}(\frac{ma_2}{a_1}-\lceil\frac{ma_2}{a_1}\rceil+1)\rceil-2\nonumber \\ &\geq& a_1-m +(m-1)+\lceil\frac{a_2}{a_1}\rceil+\lceil\frac{a_3}{a_2}(ms+\frac{km}{a_1}-\lceil ms+\frac{km}{a_1}\rceil+1)\rceil-2\nonumber\\ &=& a_1+\lceil\frac{a_2}{a_1}\rceil+\lceil\frac{a_3}{a_2}(\frac{km}{a_1}-\lceil\frac{km}{a_1}\rceil+1)\rceil-3 \label{eq1}. 
	\end{eqnarray}
	If $k=0$, i.e., $a_1$ divides $a_2$ then using equation (\ref{eq1}), we get, \begin{eqnarray*}
		\deg f_m&=&\displaystyle a_1-m+\lceil\frac{ma_2}{a_1}\rceil+\lceil\frac{a_3}{a_2}(\frac{ma_2}{a_1}-\lceil\frac{ma_2}{a_1}\rceil+1)\rceil-2 \geq  a_1+\lceil\frac{a_2}{a_1}\rceil+\lceil\frac{a_3}{a_2}\rceil-3\\&=&a_1+\lceil\frac{a_2}{a_1}\rceil+\lceil\frac{a_3}{a_2}(\frac{a_2}{a_1}-\lceil\frac{a_2}{a_1}\rceil+1)\rceil-3=\deg f_1.\end{eqnarray*}
	Suppose $k=1$. Since $\displaystyle \lceil\frac{k}{a_1}\rceil=\lceil\frac{m}{a_1}\rceil=1$, using equation (\ref{eq1}), we have \begin{eqnarray*}
		\deg f_m&\geq&a_1+\lceil\frac{a_2}{a_1}\rceil+\lceil\frac{a_3}{a_2}(\frac{km}{a_1}-\lceil\frac{km}{a_1}\rceil+1)\rceil-3 \\&\geq & a_1+\lceil\frac{a_2}{a_1}\rceil+\lceil\frac{a_3}{a_2}(\frac{m}{a_1}-\lceil\frac{m}{a_1}\rceil+1)\rceil-3 = a_1+\lceil\frac{a_2}{a_1}\rceil+\lceil\frac{a_3}{a_2}(\frac{m}{a_1})\rceil-3 \\&\geq & a_1+\lceil\frac{a_2}{a_1}\rceil+\lceil\frac{a_3}{a_2}(\frac{1}{a_1})\rceil-3=  a_1+\lceil\frac{a_2}{a_1}\rceil+\lceil\frac{a_3}{a_2}(\frac{k}{a_1})\rceil-3\\&= & a_1+\lceil\frac{a_2}{a_1}\rceil+\lceil\frac{a_3}{a_2}(\frac{a_2}{a_1}-\lceil\frac{a_2}{a_1}\rceil+1)\rceil-3=\deg f_1.
	\end{eqnarray*}
		$(iv)$ Since $a_2/a_1-a_2/a_2\leq a_2/a_1-a_2/a_3\leq a_2/a_1$ we get the first part of the result.
	 \\	Suppose $ \lceil a_2/a_1\rceil-1=\lceil a_2/a_1-a_2/a_3\rceil$ and let $\lceil a_2/a_1-a_2/a_3\rceil=a_2/a_1-a_2/a_3+t$  for some $0\leq t<1$. Then $$0\leq \lceil a_3/a_2(a_2/a_1-\lceil a_2/a_1\rceil+1)\rceil-1=\lceil a_3/a_2(a_2/a_3-t)\rceil-1\leq 1-1=0.$$ Hence $\displaystyle f_1=x_1^{a_1-1}x_2^{{\lceil a_2/a_1\rceil}-1}$. Now by Proposition \ref{monomialofvint}, we have $(\overline I:f_1)=P$ and by Theorem  \ref{gen} part $(1)$, we get $\displaystyle\v(\overline{I^n})\geq a_1+\lceil\frac{a_2}{a_1}\rceil-2=a_1+\lceil\frac{a_2}{a_1}-\frac{a_2}{a_3}\rceil-1$. Hence $\v(\overline I)=\deg f_1$. Therefore by part $(3)(ii)$, for all $n\geq 1$, we have $\displaystyle\v(\overline{I^n})=(n-1)a_1+\deg f_1=na_1+\lceil\frac{a_2}{a_1}\rceil-2.$
	 \\$(v)$ We first show that $\v(\overline I)=\deg f_1$. Then the result follows from part $(3)(ii)$. If $a_1$ divides $a_2$ then $\v(\overline I)=\deg f_1$ follows from part $(3)$ $(iii)$. Hence, we assume that $a_1$ does not divide $a_2$.  
	 \\Note that $\displaystyle\lceil\frac{(a_1-1)a_3}{a_1a_2}\rceil=1$ implies $\displaystyle\lceil\frac{la_3}{a_1a_2}\rceil=1$ for all $1\leq l\leq a_1-1$. 
	 Let $ma_2=s_m a_1+t_m$ for some $s_m\geq 1$ and $0\le t_m<a_1$. Now $0< t_1<a_1$ and thus $\displaystyle\frac{a_2}{a_1}-\lceil\frac{a_2}{a_1}\rceil+1=\frac{t_1}{a_1}-\lceil\frac{t_1}{a_1}\rceil+1=\frac{t_1}{a_1}$. Therefore  \begin{eqnarray*}
	 \displaystyle	\deg f_1&=&a_1+\lceil\frac{a_2}{a_1}\rceil+\lceil\frac{a_3}{a_2}(\frac{a_2}{a_1}-\lceil\frac{a_2}{a_1}\rceil+1)\rceil-3=a_1+\lceil\frac{a_2}{a_1}\rceil+\lceil\frac{t_1a_3}{a_1a_2}\rceil-3\\&=& a_1+\lceil\frac{a_2}{a_1}\rceil+1-3=a_1+\lceil\frac{a_2}{a_1}\rceil-2.
	 \end{eqnarray*}
	 For all $1\leq m\leq a_1-1$, using Lemma \ref{ceil} ( taking $L=m$, $A=a_2/a_1$, $t=1$ and $c=s=0$), we have,
	 \begin{eqnarray*}
	 	\displaystyle\deg f_m&=& a_1-m+\lceil\frac{ma_2}{a_1}\rceil+\lceil\frac{a_3}{a_2}(\frac{ma_2}{a_1}-\lceil\frac{ma_2}{a_1}\rceil+1)\rceil-2\\ &=&  a_1-m+\lceil\frac{ma_2}{a_1}\rceil+\lceil\frac{a_3}{a_2}(s_m+\frac{t_m}{a_1}-s_m-\lceil\frac{t_m}{a_1}\rceil+1)\rceil-2 \\ &\geq&  a_1-m+m-1+\lceil\frac{a_2}{a_1}\rceil+\lceil\frac{a_3}{a_2}(\frac{t_m}{a_1}-\lceil\frac{t_m}{a_1}\rceil+1)\rceil-2\\ &\geq&  a_1+\lceil\frac{a_2}{a_1}\rceil+\lceil\frac{a_3}{a_2}(\frac{t_m}{a_1}-\lceil\frac{t_m}{a_1}\rceil+1)\rceil-3. 
	 \end{eqnarray*}
	 If $t_m=0$ then $$\displaystyle\deg f_m\geq  a_1+\lceil\frac{a_2}{a_1}\rceil+\lceil\frac{a_3}{a_2}\rceil-3\geq a_1+\lceil\frac{a_2}{a_1}\rceil+\lceil\frac{a_3}{a_1a_2}\rceil-3=a_1+\lceil\frac{a_2}{a_1}\rceil-2\geq\deg f_1.$$ If $1\leq t_m\leq a_1-1$ then $\displaystyle\lceil\frac{t_m}{a_1}\rceil=1$ and $\displaystyle\lceil\frac{t_ma_3}{a_1a_2}\rceil=1$. Hence $$\displaystyle\deg f_m\geq a_1+\lceil\frac{a_2}{a_1}\rceil+\lceil\frac{t_ma_3}{a_1a_2}\rceil-3=a_1+\lceil\frac{a_2}{a_1}\rceil-2=\deg f_1.$$
	 \end{proof}
	 \begin{Example}{\em
	 	Consider the ideal $I=\langle x_{1}^{4},x_2^{7},x_3^{77}\rangle$. Then $\lceil 7/4-7/77\rceil=\lceil 7/4\rceil$ and $\v(\overline I)=\deg f_3$. For the ideal $I=\langle x_{1}^{5},x_2^{8},x_3^{100}\rangle$, we have $\lceil 8/5-8/100\rceil=\lceil 8/5\rceil$ and $\v(\overline I)=\deg f_2$.
	 }
	 	\end{Example}
In the final part of our study, we analyze the $\v$-numbers of integral closure filtartions of complete intersection ideals. As a starting point, we prove the following two foundational results. The following result provides lower bounds for local $\v$-numbers and hence a lower bound for the $\v$-number of a height two complete intersection ideal.
\begin{Proposition}{\label{ht2}}
	 Let $I=\langle u_1,u_2\rangle$ be a complete intersection ideal in $S$. Let $u_1=x_{i_1}^{\alpha_1}\cdots x_{i_r}^{\alpha_r}$ and $u_2=x_{j_1}^{\beta_1}\cdots x_{j_l}^{\beta_l}$  with $\alpha_{s},\beta_{q}\geq 1$ for all $1\le s\le r$ and $1\le q\le l$. Let $P=\langle x_{i_p},x_{j_t}\rangle \in\Ass(I)$ and  $f=\prod\limits_{s=1}^{r}x_{i_s}^{b_s}\prod\limits_{q=1}^{l}x_{j_q}^{c_q}$ be a monomial such that $(\overline{I^n}:f)=P$. Then the following hold.
	\begin{enumerate}
		\item[$(i)$] $b_p=n\alpha_p-m$ for some $1\leq m\leq n\alpha_p$ and $\displaystyle c_t=\lceil\frac{m\beta_t}{\alpha_p}\rceil-1$.
		\item[$(ii)$] $\displaystyle c_j\geq\frac{m\beta_j}{\alpha_p}$ for all $1\leq j\leq l $ and $j\neq t$.
		\item[$(iii)$] $\displaystyle b_i> n\alpha_i-\frac{m\alpha_i}{\alpha_p}  $ for all $1\leq i\leq r$ and $i\neq p$. In particular, if $\alpha_p=\max\{\alpha_j:1\leq j\leq r\}$ and $x_{i_p}\in\mathcal G(P)$ then $\displaystyle b_i\geq  n\alpha_i-m+1$ for all $i\neq p$.
	\end{enumerate}
	\end{Proposition}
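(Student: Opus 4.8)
The plan is to reduce the entire statement to a single membership criterion for $\overline{I^n}$. By Remark \ref{powers} we have $\overline{I^n}=\overline{\langle u_1^n,u_2^n\rangle}$, and since $I$ is a complete intersection the variable sets $\{x_{i_1},\dots,x_{i_r}\}$ and $\{x_{j_1},\dots,x_{j_l}\}$ are disjoint; hence the Newton polyhedron $\np(\langle u_1^n,u_2^n\rangle)$ is the sum of $\mathbb R^m_{\ge 0}$ with the segment joining the two points $n(\alpha_1,\dots,\alpha_r)$ (living on the $x_{i_s}$-coordinates) and $n(\beta_1,\dots,\beta_l)$ (living on the $x_{j_q}$-coordinates). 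Running the argument used in the proof of Lemma \ref{membership}, now with these two generators, I would first record: a monomial $\prod_s x_{i_s}^{d_s}\prod_q x_{j_q}^{e_q}$ lies in $\overline{I^n}$ if and only if
$$\min_{1\le s\le r}\frac{d_s}{n\alpha_s}+\min_{1\le q\le l}\frac{e_q}{n\beta_q}\ \ge\ 1;$$
any extra variables occurring in the monomial have $0$ in all relevant coordinates and play no role. This criterion is the only tool needed.

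Next I would translate the hypothesis $(\overline{I^n}:f)=P$ into the three facts $f\notin\overline{I^n}$, $fx_{i_p}\in\overline{I^n}$, $fx_{j_t}\in\overline{I^n}$. Writing $A=\min_s\frac{b_s}{n\alpha_s}$ and $B=\min_q\frac{c_q}{n\beta_q}$, the criterion gives $A+B<1$, whereas appending the single variable $x_{i_p}$ (resp.\ $x_{j_t}$) forces the corresponding minimum above $A$ (resp.\ above $B$). Since increasing only the $p$-th coordinate by $1$ must strictly raise $\min_s\frac{b_s}{n\alpha_s}$, that minimum must be attained uniquely, and strictly, at $s=p$; hence $A=\frac{b_p}{n\alpha_p}$, and symmetrically $B=\frac{c_t}{n\beta_t}$. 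From $A+B<1$ and $B\ge 0$ we get $b_p<n\alpha_p$, so $b_p=n\alpha_p-m$ with $1\le m\le n\alpha_p$ — the first half of $(i)$. For $c_t$: $fx_{j_t}\in\overline{I^n}$ forces $\frac{c_t+1}{n\beta_t}\ge 1-\frac{b_p}{n\alpha_p}=\frac{m}{n\alpha_p}$, while $f\notin\overline{I^n}$ forces $\frac{c_t}{n\beta_t}<\frac{m}{n\alpha_p}$; so $\frac{m\beta_t}{\alpha_p}-1\le c_t<\frac{m\beta_t}{\alpha_p}$, and integrality of $c_t$ pins it down to $c_t=\lceil m\beta_t/\alpha_p\rceil-1$.

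Parts $(ii)$ and $(iii)$ are obtained by reading off the remaining coordinates of the two memberships. From $fx_{j_t}\in\overline{I^n}$ one also gets $\min_{q\ne t}\frac{c_q}{n\beta_q}\ge 1-\frac{b_p}{n\alpha_p}=\frac{m}{n\alpha_p}$, i.e.\ $c_j\ge\frac{m\beta_j}{\alpha_p}$ for every $j\ne t$, which is $(ii)$. From $fx_{i_p}\in\overline{I^n}$ one gets $\min_{s\ne p}\frac{b_s}{n\alpha_s}\ge 1-\frac{c_t}{n\beta_t}$, and combining with $\frac{c_t}{n\beta_t}<\frac{m}{n\alpha_p}$ yields $\min_{s\ne p}\frac{b_s}{n\alpha_s}>1-\frac{m}{n\alpha_p}=\frac{n\alpha_p-m}{n\alpha_p}$, hence $b_i>n\alpha_i-\frac{m\alpha_i}{\alpha_p}$ for all $i\ne p$, which is $(iii)$. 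When in addition $\alpha_p=\max_{1\le j\le r}\alpha_j$ we have $\frac{m\alpha_i}{\alpha_p}\le m$, so $b_i>n\alpha_i-m$, and since $b_i$, $n\alpha_i$, $m$ are integers this improves to $b_i\ge n\alpha_i-m+1$ (the condition $x_{i_p}\in\mathcal G(P)$ being automatic from $P=\langle x_{i_p},x_{j_t}\rangle$).

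The one genuinely delicate step is the middle one: showing that each of $A$ and $B$ is attained \emph{strictly} and at a \emph{single} index — the one prescribed by $P$ — so that adjoining $x_{i_p}$ or $x_{j_t}$ actually moves the minimum, and then being disciplined about strict versus weak inequalities and ceilings. Once the two-vertex membership criterion is in place, everything else is elementary inequality manipulation.
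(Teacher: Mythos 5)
Your argument is correct, and it reaches the same inequalities as the paper by a genuinely different reduction. The paper first writes $I=\bigcap_{s,q}\langle x_{i_s}^{\alpha_s},x_{j_q}^{\beta_q}\rangle$, uses $I^{(n)}=I^n=\bigcap Q_{sq}^n$ for complete intersections together with the cited result of Grisalde--Mart\'inez-Bernal--Villarreal to get $\overline{I^n}=\bigcap\overline{Q_{sq}^n}$, and then applies Lemma \ref{membership} to each two-variable component (deducing, from $(\overline{I^n}:f)=P$, that $f\in\overline{Q_{sq}^n}$ for every component other than the $P$-primary one). You instead work directly with $\overline{I^n}=\overline{\langle u_1^n,u_2^n\rangle}$ and prove a two-generator membership criterion from the Newton polyhedron (a segment plus the positive orthant, using disjointness of supports): $\min_s\frac{d_s}{n\alpha_s}+\min_q\frac{e_q}{n\beta_q}\ge 1$. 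Once unpacked, your criterion is exactly equivalent to the paper's decomposition $\overline{I^n}=\bigcap_{s,q}\overline{\langle x_{i_s}^{n\alpha_s},x_{j_q}^{n\beta_q}\rangle}$, so in effect you give an elementary, self-contained proof of that fact in this special case, avoiding both the symbolic-power identity and the external theorem; the paper's route has the advantage of reusing machinery (Lemma \ref{membership} verbatim, and the intersection formula that it also needs in Proposition \ref{ICCI} and Theorem \ref{uppbndcompleteint}). Your extra observation that $fx_{i_p}\in\overline{I^n}$, $f\notin\overline{I^n}$ force the minima to be attained at the indices $p$ and $t$ is a clean way to justify $A=\frac{b_p}{n\alpha_p}$, $B=\frac{c_t}{n\beta_t}$, which the paper gets instead from working inside the $P$-primary component; the remaining ceiling and integrality manipulations coincide with the paper's. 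The only thing to flesh out in a final write-up is the proof of your min-sum criterion itself (the straightforward adaptation of the convexity argument in Lemma \ref{membership} to two generators with disjoint supports), which you sketch correctly.
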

	\begin{proof}
		Let  $I=\bigcap\limits_{i=1}^h Q_i$ be a minimal irreducible decomposition of $I$. Then 
		$I^{(n)}=I^n=\bigcap\limits_{i=1}^hQ_i^n$ for all $n\geq 1$. Therefore by \cite[Theorem 4.1]{GMBV}, ${\overline{I^n}}=\bigcap\limits_{i=1}^h{\overline{Q_i^n}}$ for all $n\geq 1$. We denote $x_{i_s}$ by $x_s$ and $x_{j_q}$ by $y_q$ for all $1\leq s\leq r$ and $1\leq q\leq l$. Thus $P=\langle x_p,y_t\rangle$ and $f=\prod\limits_{i=1}^{r}x_i^{b_i}\prod\limits_{j=1}^{l}y_j^{c_j}$. The $P$-primary component of ${\overline{I^n}}$ is $Q=\langle x_p^{\alpha_p},y_t^{\beta_t}\rangle$.
		\\Now $P=(\overline{I^n}:f)=\bigcap\limits_{i=1}^h(\overline{Q_i^n}:f)\subseteq(\overline{Q_i^n}:f)$. If $f\notin \overline{Q_i^n}$, then $P\subset (\overline{Q_i^n}:f)\subseteq \sqrt{Q_i}$ for all $Q_i$. Since $\overline{Q_i^n}$ is $\sqrt{Q_i}$-primary  and $P\nsubseteq \sqrt{Q_i}$ for all $Q_i\neq Q$, we have $f\in\overline{Q_i^n}$ for all $Q_i\neq Q$ and $(\overline{Q^n}:f)=(\overline{I^n}:f)=P$. 
		\\  $(i)$ By Remark \ref{powers}, $\overline{Q^n}=\overline{\langle{x_p^{n\alpha_p},y_t^{n\beta_t}}\rangle}$ for all $n\geq 1$. Since $f\notin \overline{Q^n}$,  $b_p=n\alpha_p-m$ for some $1\leq m\leq n\alpha_p$ and by Lemma \ref{membership}, we get $\displaystyle\frac{n\alpha_p-m}{n\alpha_p}+\frac{c_t}{n\beta_t}<1$. Hence $\displaystyle c_t<\frac{m\beta_t}{\alpha_p}$. Since $fy_{t}\in \overline{Q^n}$, again using Lemma \ref{membership}, we get  $\displaystyle\frac{n\alpha_p-m}{n\alpha_p}+\frac{c_t+1}{n\beta_t}\geq 1$. Then $\displaystyle c_t+1\geq \frac{m\beta _t}{\alpha_p}$. Now $c_t\in\mathbb{Z}_{>0}$ and $\displaystyle c_t+1\geq \frac{m\beta_t}{\alpha_r}>c_t$ implies $\displaystyle c_t+1=\lceil\frac{m\beta_t}{\alpha_p}\rceil$ and hence $\displaystyle c_t=\lceil\frac{m\beta_t}{\alpha_p}\rceil-1$.
		\\$(ii)$ Fix $j\neq t$ and let $Q'=\langle x_p^{\alpha_p},y_j^{\beta_j}\rangle$. Then $f\in \overline{Q'^n}$. By Remark \ref{powers}, $\overline{Q'^n}=\overline{\langle{x_p^{n\alpha_p},y_j^{n\beta_j}}\rangle}$ for all $n\geq 1$. Hence, by Lemma \ref{membership}, we have $\displaystyle\frac{n\alpha_p-m}{n\alpha_p}+\frac{c_j}{n\beta_j}\geq 1 $ and  $\displaystyle c_j\geq \frac{m\beta_j}{\alpha_p}$. 
		\\$(iii)$ Fix $i\neq p$ and let $Q'=\langle x_i^{\alpha_i},y_t^{\beta_t}\rangle$. Then $f\in\overline{Q'^n}$. By Remark \ref{powers}, $\overline{Q'^n}=\overline{\langle{x_i^{n\alpha_i},y_t^{n\beta_t}}\rangle}$ for all $n\geq 1$. Therefore, by Lemma \ref{membership}, we get $\displaystyle\frac{b_i}{n\alpha_i}+\frac{1}{n\beta_t}(\lceil\frac{m\beta_t}{\alpha_p}\rceil-1)\geq 1$ and $$\displaystyle\frac{b_i}{n\alpha_i}\geq 1-\frac{1}{n\beta_t}(\lceil\frac{m\beta_t}{\alpha_p}\rceil-1)>1-\frac{1}{n\beta_t}(\frac{m\beta_t}{\alpha_p})=1-\frac{m}{n\alpha_p}.$$ Thus $\displaystyle b_i>n\alpha_i-\frac{m\alpha_i}{\alpha_p}$. If $\alpha_p=\max\{\alpha_j:1\leq j\leq r\}$ then 
		$b_i> n\alpha_i-m$. Hence $b_i\geq n\alpha_i-m+1$ for all $i\neq p$.
		\end{proof}	
	\begin{Proposition}{\label{ICCI}}
		Let $I=\langle u_1,\dots,u_r\rangle$ be a complete intersection monomial ideal in $S$ with $u_i=x_{i_1}^{\alpha_{i_1}}\cdots x_{i_{t_i}}^{\alpha_{i_{t_i}}}$ with $\alpha_{i_j}\geq 1$ for all $1\le i\le r$ and $1\le j\le t_i$.  If for any $j\in\{1,\ldots,r\}$, $|\supp(u_j)|=1$ then consider $\prod\limits_{p=2}^{t_j}x_{j_p}^{\alpha_{j_p}}=1$. Consider the primary component $Q=\langle x_{1_1}^{\alpha_{1_1}},\dots,x_{r_1}^{\alpha_{r_1}}\rangle$  of $I$ with $P=\sqrt{Q}$.  Let $$\displaystyle f=x_{1_1}^{n\alpha_{1_1}-1}x_{l_1}^{\lceil\frac{\alpha_{l_1}}{\alpha_{1_1}}\rceil-1}\prod\limits_{j=2}^{t_1}x_{1_j}^{n\alpha_{1_j}}\prod\limits_{i\in A}\prod\limits_{j=2}^{t_i} x_{i_j}^{\lceil\frac{\alpha_{i_j}}{\alpha_{1_1}}\rceil}$$ where $\alpha_{l_1}=\max\{\alpha_{i_1}\mid 1\le i\le r \}$ and $A=\{i\mid 2\le i\le r\text{ and } |\supp(u_i)| \geq2\}$.
		Then $(\overline{I^n}:f)=P$.
		\end{Proposition}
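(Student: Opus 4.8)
The plan is to follow the strategy of the proofs of Theorem~\ref{ci}(2) and Proposition~\ref{ht2}: decompose $\overline{I^n}$ into the integral closures of powers of the primary components of $I$, and check the colon relation one component at a time using Lemma~\ref{membership}. Write $I=\bigcap_{i=1}^{h}Q_i$ for the irredundant decomposition into monomial ideals generated by pure powers of variables, with $Q$ the given component, say $Q=Q_1$, and $P=\sqrt{Q}=\langle x_{1_1},\dots,x_{r_1}\rangle$. Since $I$ is a complete intersection, $I^{(n)}=I^n=\bigcap_{i=1}^{h}Q_i^n$, so by \cite[Theorem~4.1]{GMBV} we have $\overline{I^n}=\bigcap_{i=1}^{h}\overline{Q_i^n}$ and hence $(\overline{I^n}:f)=\bigcap_{i=1}^{h}(\overline{Q_i^n}:f)$. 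Thus it suffices to prove (a) $(\overline{Q^n}:f)=P$ and (b) $f\in\overline{Q_i^n}$ for every $Q_i\neq Q$; these together give $(\overline{I^n}:f)=P\cap\bigcap_{Q_i\neq Q}S=P$.

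For (a), observe first that since $I$ is a complete intersection the variables occurring in $u_1,\dots,u_r$ are pairwise distinct, so among $x_{1_1},\dots,x_{r_1}$ the only ones dividing $f$ are $x_{1_1}$, with exponent $n\alpha_{1_1}-1$, and $x_{l_1}$, with exponent $\lceil\alpha_{l_1}/\alpha_{1_1}\rceil-1$ (these two coincide when $l=1$, in which case the exponent of $x_{1_1}$ is still $n\alpha_{1_1}-1$). By Remark~\ref{powers}, $\overline{Q^n}=\overline{\langle x_{1_1}^{n\alpha_{1_1}},\dots,x_{r_1}^{n\alpha_{r_1}}\rangle}$, so membership is governed by the inequality in Lemma~\ref{membership}. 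From $\lceil\alpha_{l_1}/\alpha_{1_1}\rceil-1<\alpha_{l_1}/\alpha_{1_1}$ one gets that the relevant sum of ratios for $f$ is $<1$, so $f\notin\overline{Q^n}$; as $\overline{Q^n}$ is $P$-primary this already yields $(\overline{Q^n}:f)\subseteq P$. For the reverse inclusion one checks $x_{k_1}f\in\overline{Q^n}$ for every $k$: when $k=1$, $x_{1_1}^{n\alpha_{1_1}}$ divides $x_{1_1}f$; when $k=l$, the bound $\lceil\alpha_{l_1}/\alpha_{1_1}\rceil\geq\alpha_{l_1}/\alpha_{1_1}$ shows that the $x_{l_1}$-term supplies exactly the deficit $1/(n\alpha_{1_1})$ created by the exponent $n\alpha_{1_1}-1$ on $x_{1_1}$; and when $k\neq 1,l$, the required estimate is the elementary inequality $(\lceil\alpha_{l_1}/\alpha_{1_1}\rceil-1)/\alpha_{l_1}+1/\alpha_{k_1}\geq 1/\alpha_{1_1}$, which follows from $\alpha_{k_1}\leq\alpha_{l_1}$ after writing $\alpha_{l_1}=q\alpha_{1_1}+s$ with $0\leq s<\alpha_{1_1}$.

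For (b), a component $Q_i\neq Q$ is given by a choice of variable $x_{k_{j_k}}\in\supp(u_k)$ for each $k$, with $(j_1,\dots,j_r)\neq(1,\dots,1)$, and $\overline{Q_i^n}=\overline{\langle x_{1_{j_1}}^{n\alpha_{1_{j_1}}},\dots,x_{r_{j_r}}^{n\alpha_{r_{j_r}}}\rangle}$ by Remark~\ref{powers}. If $j_1\geq 2$, then $x_{1_{j_1}}^{n\alpha_{1_{j_1}}}$ divides $f$, so $f\in Q_i^n\subseteq\overline{Q_i^n}$. If $j_1=1$, pick $k\geq 2$ with $j_k\geq 2$; then $|\supp(u_k)|\geq 2$, so $k\in A$ and $x_{k_{j_k}}$ divides $f$ with exponent $\lceil\alpha_{k_{j_k}}/\alpha_{1_1}\rceil$. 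Since $x_{1_1}$ divides $f$ with exponent $n\alpha_{1_1}-1$ and, by $\lceil\alpha_{k_{j_k}}/\alpha_{1_1}\rceil\geq\alpha_{k_{j_k}}/\alpha_{1_1}$, the $x_{k_{j_k}}$-term contributes at least $1/(n\alpha_{1_1})$ to the Lemma~\ref{membership} sum for $\overline{Q_i^n}$, that sum is $\geq 1$ and hence $f\in\overline{Q_i^n}$.

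I expect the main obstacle to be purely combinatorial bookkeeping: correctly tracking, for each primary component $Q_i$ of $I$, which variable it selects from each generator and with what exponent that variable divides the engineered monomial $f$, and then verifying the two borderline inequalities above. The conceptual point is that the correction factor $x_{l_1}^{\lceil\alpha_{l_1}/\alpha_{1_1}\rceil}$ is exactly what compensates, through Lemma~\ref{membership}, for dropping the exponent of $x_{1_1}$ from $n\alpha_{1_1}$ to $n\alpha_{1_1}-1$; the degenerate case $l=1$ (when $\alpha_{1_1}$ is already the largest of the $\alpha_{k_1}$) is where $x_{l_1}$ merges into $x_{1_1}$ and all the inequalities collapse to $\alpha_{k_1}\leq\alpha_{1_1}$.
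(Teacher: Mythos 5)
Your proposal is correct and follows essentially the same route as the paper: decompose $\overline{I^n}=\bigcap_i\overline{Q_i^n}$ via $I^{(n)}=I^n$ and \cite[Theorem 4.1]{GMBV}, show $f\in\overline{Q_i^n}$ for $Q_i\neq Q$ (your two cases $j_1\geq 2$ and $j_1=1$ are exactly the paper's Cases 1 and 2), and verify $(\overline{Q^n}:f)=P$ with Lemma \ref{membership}, using the same ceiling inequalities and the maximality of $\alpha_{l_1}$. The only differences are organizational (you argue both inclusions for the colon directly on $f$, while the paper first strips the variables outside $P$ and uses the $P$-primary/Ass argument, and you make the ``one variable per generator'' description of the components explicit), so no further comment is needed.
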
	
		\begin{proof}
			Let  $I=\bigcap\limits_{i=1}^s Q_i$ be a minimal irreducible decomposition of $I$.  Then 
	$I^{(n)}=I^n=\bigcap\limits_{i=1}^sQ_i^n$ for all $n\geq 1$ and by \cite[Theorem 4.1]{GMBV}, ${\overline{I^n}}=\bigcap\limits_{i=1}^s{\overline{Q_i^n}}$ for all $n\geq 1$. We first show that if $Q_i\neq Q$ then $f\in\overline{Q_i^n}$. 
	\\
	\textbf{Case 1}: Suppose $x_{1_1}^{\alpha_{1_1}}\notin Q_i$. Then $|\supp(u_1)|\geq 2$ and there exists $x_{1_j}$ for some $2\le j\le t_1$ such that $x_{1_j}^{\alpha_{1_j}}\in Q_i$. Since by Remark \ref{powers}, $x_{1_j}^{n\alpha_{1_j}}\in\mathcal G(\overline{Q_i^n})$ and $x_{1_j}^{n\alpha_{1_j}}\mid f$, we have $f\in\overline{Q_i^n}$. 
	\\ \textbf{Case 2}: Suppose $x_{1_1}^{\alpha_{1_1}}\in Q_i$. Since $Q_i\neq Q$, we have $A\neq\{\emptyset\}$ and there exists $x_{p_j}^{\alpha_{p_j}}\in Q_i\setminus Q$ for some $p\in A$ and $2\le j\le t_p$. By Remark \ref{powers}, $x_{1_1}^{n\alpha_{1_1}}, x_{p_j}^{n\alpha_{p_j}}\in\mathcal G( \overline{Q_i})$.
	 Note that,  $$\displaystyle\frac{n\alpha_{1_1}-1}{n\alpha_{1_1}}+\frac{1}{n\alpha_{p_j}}(\lceil\frac{\alpha_{p_j}}{\alpha_{1_1}}\rceil)\geq 1-\frac{1}{n\alpha_{1_1}}+\frac{1}{n\alpha_{1_1}}=1.$$ Therefore by Lemma \ref{membership}, $\displaystyle x_{1_1}^{n\alpha_{1_1}-1}x_{p_j}^{\lceil\frac{\alpha_{p_j}}{\alpha_{1_1}}\rceil}\in\overline{Q_i^n}$. Since $\displaystyle x_{1_1}^{n\alpha_{1_1}-1}x_{p_j}^{\lceil\frac{\alpha_{p_j}}{\alpha_{1_1}}\rceil}$ divides $f$, we have $f\in\overline{Q_i^n}$. 
	\\Now we show that $(\overline{Q^n}:f)=P$. Let $\displaystyle f_1=x_{1_1}^{n\alpha_{1_1}-1}x_{l_1}^{\lceil\frac{\alpha_{l_1}}{\alpha_{1_1}}\rceil-1}$. Since $x_{i_j}\notin P$ for all $1\leq i\leq r$ and $2\le j\le t_i$, it suffices to show that $(\overline{Q^n}:f_1)=P$. By Remark \ref{powers}, $\overline{Q^n}=\overline{\langle x_{1_1}^{n\alpha_{1_1}},\dots,x_{r_1}^{n\alpha_{r_1}}\rangle}$. Since $$\displaystyle\frac{n\alpha_{1_1}-1}{n\alpha_{1_1}}+\frac{1}{n\alpha_{l_1}}(\lceil\frac{\alpha_{l_1}}{\alpha_{1_1}}\rceil-1)<1-\frac{1}{n\alpha_{1_1}}+\frac{1}{n\alpha_{l_1}}\frac{\alpha_{l_1}}{\alpha_{1_1}}=1,$$ by Lemma \ref{membership}, we get $f_1\notin\overline{Q^n}$. Thus $\Ass(R/(\overline{Q^n}:f_1))=\Ass(R/\overline{Q^n})=\{P\}$. Since $\alpha_{l_1}\geq\alpha_{i_1}$ for all $1\le i\le r$, we have
	\begin{eqnarray*}
	\displaystyle\frac{n\alpha_{1_1}-1}{n\alpha_{1_1}}+\frac{1}{n\alpha_{l_1}}(\lceil\frac{\alpha_{l_1}}{\alpha_{1_1}}\rceil-1)+\frac{1}{n\alpha_{i_1}} &\geq&\displaystyle\frac{n\alpha_{1_1}-1}{n\alpha_{1_1}}+\frac{1}{n\alpha_{l_1}}(\frac{\alpha_{l_1}}{\alpha_{1_1}}-1)+\frac{1}{n\alpha_{i_1}}\\&\geq& 1-\frac{1}{n\alpha_{1_1}}+\frac{1}{n\alpha_{1_1}}-\frac{1}{n\alpha_{l_1}}+\frac{1}{n\alpha_{i_1}}\geq 1.	\end{eqnarray*}
	Hence, by Lemma \ref{membership}, we have $f_1x_{i_1}\in\overline{Q^n}$ for all $1\le i\le r$ and $P\subset (\overline{Q^n}:f_1)$. Therefore $(\overline{I^n}:f)=(\overline{Q^n}:f)\bigcap (\bigcap\limits_{Q_i\neq Q}(\overline{Q_i^n}:f))=P$. 
		\end{proof}	
	We need the following technical lemma to prove the next result.
\begin{Lemma}\label{b>bmoda}
	Let $a,b\in\mathbb{Z}_>0$. Then $\displaystyle b\geq\lceil\frac{b}{a}\rceil+1$ if and only if $b\geq 2$ and $a\geq 2$.
\end{Lemma}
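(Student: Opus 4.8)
The plan is to prove the two implications of the equivalence separately, each by a short manipulation of the ceiling function, with no heavy machinery needed.

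For the implication ``$b\geq 2$ and $a\geq 2$ $\Rightarrow$ $b\geq\lceil b/a\rceil+1$'', I would start from the identity $\lceil b/a\rceil=\lfloor (b-1)/a\rfloor+1$, valid for all positive integers $a,b$, which gives $\lceil b/a\rceil\leq (b-1)/a+1$. Since $a\geq 2$ and $b-1\geq 1$, we have $(b-1)/a\leq (b-1)/2$, so $\lceil b/a\rceil\leq (b-1)/2+1$ and therefore $b-\lceil b/a\rceil\geq (b-1)/2\geq 1/2>0$. As $b-\lceil b/a\rceil$ is an integer, it is at least $1$, which is exactly $b\geq\lceil b/a\rceil+1$.

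For the converse I would argue by contraposition, ruling out $a=1$ and $b=1$ in turn. If $a=1$ then $\lceil b/a\rceil=b$, so $b\geq\lceil b/a\rceil+1$ would read $b\geq b+1$, which is impossible; hence $a\geq 2$. If $b=1$ then $\lceil 1/a\rceil=1$, because $0<1/a\leq 1$ for every $a\geq 1$, so the inequality would read $1\geq 2$, again impossible; hence $b\geq 2$. Thus $b\geq\lceil b/a\rceil+1$ forces both $a\geq 2$ and $b\geq 2$.

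I do not expect a genuine obstacle here; the only place calling for a little care is the final rounding step, namely inferring $b-\lceil b/a\rceil\geq 1$ from the strict real inequality $b-\lceil b/a\rceil>0$, which in particular covers the tight boundary case $b=2$ (where $(b-1)/2=1/2$). The resulting clean ``iff'' characterization is presumably what will be used when comparing degrees of the form $b$ against degrees of the form $\lceil b/a\rceil$ arising from the monomials $f_m$ of Proposition~\ref{monomialofvint}.
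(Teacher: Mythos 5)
Your proof is correct and follows essentially the same elementary route as the paper: the necessity is obtained by ruling out $a=1$ and $b=1$ directly, and the sufficiency by a short real-number estimate ($\lceil b/a\rceil\leq (b-1)/2+1$ in your version, $b-b/a\geq b/2\geq 1$ in the paper's) combined with an integrality/rounding step. The minor difference of passing through the identity $\lceil b/a\rceil=\lfloor (b-1)/a\rfloor+1$ rather than rounding $b/a$ up against the integer $b-1$ is cosmetic.
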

\begin{proof} Suppose $\displaystyle b\geq\lceil\frac{b}{a}\rceil+1$. If $a=1$ then $b=\displaystyle\lceil\frac{b}{a}\rceil$ which is a contradiction. Thus $a\geq 2$. Since $b>0$, we have $\displaystyle\lceil\frac{b}{a}\rceil\geq 1$ and hence $b\geq 2$. Conversely, suppose $a,b\geq 2$. Since $\displaystyle b-\frac{b}{a}\geq b-\frac{b}{2}=\frac{b}{2}\geq 1$, we have $\displaystyle b\geq \lceil\frac{b}{a}\rceil+1$.
\end{proof}
With the preceding developments, we are now ready to prove the final main result of our study.
	\begin{Theorem}\label{uppbndcompleteint}
	Let $I=\langle u_1,\dots,u_r\rangle$ be a complete intersection monomial ideal in $S$ with $\deg u_1=\alpha(I)$. Then the following hold.
	\begin{itemize} 
		\item[$(1)$] 
		\begin{enumerate}
			\item[$(i)$] $\v(\overline{I^n})\leq\v(I^n)$ for all $n\geq 1$.
			\item[$(ii)$] If $u_1$ and $u_t$ are not squarefree for some $2\le t\le r$, then $\v(\overline{I^n})\le\v(I^n)-1$ for all $n\geq 1$.
			\item[$(iii)$] Let $I$ be an equigenerated complete intersection ideal with $u_i=m_i^\alpha$  where $m_i$ are squarefree monomials for all $1\leq i \leq r$. Then for all $n\geq 1$, $$\reg(S/\overline{I^n})=\v(\overline{I^n})=n\alpha(I)+(|\supp(u_1)|-1)(r-1)-1.$$
		\end{enumerate}	
		\item[$(2)$] Suppose $r=2$, $u_1=(x_{i_1}\cdots x_{i_q})^\alpha$ and $u_2=x_{j_1}^{\beta_1}\cdots x_{j_l}^{\beta_l}$ with $1\leq \alpha\leq\beta_1\leq\cdots\leq\beta_l$ and $q\leq l$. Then  for all $n\geq 1$ and $P\in \Ass(\overline{I^n})$,	\vspace{-2mm} $$\displaystyle\v(\overline{I^n})=\v_P(\overline{I^n})=n\alpha(I)+\sum\limits_{j=1}^l\lceil\frac{\beta_j}{\alpha}\rceil-2.	\vspace{-2mm}$$
	 In particular, if $\alpha=1$ then  $\v(I^n)=\v(\overline{I^n})$ for all $n\geq 1$.
		\vspace{2mm}
		\item[$(3)$] Suppose $r=2$, $u_1=x_{i_1}^\alpha$, $u_2=x_{j_1}^{\beta_1}\cdots x_{j_l}^{\beta_l}$ with $l\geq 2$ and $\deg u_2=\alpha$. Then $\v(\overline{I^n})\geq n\alpha(I)$ for all $n\geq 1$. 
		\\Moreover, if there exists $j$ with $2\beta_j\geq \alpha$ then $\v(\overline{I^n})=n\alpha(I)$ for all $n\geq 1$.\vspace{2mm}
		\item[$(4)$] For any integer $a\geq 1$, there exists a height two equigenerated complete intersection monomial ideal $I$  such that $\reg(S/\overline{I^n})-\v(\overline{I^n})=a-1$ for all $n\geq 1$.
	\end{itemize}
\end{Theorem}
\begin{proof}
	$(1)$ $(i)$ By Proposition \ref{ICCI}, we get, $\v(\overline{I^n})\le\v_P(\overline{I^n})\le \deg f$ where $f$ is the monomial in Proposition \ref{ICCI} and $P\in\Ass(\overline{I^n})$. Thus, by Theorem \ref{ci},  for all $n\geq 1$, we have \vspace{-2mm} \begin{eqnarray*}
	\displaystyle \v(I^n)-\deg f&=&n\alpha(I)+\sum\limits_{i=2}^r \deg u_i-r -(n\alpha(I)-1+\lceil\frac{\alpha_{l_1}}{\alpha_{1_1}}\rceil-1+\sum\limits_{i\in A}\sum\limits_{j=2}^{t_i}\lceil\frac{\alpha_{i_j}}{\alpha_{1_1}}\rceil)\\ &=& \sum\limits_{i=2}^r\sum\limits_{j=1}^{t_i}\alpha_{i_j}-r+1-\sum\limits_{i\in A}\sum\limits_{j=2}^{t_i}\lceil\frac{\alpha_{i_j}}{\alpha_{1_1}}\rceil-\lceil\frac{\alpha_{l_1}}{\alpha_{1_1}}\rceil+1\\ &=&\sum\limits_{i=2}^r\alpha_{i_1}+\sum\limits_{i\in A}\sum\limits_{j=2}^{t_i}(\alpha_{i_j}-\lceil\frac{\alpha_{i_j}}{\alpha_{1_1}}\rceil)-(r-1)-\lceil\frac{\alpha_{l_1}}{\alpha_{1_1}}\rceil+1\\ &\geq& \sum\limits_{i=2}^r \alpha_{i_1}-(r-1)-\lceil\frac{\alpha_{l_1}}{\alpha_{1_1}}\rceil+1. 
\end{eqnarray*}
If $\alpha_{l_1}=\alpha_{1_1}$, then $1-\displaystyle\lceil\frac{\alpha_{l_1}}{\alpha_{1_1}}\rceil=0$. Since $\sum\limits_{i=2}^r\alpha_{i_1}-(r-1)\geq 0$, we have $\v(I^n)\geq \deg f$. If $\alpha_{l_1}>\alpha_{1_1}$, then 
$\displaystyle\v(I^n)-\deg f\geq \sum\limits_{\substack{i=2\\ i\ne l}}^{r}\alpha_{i_1}-(r-2)+\alpha_{l_1}-\lceil\frac{\alpha_{l_1}}{\alpha_{1_1}}\rceil\geq 0.$
Thus $\v(\overline{I^n})\le\v(I^n)$ for all $n\geq 1$. 
\\$(ii)$ Suppose $u_1$ and $u_t$ are not squarefree for some $2\le t\le r$. Without loss of generality, we assume that $\alpha_{1_1}\geq 2$ and $\alpha_{t_1}\geq 2$. 
Therefore, for all $n\geq 1$, we have $$\v({I^n})-\v(\overline{I^n})\geq \v({I^n})-\deg f\geq \displaystyle\sum\limits_{i=2}^{r}\alpha_{i_1}-(r-2)-\lceil\frac{\alpha_{l_1}}{\alpha_{1_1}}\rceil.$$
  If $\alpha_{l_1}>\alpha_{1_1}\geq 2$, then by Lemma \ref{b>bmoda}, $\displaystyle\alpha_{l_1}-\lceil\frac{\alpha_{l_1}}{\alpha_{1_1}}\rceil\geq 1$. Thus, for all $n\geq 1$, we have $$\v(I^n)-\deg f\geq \sum\limits_{i=2}^r \alpha_{i_1}-(r-2)-\lceil\frac{\alpha_{l_1}}{\alpha_{1_1}}\rceil=\sum\limits_{\substack{i=2\\ i\ne l}}^{r}\alpha_{i_1}-(r-2)+\alpha_{l_1}-\lceil\frac{\alpha_{l_1}}{\alpha_{1_1}}\rceil\geq1.$$ 
Suppose $\alpha_{l_1}=\alpha_{1_1}\geq 2$. Then for all $n\geq 1$, we have  \vspace{-2mm}
\begin{eqnarray*}\v(I^n)-\deg f\geq \sum\limits_{i=2}^r \alpha_{i_1}-(r-2)-\lceil\frac{\alpha_{l_1}}{\alpha_{1_1}}\rceil&=&\sum\limits_{\substack{i=2\\ i\ne t}}^{r}\alpha_{i_1}-(r-2)+\alpha_{t_1}-\lceil\frac{\alpha_{l_1}}{\alpha_{1_1}}\rceil \\ &\geq& (r-2)-(r-2)+2-1=1.\end{eqnarray*} 
\\$(iii)$ Consider the squarefree monomial ideal $J=\langle m_1,\ldots,m_r\rangle$. Note that $I$ is a complete intersection ideal implies $J$ is a  complete intersection ideal. Then by \cite[Theorem 1.4.6]{HH} and Remark \ref{powers}, for all $n\geq 1$, we have $$J^{n\alpha}=\overline{J^{n\alpha}}=\overline{\langle m_1,\ldots,m_r\rangle^{n\alpha}}=\overline{\langle m_1^{n\alpha},\ldots,m_r^{n\alpha}\rangle}=\overline{\langle m_1^\alpha,\ldots,m_r^\alpha\rangle^{n}}=\overline{I^n}.$$ Since the monomials $m_i$ are squarefree and $I$ is equigenerated, we have $\deg m_i=\deg m_j$ for all $1\leq i,j\leq r$. This implies $|\supp(u_i)|=|\supp(m_i)|=\deg m_i=\deg m_j=|\supp(m_j)|=|\supp(u_j)|$ for all $1\leq i,j\leq r$. Hence, by \cite[Lemma 4.4]{BHT}, for all $n\geq 1$, we have $$\reg(J^{n\alpha})=n\alpha\deg m_1+(\deg m_1-1)(r-1)=n\alpha(I)+(|\supp(u_1)|-1)(r-1).$$ Now using Theorem \ref{ci}, for all $n\geq 1$, we have 
\begin{eqnarray*}\v(\overline{I^n})&=&\v(J^{n\alpha})=n\alpha\deg m_1+(r-1)\deg m_1-r\\&=&n\alpha(I)+(|\supp(u_1)|-1)(r-1)-1=\reg(S/\overline{I^n}).\end{eqnarray*}
$(2)$ We denote $x_{i_p}$ by $x_p$ and $x_{j_t}$ by $y_t$ for all $1\leq p\leq q$ and $1\leq t\leq l$. Note that  $I=\bigcap\limits_{\substack{1\leq i\leq q\\1\leq j\leq l}} Q_{ij}$ is a minimal irreducible decomposition of $I$ where $Q_{ij}=\langle x_i^\alpha,y_j^{\beta_j}\rangle$. By \cite[Theorem 4.1]{GMBV}, ${\overline{I^n}}=\bigcap\limits_{\substack{1\leq i\leq q\\1\leq j\leq l}}{\overline{Q_{ij}^n}}$ for all $n\geq 1$ and and $\Ass({\overline{I^n}})=\{P_{ij}=\sqrt{Q_{ij}}: 1\leq i\leq q, 1\leq j\leq l\}$ for all $n\geq 1$.  By Remark \ref{powers}, $\overline{Q_{ij}^n}=\overline{\langle x_i^{n\alpha},y_j^{n\beta_j}\rangle}$. We show that $\displaystyle\v_{P_{ij}}(\overline{I^n})=qn\alpha+\sum\limits_{h=1}^l\lceil\frac{\beta_h}{\alpha}\rceil-2$ for all $1\leq i\leq q$ and $1\leq j\leq l$. Note that by Proposition \ref{ICCI}, we have 
$\displaystyle\v_{P_{ij}}(\overline{I^n})\leq qn\alpha+\sum\limits_{h=1}^l\lceil\frac{\beta_h}{\alpha}\rceil-2$.
\\Let $g=\prod\limits_{p=1}^{q}x_{p}^{b_p}\prod\limits_{h=1}^{l}y_{h}^{c_h}$ be a monomial such that $(\overline{I^n}:g)=P_{ij}$. Then by Propossition \ref{ht2}, $b_i=n\alpha-m$ for some $1\leq m\leq n\alpha$, $\displaystyle c_j=\displaystyle\lceil\frac{m\beta_j}{\alpha}\rceil-1$, $\displaystyle c_h\geq\lceil\frac{m\beta_h}{\alpha}\rceil$ for all $1\leq h\leq l $ and $h\neq j$, and $b_p\geq n\alpha-m+1$ for all $1\leq p\leq q $ and $p\neq i$. Therefore 
 \begin{eqnarray*}
 	\deg g&\geq& (n\alpha-m)+\sum\limits_{\substack{p=1\\ p\neq i}}^q (n\alpha-m+1)+\displaystyle\lceil\frac{m\beta_j}{\alpha}\rceil-1+\sum\limits_{\substack{h=1\\ h\neq j}}^l \frac{m\beta_h}{\alpha}\rceil\\&=& qn\alpha-qm+(q-1)+\sum\limits_{h=1}^l\lceil\frac{m\beta _h}{\alpha}\rceil-1\\&= &qn\alpha-qm+(q-2)+\sum\limits_{h=1}^l\lceil\frac{m\beta_h}{\alpha}\rceil\\&\geq & qn\alpha-q(m-1)-2+l(m-1)+\sum\limits_{h=1}^l\lceil\frac{\beta_h}{\alpha}\rceil\\&= &qn\alpha+\sum\limits_{h=1}^l\lceil\frac{\beta_h}{\alpha}\rceil-2+(l-q)(m-1)\geq qn\alpha+\sum\limits_{h=1}^l\lceil\frac{\beta_h}{\alpha}\rceil-2
 \end{eqnarray*} \vspace{-2mm} where the fourth inequality follows from Lemma \ref{ceil} ( taking $L=m$, $A=\beta_h/\alpha$, $t=1$ and $c=s=0$). Hence $\displaystyle\v_{P_{ij}}(\overline{I^n})=n\alpha(I)+\sum\limits_{h=1}^l\lceil\frac{\beta_h}{\alpha}\rceil-2.$
 \\The last part follows from the Theorem \ref{ci}.
\\$(3)$ We denote $x_{i_1}$ by $x$ and $x_{j_q}$ by $y_q$ for all $1\leq q\leq l$. Then by Remark \ref{powers}, $\overline{I^n}={\overline{\langle x^{n\alpha}, y_1^{n\beta_1}\cdots y_l^{n\beta_l}\rangle}}$ for all $n\geq 1$. Note that $I=\bigcap\limits_{i=1}^l Q_i$ is a minimal irreducible decomposition of $I$ where $Q_i=\langle x^\alpha,y_i^{\beta_i}\rangle$. By \cite[Theorem 4.1]{GMBV}, ${\overline{I^n}}=\bigcap\limits_{i=1}^l{\overline{Q_i^n}}$ for all $n\geq 1$ and $\Ass({\overline{I^n}})=\{P_i=\sqrt{Q_i}: 1\leq i\leq l\}$ for all $n\geq 1$. By Remark \ref{powers}, $\overline{Q_i^n}=\overline{\langle x^{n\alpha},y_i^{n\beta_i}\rangle}$ for all $n\geq 1$ and $1\leq i\leq l$. We show that $\v_{P_i}({\overline{I^n}})\geq n\alpha$ for all $n\geq 1$ and $1\leq i\leq l$. 
\\Let $f=x^c\prod\limits_{j=1}^{l}y_j^{c_j}$ and $(\overline{I^n}:f)=P_i$. Then by Proposition \ref{ht2}, we have $c=n\alpha-m$ for some $1\leq m\leq n\alpha$, $\displaystyle c_i=\lceil\frac{m\beta_i}{\alpha}\rceil-1$ and $\displaystyle c_j\geq \frac{m\beta_j}{\alpha}$  for all $j\neq i$. Hence \begin{eqnarray*}\displaystyle c+\sum\limits_{j=1}^l c_j\geq n\alpha-m+\sum\limits_{\substack{j=1\\ j\ne i}}^{l}\frac{m\beta_j}{\alpha}+\lceil\frac{m\beta_i}{\alpha}\rceil-1&=&n\alpha-m+\frac{m(\alpha-\beta_i)}{\alpha}+\lceil\frac{m\beta_i}{\alpha}\rceil-1\\&=&n\alpha-\frac{m\beta_i}{\alpha}+\lceil\frac{m\beta_i}{\alpha}\rceil-1.\end{eqnarray*} 
Now we show that $\alpha$ does not divide $m\beta_i$. Suppose $\alpha$ divides $m\beta_i$. As $fx\in \overline{I^n}\subset \overline{Q_i^n}$, using 
Lemma \ref{membership}, we have $$\displaystyle1\leq \frac{c+1}{n\alpha}+\frac{c_i}{n\beta_i}=\frac{n\alpha-m+1}{n\alpha}+\frac{\frac{m\beta_i}{\alpha}-1}{n\beta_i}=1+\frac{1}{n\alpha}-\frac{1}{n\beta_i}<1$$ which is a contradiction (where the last inequality follows as $l\geq 2$ and hence $\sum\limits_{j=1}^l\beta_j=\alpha$ implies $\beta_i<\alpha$). Therefore $\displaystyle\lceil\lceil\frac{m\beta_i}{\alpha}\rceil-\frac{m\beta_i}{\alpha}-1 \rceil=0$ and hence $$\displaystyle c+\sum\limits_{j=1}^l c_j\geq n\alpha+\lceil\lceil\frac{m\beta_i}{\alpha}\rceil-\frac{m\beta_i}{\alpha}-1 \rceil= n\alpha.$$ This implies $\v_{P_i}(\overline{I^n})\geq n\alpha$ for all $1\leq i\leq r$ and $n\geq 1$. 
\\Suppose there exists $j\in\{1,\ldots,r\}$ with $2\beta_j\geq \alpha$. Consider the monomial $\displaystyle g=xy_j^{n\beta_j-1}\prod\limits_{\substack{i=1\\i\neq j}}^{l}y_i^{n\beta_i}$. Note that $g\in \overline{Q_i^n}=\overline{\langle x^{n\alpha},y_i^{n\beta_i}\rangle}$ for all $i\neq j$. Since $\displaystyle  \frac{1}{n\alpha}+\frac{n\beta_j-1}{n\beta_j}=1-\frac{\alpha-\beta_j}{n\alpha\beta_j}<1$ (as $l\geq 2$ and hence $\alpha>\beta_j$), by Lemma \ref{membership}, we have $g\notin \overline{Q_j^n}=\overline{\langle x^{n\alpha},y_j^{n\beta_j}\rangle}$. Note that $gy_j\in \overline{Q_j^n}$. Since $\displaystyle  \frac{2}{n\alpha}+\frac{n\beta_j-1}{n\beta_j}=1+\frac{2\beta_j-\alpha}{n\alpha\beta_j}\geq 1,$ again using Lemma \ref{membership}, we have $gx\in \overline{Q_j^n}$. Hence $P_j\subseteq (\overline{Q_j^n}:g)\neq R$ and $\Ass(R/(\overline{Q_j^n}:g))=\Ass(R/\overline{Q_j^n})=\{P_j\}$. Thus $P_j= (\overline{Q_j^n}:f)$. Therefore $(\overline{I^n}:g)=(\overline{Q_j^n}:g)\bigcap (\bigcap\limits_{Q_i\neq Q_j}(\overline{Q_i^n}:g))=P_j$. Hence $\v_{P_j}(\overline{I^n})\leq \deg g=n\alpha$. Thus $\v(\overline{I^n})=n\alpha$ for all $n\geq 1$.
\\$(4)$ Let $\alpha$ be an integer such that $\alpha\geq 2a+1$. Consider the ideal $I=\langle x_1^\alpha, x_{a+2}^{\alpha-a}\prod\limits_{j=2}^{a+1}x_j \rangle$ in the polynomial ring $S=\mathbb C[x_1,\ldots,x_{a+2}]$. Then using part $(4)$, we have $\v(\overline{I^n})=n\alpha$ for all $n\geq 1$. By \cite[Example 3.2]{Hoa}, we have $\reg(S/\overline{I^n})=n\alpha+a-1$ for all $n\geq 1$ and thus we get the required result.  
\end{proof}
As a consequence, we obtain the following.
\begin{Corollary}\label{diff}
For any integer $q\geq 0$, there exist  an equigenerated complete inetersection monomial ideal $I$ and a non-equigenerated complete inetersection monomial ideal $J$ such that for all $n\geq 1$, $$\v(I^n)-\v(\overline{I^n})=q=\v(J^n)-\v(\overline{J^n})=q.$$ 
\end{Corollary}	
\begin{proof}
	Let $S=K[x_1,\ldots,x_m]$ be a polynomial ring over a field $K$ with $m\geq 2$. Consider the ideal $I=\langle x_i^{q+1},x_j^{q+1}\rangle$ with $i\neq j$. Then by Theorem \ref{ci} and part $(2)$ of Theorem \ref{gen}, we have $\v(I^n)-\v(\overline{I^n})=n(q+1)+(q+1)-2-(n(q+1)+1-2)=q$ for all $n\geq 1$.
	\\Now we construct the non-equigenerated ideal $J$. Suppose $q=0$. Then the result follows from part $(2)$ of Theorem \ref{uppbndcompleteint}. Let $q\geq 1$.
	Consider the polynomial ring $S=K[x_1,\ldots,x_q,y_1,\ldots,y_q]$ where $K$ is a field and the monomial ideal $J=\langle (x_1\cdots x_q)^2, (y_1\cdots y_q)^3\rangle\in S$. Then by Theorem \ref{ci} and part $(2)$ of Theorem \ref{uppbndcompleteint}, for all $n\geq 1$, we get $\v(J^n)-\v(\overline{J^n})=2nq+3q-2-(2nq+2q-2)=q$.
\end{proof}


\begin{thebibliography}{1000000000}
		
		\bibitem{AS} A, Vanmathi and  Sarkar, P, $\v$-Numbers of symbolic power filtrations, to appear in  Collect. Math. (2025). https://doi.org/10.1007/s13348-025-00476

\bibitem{BHT} S. Beyarslan, H. T. H\`{a} and  N. T. Trung, Regularity of powers of forests and cycles, J. Algebraic Combin. 42 (2015), no. 4, 1077-1095.
		
		\bibitem{BM} P. Biswas and M. Mandal, A study of $\v$-number for some monomial ideals. Collect. Math. (2024). https://doi.org/10.1007/s13348-024-00451-x
		
		\bibitem{BH}  W. Bruns and J. Herzog, Cohen-Macaulay rings, Cambridge University Press, Cambridge, 1993.
		
		
		\bibitem{Y} Y. Civan, The v-number and Castelnuovo-Mumford regularity of graphs, J. Algebraic Combin. 57 (2023), no. 1, 161-169.
		
		\bibitem{Co} A. Conca, A note on the $\v$-invariant, Proc. Amer. Math. Soc. 152 (2024), no. 6, 2349-2351.
		
		\bibitem{CEHH} S. M.  Cooper, R. J. D. Embree, H. T. H\`{a} and A. H.  Hoefel, Symbolic powers of monomial ideals, Proc. Edinb. Math. Soc. (2) 60 no. 1, (2017),  39-55.
		
		\bibitem{CSTPV} S. M. Cooper, A. Seceleanu, S. O. Tohaneanu, M. Vaz Pinto and R. H. Villarreal, Generalized minimum distance functions and algebraic invariants of Geramita ideals, Adv. in Appl. Math. (2020), 101940.
		
		\bibitem{DMGV} L. A. Dupont, H. Muñoz-George and R. H. Villarreal, Normality Criteria for Monomial Ideals, Results Math. 78 (2023), no. 1, Paper No. 34, 31 pp.
		
		\bibitem{FS} A. Ficarra and E. Sgroi, Asymptotic behaviour of the v-number of homogeneous ideals, arXiv:2306.14243. 
		
		\bibitem{F23} A. Ficarra, Simon conjecture and the $v$-number of monomial ideals, to appear in Collect. Math. (2024). https://doi.org/10.1007/s13348-024-00441-z
		
		\bibitem{FM} A. Ficarra and P. M. Marques, The $\v$-function of powers of sums of ideals, arXiv:2405.16882. 
		
		\bibitem{FSmarch} A. Ficarra and E. Sgroi, Asymptotic behaviour of integer programming and the v-function of a graded filtration, to appear in Journal of Algebra and Its Applications. https://doi.org/10.1142/S0219498826502361 
		
		\bibitem{HH} J. Herzog and T. Hibi, Monomial ideals, Graduate Texts in Mathematics, 260, Springer-Verlag London, Ltd., London, (2011).
		
		\bibitem{GRV} G. Grisalde, E. Reyes and R. H. Villarreal, Induced matchings and the v-number of graded ideals, Mathematics, 9 (22), 2021.
		
     \bibitem{GSV} Grisalde, G., Seceleanu, A. and Villarreal, R.H., Rees algebras of filtrations of covering polyhedra and integral closure of powers of monomial ideals, Res Math Sci 9, 13 (2022).		
		
		\bibitem{GMBV}
		G. Grisalde, J. Martínez-Bernal, and R. H. Villarreal,
		Normally torsion-free edge ideals of weighted oriented graphs, Comm. Algebra, 52 (2024), no. 4, 1672-1685.
		
		\bibitem{Hoa} L. T. Hoa, Maximal generating degrees of integral closures of powers of monomial ideals, J. Algebraic Combin. 56 (2022), no. 2, 279-304.
		
		\bibitem{HH} J. Herzog and T. Hibi, Monomial ideals, Graduate Texts in Mathematics, 260, Springer-Verlag London, Ltd., London, (2011).
		
		\bibitem{JV} D. Jaramillo and R. H. Villarreal, The v-number of edge ideals, J. Combinatorial Theory A (2021), no. 177. 
		
		\bibitem{JVS} D. Jaramillo-Velez and L. Seccia, Connected domination in graphs and v-numbers of binomial edge ideals, Collect. Math. (2023).
		
		\bibitem{KMT} T. Kataoka, Y. Muta and N. Terai, The v-numbers of Stanley-Reisner ideals from the viewpoint of Alexander dual complexes, arXiv:2504.07535.
		
		\bibitem{MRK} M. Kumar, R. Nanduri and K. Saha, The slope of v-function and Waldschmidt Constant, Volume 229, Issue 2, February 2025, 107881.
		
		\bibitem{MT} D. H. Mau and T. N. Trung, Stability of associated primes and depth of integral closures of powers of edge ideals. Preprint (2021), arXiv:2108.01830.
		
		\bibitem{RRV} L. Reid, L.G. Roberts, M.A. Vitulli, Some results on normal homogeneous ideals, Comm. Algebra 31 (9) (2003) 4485-4506.
		
		\bibitem{KSa} K. Saha, The v-Number and Castelnuovo-Mumford Regularity of Cover Ideals of Graphs, Int. Math. Res. Not. IMRN, (2024), no. 11, 9010–9019. 
		
		\bibitem{SH} I. Swanson and C. Huneke, Integral Closure of Ideals, Rings and Modules, Cambridge University Press, 2006.
		
		\bibitem {SS} K. Saha and I. Sengupta, The v-number of monomial ideals, J. Algebraic Combin. 56 (2022), no. 3, 903–927.
		
		\bibitem{MAVill} R. H. Villarreal, Monomial Algebras, Second edition, Monographs and Research Notes in Mathematics, Chapman and Hall/CRC, Boca Raton, FL, 2015.
		
		\end{thebibliography}
	\end{document}